\newcommand{\N}{\mathbb{N}}
\newcommand{\C}{\mathbb{C}}
\newcommand{\K}{\mathbb{K}}
\newcommand{\p}{\varphi}
\newcommand{\e}{\varepsilon}
\newcommand{\w}{\widetilde}
\newcommand{\oo}{\overline}
\newcommand{\om}{\omega}
\newcommand{\ind}{\boldsymbol{1}}
\newcommand{\n}[1]{\|#1\|}
\newcommand{\wsc}[1]{\overline{#1}^{w\ast}} 
\renewcommand{\span}{\mathrm{span}}
\newcommand{\ccup}{\scalebox{0.85}{$\bigcup$}}
\newcommand{\B}{\mathscr{B}}
\newcommand{\J}{\mathcal{J}}
\newcommand{\W}{\mathscr{W}}
\newcommand{\WCG}{\mathscr{W\!C\!G}}
\newtheorem{theorem}{Theorem}[section]
\newtheorem{lemma}[theorem]{Lemma}
\newtheorem{proposition}[theorem]{Proposition}
\newtheorem{corollary}[theorem]{Corollary}
\theoremstyle{definition}
\newtheorem{example}[theorem]{Example}
\newtheorem{remark}[theorem]{Remark}
\theoremstyle{remark}
\numberwithin{equation}{section}
\newcommand{\abs}[1]{\lvert#1\rvert}
\newcounter{smallromans}
\newenvironment{romanenumerate}
{\begin{list}{{\normalfont\textrm{(\roman{smallromans})}}}%
    {\usecounter{smallromans}\setlength{\itemindent}{0cm}%
      \setlength{\leftmargin}{5.5ex}\setlength{\labelwidth}{5.5ex}%
      \setlength{\topsep}{0.75\parsep}\setlength{\partopsep}{0ex}%
      \setlength{\itemsep}{0ex}}}%
  {\end{list}}
\newcounter{smallalphs}
\newenvironment{alphenumerate}
{\begin{list}{{\normalfont\textrm{(\alph{smallalphs})}}}%
    {\usecounter{smallalphs}\setlength{\itemindent}{0cm}%
      \setlength{\leftmargin}{5.5ex}\setlength{\labelwidth}{5.5ex}%
      \setlength{\topsep}{0.75\parsep}\setlength{\partopsep}{0ex}%
      \setlength{\itemsep}{0ex}}}%
  {\end{list}}
\begin{document}
\title[Weakly compactly generated operators acting on a Banach space]{The ideal of weakly compactly generated operators acting on a Banach space}
\subjclass[2010]{Primary 47L10, 46H10; Secondary 47L20, 46B26, 47B38}

\author[T. Kania]{Tomasz Kania}
\address{Department of Mathematics and Statistics, Fylde College,
  Lancaster University, Lancaster LA1 4YF, United Kingdom and Institute of Mathematics, Polish Academy of Sciences, \'{S}niadeckich~8, 00-956 Warszawa, Poland}

\email{t.kania@lancaster.ac.uk}

\author[T. Kochanek]{Tomasz Kochanek}
\address{Institute of Mathematics, Polish Academy of Sciences, \'{S}niadeckich~8, 00-956 Warszawa, Poland and Institute of Mathematics, University of Silesia, Bankowa 14, 40-007 Katowice, Poland}
\email{t\_kochanek@wp.pl}

\keywords{operator ideal, weakly compactly generated, WCG space}
\subjclass[2000]{Primary 47L10, 47L20; Secondary 46H10, 46B26}

\hyphenation{ope-ra-tor  ope-ra-tors  auto-ma-ti-cally}

\begin{abstract}
We call a bounded linear operator acting between Banach spaces \emph{weakly compactly generated} ($\mathsf{WCG}$ for short) if its range is contained in a~weakly compactly generated subspace of its target space. This notion simultaneously generalises being weakly compact and having separable range. In a comprehensive study of the class of $\mathsf{WCG}$ operators, we prove that it forms a~closed surjective operator ideal and investigate its relations to other classical operator ideals. By considering the $p$th long James space $\J_p(\om_1)$, we show how properties of the ideal of $\mathsf{WCG}$ operators (such as being the unique maximal ideal) may be used to derive results outside ideal theory. For instance, we identify the $K_0$-group of $\B(\J_p(\om_1))$ as the additive group of integers.
\end{abstract}

\maketitle

\section{Introduction}
\noindent
Amir and Lindenstrauss \cite{AL} initiated the study of \emph{weakly compactly generated} ($\mathsf{WCG}$ for short) Banach spaces, that is, Banach spaces containing a~weakly compact fundamental subset. Any reflexive and any separable Banach space is weakly compactly generated. Other notable examples include $L_1(\mu)$-spaces with a~$\sigma$-finite positive measure $\mu$, and $c_0(\Gamma)$-spaces for an~arbitrary index set $\Gamma$. The latter play a~special role, as for every $\mathsf{WCG}$ space $X$ there is a~bounded linear operator which maps $X$ injectively into $c_0(\Gamma)$ for some $\Gamma$. On the other hand, there are plenty of Banach spaces which are not $\mathsf{WCG}$ such as $\ell_\infty$ and $\ell_1(\Gamma)$ for any uncountable index set $\Gamma$.

According to Lindenstrauss \cite{LI}, the class of $\mathsf{WCG}$ Banach spaces is stable under quotients, $c_0$-sums, $\ell_p$-sums for $p\in(1,\infty)$, and countable $\ell_1$-sums. Surprisingly, a~closed subspace of a~$\mathsf{WCG}$ Banach space need not be $\mathsf{WCG}$. The first counterexample was given by Rosenthal \cite{Ro} who exhibited a~non-$\mathsf{WCG}$ subspace of $L_1(\mu)$ for some probability measure $\mu$. Note that the aforementioned spaces $\ell_\infty$ and $\ell_1(\Gamma)$ (with $\Gamma$ uncountable) are not subspaces of any $\mathsf{WCG}$ space. We refer to \cite{AM} for further examples concerning that subspace problem, and to \cite{hajek} for a~list of necessary and sufficient conditions for being a~(subspace of a) $\mathsf{WCG}$ Banach space.

Let $T\colon X\to Y$ be a bounded linear operator acting between Banach spaces $X$ and $Y$. We call the operator $T$ \emph{weakly compactly generated} (or $\mathsf{WCG})$ if there is a $\mathsf{WCG}$ subspace $Z$ of $Y$ such that $T(X)\subseteq Z$. We shall prove in Section 2 (Theorem \ref{mainwcg}) that the class $\mathscr{W\!C\!G}$ of all weakly compactly generated operators forms a closed operator ideal. Moreover, this operator ideal is a surjective, but neither it is injective nor symmetric (Propositions \ref{surjective} and \ref{non-symmetric}). We then compare $\mathscr{W\!C\!G}$ to other classical operator ideals, including operator ideals of weakly compact, completely continuous, strictly singular and strictly cosingular operators.

Section 3 is devoted to weakly compactly generated operators acting on the $p$th long James space $\J_p(\om_1)\;(p\in (1,\infty))$. The main result of this section (Theorem \ref{long_James}) asserts that the ideal of weakly compactly generated operators is the unique maximal ideal of the algebra $\mathscr{B}(\J_p(\om_1))$ of bounded operators on $\J_p(\om_1)$. Buliding on the techniques from \cite{kaniakoszmiderlaustsen}, some further descriptions of this ideal are given, and these lead to additional results concerning commutators, automatic continuity of homomorphisms and the $K_0$-group of $\B(\J_p(\om_1))$.

Next we turn our attention to operators acting on $C(K)$-spaces. In terms of the representing measure of a~given operator $T\colon C(K)\to X$, we give a~sufficient condition for $T$ being $\mathsf{WCG}$ (Theorem \ref{FMZ_theorem}). This is an~application of the~characterisation of subspaces of $\mathsf{WCG}$ Banach spaces, obtained by Fabian, Montesinos and Zizler \cite{FMZ}.

Finally, in Section 5 we discuss some examples of non-Eberlein compacta $K$ for which the ideal of $\mathsf{WCG}$ operators on $C(K)$ is maximal. In particular we show that this is the case for a~certain Mr\'owka space $K$ constructed by Koszmider \cite{Ko}, and we give a~complete description of the lattice of closed ideals in $\mathscr{B}(C(K))$ (Theorem \ref{mrowka}).

Throughout this paper, Banach spaces are assumed to be over the field $\mathbb{K}=\mathbb{R}$ or $\mathbb{K}=\mathbb{C}$, unless the field is explicitly specified. By an \emph{operator} we understand a bounded linear operator acting between Banach spaces. An operator $T\colon E\to F$ is \emph{bounded below} if there exists a constant $\gamma>0$ such that $\|Tx\|\geqslant \gamma \|x\|$ for every $x\in E$, which means that $T$ is one-to-one and has closed range. The space $\mathscr{B}(E,F)$ of all operators $T\colon E\to F$ is a Banach space, when endowed with the operator norm and $\mathscr{B}(E,E)=\mathscr{B}(E)$ is a unital Banach algebra with multiplication being composition of operators.

Let $\mathscr{B}$ be the class of all operators acting between arbitrary Banach spaces. By an \emph{operator ideal} we understand a subclass $\mathscr{J}$ of $\mathscr{B}$, containing the identity operator on the one-dimensional Banach space and which assigns to each pair $(E,F)$ of Banach spaces a (not necessarily closed) linear subspace $\mathscr{J}(E, F)= \mathscr{B}(E,F)\cap \mathscr{J}$ such that for any Banach spaces $X,Y,E,F$ and for any operators $T\in \mathscr{B}(X,E), S\in \mathscr{J}(E,F)$ and $R\in \mathscr{B}(F, Y)$ we have $RST\in \mathscr{J}(X,Y)$. An operator ideal $\mathscr{J}$ is \emph{closed}, if the subspace $\mathscr{J}(E,F)$ is closed in $\mathscr{B}(E,F)$ for any pair $(E,F)$ of Banach spaces. We refer to \cite{Pi} for the general theory of operator ideals.  

The classes $\mathscr{K}, \mathscr{W}$ and $\mathscr{X}$ of compact operators, weakly compact operators and operators with separable range are standard examples of closed operator ideals, respectively. 

For any Banach space $X$ the class $\mathscr{G}_X\subseteq \mathscr{B}$ which assigns to each pair $(E, F)$ of Banach spaces the subspace
\[\mathscr{G}_X(E,F) = \mbox{span}\{ST\colon T\in \mathscr{B}(E,X), S\in \mathscr{B}(X,F)\}\subseteq \mathscr{B}(E,F)\]
is \emph{the ideal of operators factoring through} $X$. In the case where $X$ contains a complemented copy of its Cartesian square $X\oplus X$, the set $\{ST\colon T\in \mathscr{B}(E,X), S\in \mathscr{B}(X,F)\}$ is itself a linear subspace of $\mathscr{B}(E,F)$, whence the symbol `span' above can be suppressed.

\section{The operator ideal of weakly compactly generated operators and its relations to other operator ideals}

Recall that $\mathscr{W\!C\!G}(E, F)$ denotes the set of all operators $T\colon E\to F$ with $T(E)$ contained in a weakly compactly generated subspace of $F$. 

\begin{theorem}\label{mainwcg}The class $\mathscr{W\!C\!G}$ is a closed operator ideal.\end{theorem}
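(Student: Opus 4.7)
The plan is to verify the three defining axioms of an operator ideal for $\WCG$ (containment of the identity on $\K$, linear subspace structure on each $\mathscr{B}(E,F)$, bilateral stability under composition), and then to establish norm-closedness. Containing the identity on $\K$ is immediate, since a~one-dimensional space is separable, hence $\mathsf{WCG}$.

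Linearity and the ideal property reduce to routine manipulations of weakly compact fundamental sets. For the sum: given $T_1,T_2 \in \WCG(E,F)$ with $T_i(E) \subseteq Z_i$ and weakly compact $K_i \subseteq Z_i$ satisfying $\overline{\span}\,K_i=Z_i$, the set $K_1 \cup K_2$ is weakly compact in $F$ and $\overline{\span}(K_1\cup K_2)=\overline{Z_1+Z_2}$ is a~$\mathsf{WCG}$ subspace containing $(T_1+T_2)(E)$; stability under scalar multiplication is trivial. For the ideal property: given $T\in\mathscr{B}(X,E)$, $S\in\WCG(E,F)$ with $S(E) \subseteq Z$ and fundamental $K \subseteq Z$, and $R\in\mathscr{B}(F,Y)$, weak-to-weak continuity of $R$ makes $R(K)$ weakly compact in $Y$, and $R(Z) \subseteq \overline{\span}\,R(K)$, so $RST(X) \subseteq R(Z)$ sits inside a~$\mathsf{WCG}$ subspace.

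The main obstacle is closedness. Suppose $T_n\to T$ in norm with $T_n \in \WCG(E,F)$, say $T_n(E) \subseteq Z_n$ with $K_n$ weakly compact and $\overline{\span}\,K_n=Z_n$. The plan is to amalgamate the $Z_n$'s into a~single $\mathsf{WCG}$ subspace of $F$: rescale each $K_n$ so that $\sup\{\|k\|:k\in K_n\} \leqslant 2^{-n}$ (this does not disturb its closed linear span), and set $K := \{0\}\cup\bigcup_n K_n$. By the Eberlein--\v{S}mulian theorem it suffices to establish weak sequential compactness of $K$, and a~sequence $(y_m)\subseteq K$ admits a dichotomy: either some single index $n_0$ is repeated infinitely often in $y_m\in K_{n_m}$ (then a~subsequence converges weakly within the weakly compact set $K_{n_0}\subseteq K$), or $n_m\to\infty$ along a~subsequence (then $\|y_m\|\leqslant 2^{-n_m}\to 0$, so $y_m\to 0$ weakly and $0\in K$). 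Consequently $Z:=\overline{\span}\,K=\overline{\sum_n Z_n}$ is $\mathsf{WCG}$; since $Z$ is norm-closed and contains every $T_n(E)$, pointwise norm convergence $T_nx\to Tx$ forces $T(E)\subseteq Z$.

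The delicate point is the scaling step: without the $2^{-n}$ bound the union $\bigcup_n K_n$ is generally unbounded and the second branch of the Eberlein--\v{S}mulian dichotomy breaks down. With this normalisation in place, the whole argument becomes a~clean application of the theorem, and closedness of $\WCG$ follows.
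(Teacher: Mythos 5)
Your proof is correct, and the algebraic part (sums via the union of two weakly compact generating sets, the two-sided ideal property via weak-to-weak continuity of the outer factor, the trivial check for the identity on $\K$) coincides with the paper's argument. Where you genuinely diverge is the closedness step. The paper amalgamates the subspaces $F_n\supseteq T_n(E)$ \emph{externally}: it forms the $\ell_1$-sum $W=(\bigoplus_n F_n)_{\ell_1}$, quotes Lindenstrauss's stability result that a countable $\ell_1$-sum of $\mathsf{WCG}$ spaces is again $\mathsf{WCG}$, and maps $W$ back into $F$ by the summation operator $J(x_n)_{n=1}^\infty=\sum_n x_n$, so that $\overline{J(W)}$ is a $\mathsf{WCG}$ subspace of $F$ containing $T(E)$. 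You build the amalgam \emph{internally}: rescale the weakly compact generating sets $K_n$ so that $\sup_{k\in K_n}\|k\|\leqslant 2^{-n}$, adjoin $0$, and verify weak compactness of $K=\{0\}\cup\bigcup_n K_n$ directly via the Eberlein--\v{S}mulian dichotomy; then $Z=\overline{\span}\,K$ is a norm-closed $\mathsf{WCG}$ subspace containing every $T_n(E)$, hence $T(E)$. Your route is self-contained: it in effect reproves the special case of the cited Lindenstrauss result that is actually needed (the $2^{-n}$ scaling together with the adjoined point $0$ is exactly the classical device behind that result), at the price of invoking Eberlein--\v{S}mulian, whereas the paper's route is shorter modulo the citation and needs no compactness argument of its own. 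Your remark about the scaling is apt, and one can sharpen it: even if the sets $K_n$ were uniformly bounded, the unscaled union need not be weakly closed, so it is the shrinking of the norms to $0$ together with adjoining $0$ that makes the second branch of the dichotomy close up inside $K$.
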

\begin{proof}Let $E$ and $F$ be Banach spaces. Fix two operators $T$ and $S$ in $\mathscr{W\!C\!G}(E,F)$. We deduce that $T+S$ belongs to $\mathscr{W\!C\!G}(E,F)$. Indeed, let $K_T$ and $K_S$ be two weakly compact subsets of $E$ such that $T(E)\subseteq \overline{\mbox{span}}~K_T$ and $S(E)\subseteq \overline{\mbox{span}}~K_S$. The union $K_T\cup K_S$ is weakly compact and $(T+S)(E)\subseteq  \overline{\mbox{span}}~(K_T\cup K_S)$, hence $T+S\in \mathscr{W\!C\!G}(E, F)$.

Let $X, Y, E, F$ be Banach spaces and let $T\in \mathscr{B}(X,E)$, $S\in \mathscr{W\!C\!G}(E, Y)$ and $R\in \mathscr{B}(Y, F)$. We note that both $ST$ and $RS$ are in the class $\mathscr{W\!C\!G}$. Indeed, since $S(T(X))\subseteq S(E)$ and $S(E)$ is a subspace of a $\mathsf{WCG}$ subspace of $Y$ we have $ST\in \mathscr{W\!C\!G}(X, Y)$. Now, let $K$ be a weakly compact subset of $Y$ such that $S(E)\subseteq \overline{\mbox{span}}~K$. Every operator is weak-to-weak continuous, thus the image $R(K)$ is weakly compact and $RS(E)\subseteq R(\overline{\mbox{span}}~K)\subseteq\overline{\mbox{span}}~R(K)$. Consequently, $RS\in\mathscr{W\!C\!G}(X, Y)$.

Finally, we shall prove that $\mathscr{W\!C\!G}$ is closed. Let $(T_n)_{n=1}^\infty\subseteq \mathscr{W\!C\!G}(E,F)$ be a norm-convergent sequence of operators with limit $T$, say. Let $F_n$ be a $\mathsf{WCG}$ subspace of $F$ such that $T_n(E)\subseteq F_n$ ($n\in \mathbb{N}$). Define $W$ to be the $\ell_1$-sum of $(F_n)_{n=1}^\infty$, which is again a $\mathsf{WCG}$ Banach space \cite[Proposition 2.4]{LI}. Furthermore, let $J\colon W\to F$ be the operator defined by $J(x_n)_{n=1}^\infty = \sum_{n=1}^\infty x_n$. Since $W$ is $\mathsf{WCG}$, the space $\overline{J(W)}$ is $\mathsf{WCG}$ as well, and we have $T(E)\subseteq \overline{\mbox{span}}\bigcup_{n\in\mathbb{N}}F_n\subseteq \overline{J(W)}$. \end{proof}

\begin{proposition}The ideal $\mathscr{W\!C\!G}(E)$ is a proper ideal of $\mathscr{B}(E)$ if and only if $E$ is not weakly compactly generated.\end{proposition}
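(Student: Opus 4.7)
The plan is to observe that $\mathscr{W\!C\!G}(E)$ fails to be proper precisely when it contains the identity operator $\mathrm{id}_E$, and then to unfold the definition of a $\mathsf{WCG}$ operator to see that this happens exactly when the space $E$ itself is $\mathsf{WCG}$.

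First I would invoke the general algebraic fact that in a unital algebra (here, $\mathscr{B}(E)$) a two-sided ideal is proper if and only if it does not contain the unit, since any ideal containing an invertible element equals the whole algebra. Hence
\[
\mathscr{W\!C\!G}(E)\subsetneq\mathscr{B}(E)\quad\Longleftrightarrow\quad \mathrm{id}_E\notin\mathscr{W\!C\!G}(E).
\]
This reduces the statement to showing that $\mathrm{id}_E\in\mathscr{W\!C\!G}(E)$ if and only if $E$ is $\mathsf{WCG}$.

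For the nontrivial direction, suppose $\mathrm{id}_E\in\mathscr{W\!C\!G}(E)$. By definition, there exists a $\mathsf{WCG}$ (closed) subspace $Z$ of $E$ with $\mathrm{id}_E(E)\subseteq Z$. But $\mathrm{id}_E(E)=E$, so $Z=E$ and thus $E$ itself is $\mathsf{WCG}$. Conversely, if $E$ is $\mathsf{WCG}$, then $E$ is trivially a $\mathsf{WCG}$ subspace of itself containing the range of $\mathrm{id}_E$, so $\mathrm{id}_E\in\mathscr{W\!C\!G}(E)$.

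There is no genuine obstacle here; the only point worth flagging is that although a closed subspace of a $\mathsf{WCG}$ space need not be $\mathsf{WCG}$ (which is the fundamental subtlety Rosenthal's example exposes), this pathology is irrelevant for the present argument: the containment $E\subseteq Z\subseteq E$ collapses trivially to $Z=E$, bypassing any hereditary issue.
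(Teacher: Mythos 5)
Your argument is correct and is exactly the paper's intended one-line proof ("This follows by considering the range of the identity operator on $E$"), merely written out in full: a two-sided ideal of $\mathscr{B}(E)$ is proper iff it misses $\mathrm{id}_E$, and $\mathrm{id}_E\in\mathscr{W\!C\!G}(E)$ iff $E$ itself is $\mathsf{WCG}$. Nothing to add.
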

\begin{proof}This follows by considering the range of the identity operator on $E$.\end{proof}
Let $E, E_0$ and $F$ be arbitrary Banach spaces. Recall that an operator ideal $\mathscr{J}$ is \emph{surjective} if for any surjective operator $Q\in \mathscr{B}(E, E_0)$ and each operator $T\in \mathscr{B}(E_0, F)$, we have $T\in \mathscr{J}(E_0, F)$ provided $TQ\in \mathscr{J}(E,F)$. An operator ideal $\mathscr{J}$ is \emph{injective} if for each closed subspace $F_0$ of $F$ and every operator $T\in \mathscr{B}(E,F_0)$ with $\iota T\in \mathscr{J}(E,F)$ we have $T\in \mathscr{J}(E, F_0)$ (here $\iota \colon F_0\to F$ denotes the canonical embedding).
\begin{proposition}\label{surjective}The operator ideal $\mathscr{W\!C\!G}$ is surjective, but not injective.\end{proposition}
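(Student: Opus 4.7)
The plan splits into two independent parts. For surjectivity, the argument should be essentially a one-liner: given a surjection $Q\in\B(E,E_0)$ and $T\in\B(E_0,F)$ with $TQ\in\W\!\Ca\!\mathscr{G}(E,F)$, I would simply note that surjectivity of $Q$ gives $T(E_0)=T(Q(E))=TQ(E)$, so the range of $T$ coincides with the range of $TQ$ and is therefore contained in the same $\mathsf{WCG}$ subspace of $F$ that witnesses membership of $TQ$ in the ideal. No further machinery is needed.

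For non-injectivity, the strategy is to produce a pair $(T,\iota)$ where $T$ fails to be $\mathsf{WCG}$ while $\iota T$ succeeds. The natural construction uses the phenomenon already recalled in the introduction: there exist $\mathsf{WCG}$ Banach spaces with non-$\mathsf{WCG}$ closed subspaces, e.g.\ Rosenthal's subspace of $L_1(\mu)$ for a suitable probability measure $\mu$. Taking such a subspace $F_0\subseteq F$, I would set $T=\mathrm{id}_{F_0}\colon F_0\to F_0$ and let $\iota\colon F_0\to F$ be the inclusion. Then $\iota T=\iota$ has range $F_0\subseteq F$, and since $F$ itself is a $\mathsf{WCG}$ subspace of $F$ containing $F_0$, we obtain $\iota T\in\W\!\Ca\!\mathscr{G}(F_0,F)$. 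On the other hand, by the previous proposition, $T=\mathrm{id}_{F_0}\notin\W\!\Ca\!\mathscr{G}(F_0,F_0)$ precisely because $F_0$ is not $\mathsf{WCG}$.

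Neither step is delicate; the only non-trivial input is the existence of the Rosenthal-type example, which I would just quote. The main ``obstacle'', if there is one, is purely bibliographic: pointing to an explicit reference for a non-$\mathsf{WCG}$ subspace of a $\mathsf{WCG}$ space (Rosenthal \cite{Ro}, or any of the examples collected in \cite{AM}). Everything else is tautological from the definitions and from the proposition characterising properness of $\W\!\Ca\!\mathscr{G}(E)$.
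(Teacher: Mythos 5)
Your proposal is correct and follows essentially the same route as the paper: the surjectivity argument is the same one-line observation that $T$ and $TQ$ have identical ranges, and the failure of injectivity is witnessed by exactly the Rosenthal-type example of a non-$\mathsf{WCG}$ closed subspace $F_0$ of a $\mathsf{WCG}$ space $F$, which the paper cites without spelling out the inclusion/identity construction that you make explicit.
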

\begin{proof}For the surjectivity of $\mathscr{W\!C\!G}$, suppose that $T\in \mathscr{B}(E,F)$ satisfies $TQ\in \mathscr{W\!C\!G}(E_0, E)$ for some Banach space $E_0$ and some surjection $Q\in \mathscr{B}(E_0, E)$. Since $Q$ is a surjection, the ranges of $TQ$ and $T$ are the same, hence $T\in \mathscr{W\!C\!G}(E,F)$.

We observe that $\mathscr{W\!C\!G}$ is not injective. Indeed, it follows from the existence of closed subspace of a $\mathsf{WCG}$ space which is not $\mathsf{WCG}$ \cite{Ro}.\end{proof}

\begin{proposition}\label{non-symmetric}The operator ideal $\mathscr{W\!C\!G}$ is not symmetric, that is, the adjoint of a weakly compactly generated operator need not be weakly compactly generated. 

Conversely, if the adjoint of an operator $T$ is weakly compactly generated, then again, $T$ need not be. \end{proposition}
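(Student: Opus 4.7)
\noindent\emph{Plan.} The plan is to give two independent counterexamples, one for each non-implication. For the first assertion (a $\mathscr{W\!C\!G}$-operator with non-$\mathscr{W\!C\!G}$ adjoint), I would use $T=I_{\ell_1}$, the identity on $\ell_1$. Since $\ell_1$ is separable and hence $\mathsf{WCG}$, one has $T\in\mathscr{W\!C\!G}$. Its adjoint $T^*=I_{\ell_\infty}$, however, cannot lie in $\mathscr{W\!C\!G}$, because $\ell_\infty$ is not a subspace of \emph{any} $\mathsf{WCG}$ Banach space (as recalled in the Introduction). This part is essentially immediate.

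The converse assertion is more delicate. Here I would take $T=I_X$ for a Banach space $X$ that is not $\mathsf{WCG}$ while $X^*$ is; then $T\notin\mathscr{W\!C\!G}$ (as in Proposition 2.2, since the only subspace of $X$ containing the range of $I_X$ is $X$ itself), whereas $T^*=I_{X^*}\in\mathscr{W\!C\!G}$. A classical example of such an $X$ is the Johnson--Lindenstrauss space $JL$: it is non-$\mathsf{WCG}$ (in fact it admits no fundamental biorthogonal system), yet its dual is isomorphic to $\ell_1\oplus\ell_2(\Gamma)$ for some uncountable set $\Gamma$. Since $\ell_1$ is separable and $\ell_2(\Gamma)$ is reflexive, both summands are $\mathsf{WCG}$, and the $\ell_1$-sum stability used in the proof of Theorem \ref{mainwcg} applies to conclude that $JL^*$ is $\mathsf{WCG}$.

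The main obstacle is precisely locating a non-$\mathsf{WCG}$ Banach space whose dual is $\mathsf{WCG}$: one cannot manufacture such an example by simply taking non-$\mathsf{WCG}$ subspaces of $\mathsf{WCG}$ spaces (as in Rosenthal's construction), because the only ambient spaces $X$ one has explicit control over tend to produce duals like $\ell_\infty$, $\ell_1(\Gamma)$ or $M(K)$ that are themselves non-$\mathsf{WCG}$. Appealing to the Johnson--Lindenstrauss construction is what clears this hurdle; the rest of the verification consists only of elementary facts about $\mathsf{WCG}$ already established in this section.
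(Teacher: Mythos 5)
Your proposal is correct and follows essentially the same route as the paper: the identity on $\ell_1$ (whose adjoint is $I_{\ell_\infty}$) for the first assertion, and the identity on the Johnson--Lindenstrauss space $J\!L$, with $J\!L^*\cong\ell_1\oplus\ell_2(\mathfrak{c})$ being $\mathsf{WCG}$, for the converse. The additional remarks (e.g.\ that $\ell_\infty$ embeds in no $\mathsf{WCG}$ space) are correct but not needed, since for an identity operator membership in $\mathscr{W\!C\!G}$ is equivalent to the space itself being $\mathsf{WCG}$.
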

\begin{proof}The lack of symmetry is clear -- the identity operator on $\ell_1$ is $\mathsf{WCG}$, while its adjoint $I_{\ell_1}^*=I_{\ell_\infty}$ is not.

Now, let $J\! L$ be the Johnson--Lindenstrauss space (consult \cite{JL} for its definition and properties), which is known not to be a $\mathsf{WCG}$ space but its dual $J\! L^*=\ell_1\oplus \ell_2(\mathfrak{c})$ clearly is. Take $T$ to be the identity on ${J\! L}$. Then, $T=I_{J\! L}$ is not $\mathsf{WCG}$, whereas $T^*=I_{J\! L^*}$ is.\end{proof}

Let us observe that each operator which factors through a $\mathsf{WCG}$ space is weakly compactly generated. In particular, one can deduce from this that weakly compact operators are weakly compactly generated. The Davis--Figiel--Johnson--Pe\l{}czy\'{n}ski theorem \cite[Theorem 6.2.15]{diestel_uhl} characterises weakly compact operators as precisely those which admit a factorisation through a reflexive space, hence each weakly compact operator is $\mathsf{WCG}$; of course such a heavy machinery is superfluous in this case as it can be seen directly. Trivially, operators with separable range are $\mathsf{WCG}$ as well.

In the remaining part of this section we shall study order relations between $\mathscr{W\!C\!G}$ and some classical operator ideals. Recall that an operator $T\in \mathscr{B}(E,F)$ is \begin{itemize}
\item[(a)]\emph{completely continuous} (or \emph{Dunford--Pettis}) if, it maps weakly convergent sequences in $E$ to norm convergent sequences in $F$;
\item[(b)]\emph{strictly singular} (or \emph{Kato}) if, it is not bounded below when restricted to any closed, infinite-dimensional subspace of its domain;
\item[(c)]\emph{strictly cosingular} (or \emph{Pe\l{}czy\'{n}ski}) if, for each infinite-codimensional, closed subspace $M$ of $F$, the operator $\pi T$ is not surjective, where $\pi\colon F\to F/M$ is the quotient operator.\end{itemize}

\begin{proposition}\label{slzw}${}$\begin{itemize}
\item[(i)]The operator ideal $\mathscr{W}$ is a proper subclass of $\mathscr{W\!C\!G}$.
\item[(ii)]The operator ideals $\mathscr{V}$, $\mathscr{S}$ and $\mathscr{C\!S}$ are incomparable to $\mathscr{W\!C\!G}$.\end{itemize}\end{proposition}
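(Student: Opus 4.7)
For part (i), I would first show $\mathscr{W}\subseteq\mathscr{W\!C\!G}$: if $T\in\mathscr{W}(E,F)$, then $K:=\wc{T(B_E)}$ is weakly compact in $F$, so $T(E)\subseteq \oo{\span}\,K$ sits inside a $\mathsf{WCG}$ subspace. Strictness of the inclusion is witnessed by $I_{c_0}$: the space $c_0$ is separable and hence $\mathsf{WCG}$, while $I_{c_0}$ fails to be weakly compact since $c_0$ is non-reflexive.

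For part (ii), one direction of each of the three incomparabilities is handled uniformly by $I_{\ell_2}$: it lies in $\mathscr{W\!C\!G}$ by reflexivity, yet it is not completely continuous (the standard basis is weakly null with $\|e_n\|=1$), not strictly singular (it is an isomorphism on every subspace), and not strictly cosingular (being surjective, $\pi\circ I_{\ell_2}=\pi$ is onto $\ell_2/M$ for every closed subspace $M$). This yields $\mathscr{W\!C\!G}\not\subseteq\mathscr{V},\mathscr{S},\mathscr{C\!S}$.

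The opposite direction of each incomparability requires an operator in the respective ideal whose range is not contained in any $\mathsf{WCG}$ subspace of its target. For $\mathscr{V}$ I would take $I_{\ell_1(\Gamma)}$ with $\Gamma$ uncountable; the Schur property of $\ell_1(\Gamma)$ makes this completely continuous, while the space itself is not $\mathsf{WCG}$ (as recorded in the introduction). For $\mathscr{S}$ and $\mathscr{C\!S}$ the task is subtler, and I regard this as the main obstacle: these ``smallness'' ideals tend to force ranges into $\mathsf{WCG}$ subspaces, so the target must be selected with care. My approach would be to work inside a non-$\mathsf{WCG}$ Banach space $X$ carrying an abundance of strictly singular (respectively, strictly cosingular) endomorphisms and to locate one whose range closure meets no $\mathsf{WCG}$ subspace of $X$; natural candidates are the $C(K)$-spaces with few operators of Koszmider~\cite{Ko} invoked in Section~5. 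The delicate step is verifying the non-$\mathsf{WCG}$ character of the range, for which the Fabian--Montesinos--Zizler criterion~\cite{FMZ} used in Section~4 should be the appropriate tool.
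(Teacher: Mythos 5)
Parts (i), the three non-inclusions $\WCG\not\subseteq\mathscr{V},\mathscr{S},\mathscr{C\!S}$ via $I_{\ell_2}$, and the example $I_{\ell_1(\Gamma)}$ for $\mathscr{V}\not\subseteq\WCG$ are all correct and essentially coincide with (or slightly simplify) the paper's argument. The genuine gap is the other half of (ii): you never produce a strictly singular or a strictly cosingular operator outside $\WCG$, you only announce a search strategy, and the strategy you indicate cannot succeed. On Koszmider's space $C(K)$ the operators with separable range form the unique maximal closed ideal, which by Remark \ref{mrowkaremark} coincides with $\WCG(C(K))$; since the strictly singular and the strictly cosingular endomorphisms of $C(K)$ form proper closed ideals of $\B(C(K))$, they are contained in this maximal ideal, so \emph{every} strictly singular or strictly cosingular operator on $C(K)$ has separable range and is therefore $\mathsf{WCG}$. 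Spaces with ``few operators'' are exactly the wrong place to look; moreover, restricting to endomorphisms of a single space throws away the freedom that makes the examples easy.

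The paper instead uses operators between different spaces built around $C[0,\omega_1]$, which is not contained in any $\mathsf{WCG}$ space. For $\mathscr{S}\not\subseteq\WCG$, take any surjection $T\colon\ell_1(\Gamma)\to C[0,\omega_1]$ (every Banach space is a quotient of some $\ell_1(\Gamma)$): it is not $\mathsf{WCG}$ because its range is all of the non-$\mathsf{WCG}$ space $C[0,\omega_1]$, and it is strictly singular because if $T$ were bounded below on an infinite-dimensional subspace $X$, then $T(X)$ would contain a copy of $c_0$ ($C[0,\omega_1]$ is $c_0$-saturated), which would pull back to a copy of $c_0$ inside $\ell_1(\Gamma)$ --- impossible. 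For $\mathscr{C\!S}\not\subseteq\WCG$, take the inclusion $T\colon C[0,\omega_1]\to\ell_\infty([0,\omega_1])$: it is not $\mathsf{WCG}$ since $C[0,\omega_1]$ embeds in no $\mathsf{WCG}$ space, and it is strictly cosingular by the Bourgain--Diestel criterion, because $T^\ast$ maps weak$^\ast$-null sequences to norm-null ones (weak$^\ast$-null sequences in $\ell_\infty([0,\omega_1])^\ast$ are weakly null, and $C[0,\omega_1]^\ast\cong\ell_1([0,\omega_1])$ has the Schur property). Note also that checking strict cosingularity directly from the definition, as your sketch implicitly proposes, is delicate; the dual criterion is the tool that makes this tractable. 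As written, your proposal establishes only four of the six required relations in (ii).
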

\begin{proof}(i) This is clear as explained above.

(ii) To see that $\mathscr{V}\not\subseteq \mathscr{W\!C\!G}$, recall that any operator on $\ell_1(\omega_1)$ is completely continuous, so that the identity on $\ell_1(\omega_1)$ belongs to $\mathscr{V}(\ell_1(\omega_1))\setminus \mathscr{W\!C\!G}(\ell_1(\omega_1))$. 

Conversely, to prove that $\mathscr{W\!C\!G}\not\subseteq \mathscr{V}$, by the Rosenthal--Dor $\ell_1$-theorem \cite[Chapter XI]{diestelseq} any completely continuous operator on a space without a copy of $\ell_1$ is compact. Thus, the identity operator on any infinite-dimensional $\mathsf{WCG}$ Banach space without a subspace isomorphic to $\ell_1$ belongs to $\mathscr{W\!C\!G}\setminus \mathscr{V}$.

The relations $\mathscr{W\!C\!G}\not\subseteq \mathscr{S},\,\mathscr{W\!C\!G}\not\subseteq \mathscr{C\!S}$ are clear, simply consider the identity operator on an infinite-dimensional $\mathsf{WCG}$ Banach space. 

Let $T\colon C[0,\omega_1]\to \ell_\infty([0,\omega_1])$ be the inclusion operator. Since $C[0,\omega_1]$ is not $\mathsf{WCG}$, the operator $T$ is not in $\mathscr{W\!C\!G}$. To prove that $\mathscr{W\!C\!G}\not\supseteq \mathscr{C\!S}$, it is enough to show that $T$ is strictly cosingular. In the light of a result by Bourgain and Diestel \cite{bourgain_diestel}, it suffices to ensure that $T^*$ maps weak*-null sequences into norm-null sequences. This is, however, automatic since weak*-null sequences in $\ell_\infty([0,\omega_1])^*$ converge weakly (this is the counterpart of \cite[Theorem VII.15]{diestelseq} for uncountable index sets), $T^*$ is weak-to-weak continuous like every operator and $\ell_1([0,\omega_1])\cong C[0,\omega_1]^*$ has the Schur property, which means that weakly convergent sequences in $C[0,\omega_1]^*$ converge in norm.

For the relation $\mathscr{W\!C\!G}\not\supseteq \mathscr{S}$, take a set $\Gamma$ such that there is a surjective operator $T\colon \ell_1(\Gamma)\to C[0,\omega_1]$. We have $T\notin \mathscr{W\!C\!G}$, because $C[0,\omega_1]$ is not weakly compactly generated. To see that $T$ is strictly singular, observe that if $T|_X$ was bounded below on some infinite-dimensional subspace $X$ of $\ell_1(\Gamma)$, one could find an isomorphic copy $Y$ of $c_0$ in $T(X)$ (as $C[0,\omega_1]$ is $c_0$-saturated, that is, each infinite-dimensional, closed subspace of $C[0,\omega_1]$ contains a subspace isomorphic to $c_0$; cf. also \cite{Lo}). Consequently, $(T|_X)^{-1}(Y)$ would be an isomorphic copy of $c_0$ in $\ell_1(\Gamma)$, which is impossible.\end{proof}

\section{Weakly compactly generated operators on the long James space}
In this section we prove that the ideal of weakly compactly generated operators is the unique maximal ideal of the algebra of bounded operators on the $p$th long James space $\J_p(\om_1)$. Then we derive from this fact several characterisations of this ideal. The long James space (originally for $p=2$) serves as a~counterexample to numerous questions in Banach space theory (consult Edgar's paper \cite{edgar} for details). 

Let $p\in (1,\infty)$. For any ordinal $\eta$ and any function $x\colon [0,\eta)\to\K$ define
$$\n{x}_{p,0}=\sup\Bigl\{\Bigl(\sum_{j=1}^n\abs{x(\alpha_j)-x(\alpha_{j-1})}^p\Bigr)^{1/p}\colon n\in\N\mbox{ and }0\leqslant\alpha_0<\alpha_1<\ldots <\alpha_n<\eta\Bigr\} .$$
Edgar \cite{edgar} defined the long James space to be
$$\J_p^{(0)}(\eta)=\bigl\{x\colon [0,\eta)\to\K\,\vert\,x\mbox{ is continuous}, x(0)=0\mbox{ and }\n{x}_{p,0}<\infty\bigr\}.$$
In fact, for our purposes we require a slight modification of Edgar's construction. Let $\eta$ be a non-zero limit ordinal. We set
 $$\w\J_p(\eta)=\bigl\{x\colon [0,\eta)\to\K\,\vert\,\lim_{\alpha\to\eta}x(\alpha)=0\mbox{ and }\n{x}_{p,0}<\infty\bigr\}$$
and define $\J_p(\eta)=\{x\in\w\J_p(\eta)\colon x\mbox{ is continuous}\}$. It turns out that all these three spaces are pairwise isomorphic. Indeed, the unique order preserving bijection $\p$, from $\eta$ onto the set $D(\eta)$ of all successors less than $\eta$, induces an isometry $U\colon\w\J_p(\eta)\to\J_p(\eta)$ via the formula
$$
U(x)(\alpha)=\left\{\begin{array}{ll}
x(\p^{-1}(\alpha)), & \mbox{if }\alpha\in D(\eta),\\
\lim_{\beta\to\alpha}x(\beta), & \mbox{if }\alpha\in\eta\setminus D(\eta),
\end{array}
\right.
$$
whereas the map $V\colon\J_p(\eta)\to\J_p^{(0)}(\eta)$, given by
$$
V(x)(\alpha)=\left\{\begin{array}{ll}
0, & \mbox{if }\alpha=0,\\
x(0)+x(\alpha), & \mbox{if }0<\alpha<\eta,\\
x(0), & \mbox{if }\alpha=\eta,
\end{array}
\right.
$$
yields an isomorphism between $\J_p(\eta)$ and $\J_p^{(0)}(\eta)$.

All these spaces may also be equipped with the norm 
$$\begin{array}{r}
\n{x}_{\J_p}=2^{-1/p}\sup\Bigl\{\Bigl(\abs{x(\alpha_n)-x(\alpha_0)}^p+\sum_{j=1}^n\abs{x(\alpha_j)-x(\alpha_{j-1})}^p\Bigr)^{1/p}\colon n\in\N\mbox{ and}\,\,\,\\
0\leqslant \alpha_0<\alpha_1<\ldots <\alpha_n<\eta\Bigr\},
\end{array}
$$
which is more natural than $\n{\cdot}_{p,0}$ in the sense that $\n{e_\alpha}_{\J_p}=1$ for every $\alpha<\eta$, where $e_\alpha=\ind_{\{\alpha\}}$. Moreover, for any $x\in\w\J_p(\eta)$ we have 
$$2^{-1/p}\n{x}_{p,0}\leqslant\n{x}_{\J_p}\leqslant 2^{1/p}\n{x}_{p,0}.$$

According to \cite[Propositions 1, 3]{edgar}, we know that
\begin{itemize*}
\item $(\ind_{(\alpha,\eta]})_{0\leqslant\alpha<\eta}$ is a basis for $\J_p^{(0)}(\eta)$;
\item $(e_\alpha^\ast)_{0<\alpha\leqslant\eta}$ is a basis for $\J_p^{(0)}(\eta)^\ast$,
\end{itemize*}
where $e_\alpha^\ast(f)=f(\alpha)\;(\alpha<\eta, f\in \J_p^{(0)}(\eta))$. By applying the isomorphism $V$ and the dual isomorphism $V^\ast$, we infer that
\begin{itemize*}
\item $(\ind_{[0,\alpha]})_{0\leqslant\alpha<\eta}$ is a basis for $\J_p(\eta)$;
\item $(e_\alpha^\ast)_{0\leqslant\alpha<\eta}$ is a basis for $\J_p(\eta)^\ast$.
\end{itemize*}

From now, we specialise to $\eta=\omega_1$, the smallest uncountable ordinal. Let us recall that $\J_p(\om_1)$ is isomorphic to its bidual and has the Radon--Nikod\'ym property, yet it is not isomorphic to a~subspace of a~$\mathsf{WCG}$ Banach space.

In the case of the classical James space $\J_p$ it was shown by Laustsen that $\W(\J_p)$ is the unique maximal ideal of $\B(\J_p)$ and, moreover, that an~operator on $\J_p$ is weakly compact if and only if it factors through the reflexive space $(\bigoplus_{n=1}^\infty\J_p(n))_{\ell_p}$, where $\J_p(n)=\span\{e_j\}_{j\leqslant n}$ (see \cite[Theorem 4.16]{laustsen} and \cite[Theorem 4.3]{laustsencommutator}, respectively). According to Willis (cf. \cite[Proposition 6]{williscompressible}), the ideal $\W(\J_p)$ may be also characterised as the ideal of compressible operators. Let us recall that for any Banach space $X$ an~operator $T\in\B(X)$ is said to be {\it compressible} if there is $n\in\N$, and a~sequence $(Q_k)_{k=1}^\infty$ of projections on $X^n$, such that $Q_kQ_\ell=0$ whenever $k\not=\ell$ and $T$ factors through $Q_k$ for each $k\in\N$. Equivalently (cf. \cite[Proposition 1]{williscompressible}), $T\in\B(X)$ is compressible if and only if there exist $n\in\N$ and sequences $(D_k)_{k=1}^\infty$ and $(E_k)_{k=1}^\infty$ of closed subspaces of $X^n$ such that
\begin{itemize*}
\item[(c1)] $X^n=D_1\oplus E_1$ and $E_k=D_{k+1}\oplus E_{k+1}$ for every $k\in\N$;
\item[(c2)] $T$ factors through $D_k$ for every $k\in\N$.
\end{itemize*}
We denote by $\mathscr{C}(X)$ the ideal of compressible operators on $X$.

The methods used by Laustsen and Willis to obtain characterisations of the ideal $\W(\J_p)$ are based on Lemma 2.1 from the paper \cite{loy_willis} by Loy and Willis. It asserts that every operator $T\in\W(\J_p)$ admits a~decomposition $T=K+R$, where $K$ is compact and $R$ has only finitely many non-zero entries in each line of its matrix representation, and also satisfies $R^{\ast\ast}(\ind_\N)=0$. This approach is rather useless for characterising the ideal $\WCG(\J_p(\om_1))$, since weakly compactly generated operators are not characterised by properties of their second adjoints, unlike weakly compact operators.

Our approach is based on Lemma 1.2 from \cite{kaniakoszmiderlaustsen} by the first-named author, Koszmider and Laustsen, which refines certain results of Alspach and Benyamini from \cite{AB}. We shall prove a~counterpart of that lemma for $\J_p(\om_1)$. Before proceeding to the proof let us note that, in view of the theorem of Hagler and Johnson \cite[Corollary 2]{hagler_johnson}, and the fact that $\J_p(\om_1)^\ast$ has the Radon--Nikod\'ym property, the unit ball of $\J_p(\om_1)^\ast$ is weak$^\ast$ sequentially compact. Though the general idea of the proof remains the same, some modification is needed, as the original argument heavily relies on the identification $C[0,\alpha]^\ast\cong\ell_1([0,\alpha])$ for any ordinal $\alpha$. 

\begin{theorem}\label{Koszmider_modified}
For every $p\in (1,\infty)$ and every $T\in\B(\J_p(\om_1))$ there exists a~$\lambda\in\K$ such that for some club set (a closed and unbounded set) $D\subseteq\om_1$ we have 
\begin{equation}\label{superlemma}e_\alpha^\ast T(x)=\lambda e_\alpha^\ast x\quad\mbox{for }x\in\J(\om_1)\mbox{ and }\alpha\in D.\end{equation}
\end{theorem}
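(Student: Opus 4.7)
The plan is to emulate Lemma~1.2 of \cite{kaniakoszmiderlaustsen}, with the identification $C[0,\om_1]^\ast\cong\ell_1([0,\om_1])$ replaced by the basis representation of $\J_p(\om_1)^\ast$ afforded by $(e_\alpha^\ast)_{\alpha<\om_1}$.

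Fix $T\in\B(\J_p(\om_1))$. For each $\alpha<\om_1$ the functional $T^\ast e_\alpha^\ast\in\J_p(\om_1)^\ast$ has a~unique basis expansion with countable support $\sigma(\alpha)\subseteq[0,\om_1)$. Setting $h(\alpha)=\sup(\sigma(\alpha)\cup\{\alpha\})+1<\om_1$, a~standard closing-off argument produces a~club $D_0\subseteq\om_1$ such that $\sigma(\beta)\subseteq[0,\alpha)$ whenever $\beta<\alpha\in D_0$. Informally, for $\alpha\in D_0$ every functional $T^\ast e_\beta^\ast$ with $\beta<\alpha$ depends only on coordinates strictly below~$\alpha$.

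Next I~propagate this control to the diagonal. Because every $x\in\J_p(\om_1)$ is continuous and satisfies $\lim_{\alpha\to\om_1}x(\alpha)=0$, one has $e_\beta^\ast\xrightarrow{w^\ast}e_\alpha^\ast$ as $\beta\uparrow\alpha$ for every limit ordinal $\alpha<\om_1$. Weak$^\ast$-continuity of $T^\ast$ yields $T^\ast e_\beta^\ast\xrightarrow{w^\ast}T^\ast e_\alpha^\ast$; evaluating both sides on $\ind_{[0,\gamma]}$ for $\gamma\geqslant\alpha$ and using continuity of $T\ind_{[0,\gamma]}$ at~$\alpha$, one sees that the only basis coefficient of $T^\ast e_\alpha^\ast$ allowed to lie outside $[0,\alpha)$ is the one at $\alpha$ itself. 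Consequently, for each limit $\alpha\in D_0$ we may write
\begin{equation*}
T^\ast e_\alpha^\ast=\lambda_\alpha e_\alpha^\ast+\psi_\alpha,\qquad\mbox{with }\psi_\alpha\mbox{ supported in }[0,\alpha)\mbox{ and }|\lambda_\alpha|\leqslant\n{T}.
\end{equation*}

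The step I~expect to be the main obstacle is simultaneously pinning down a~common value $\lambda$ for $\lambda_\alpha$ and forcing $\psi_\alpha=0$ on a~further club $D\subseteq D_0$. Here I~would exploit the weak$^\ast$-sequential compactness of the unit ball of $\J_p(\om_1)^\ast$ -- precisely the reason for invoking Hagler--Johnson together with the RNP of $\J_p(\om_1)^\ast$ in the paragraph immediately preceding the statement. A~naive weak$^\ast$-limit at $\om_1$ is too coarse (one merely gets $e_{\alpha_n}^\ast\xrightarrow{w^\ast}0$), so the stabilisation must be carried out sequence by sequence: given $\alpha_n\uparrow\om_1$ along which $\lambda_{\alpha_n}\to\lambda$ and $\psi_{\alpha_n}\xrightarrow{w^\ast}\chi$, one probes $T^\ast e_{\alpha_n}^\ast$ against test vectors assembled from long increasing chains of successors, and leverages the quantitative interplay between $\n{\cdot}_{p,0}$ and $\n{\cdot}_{\J_p}$ to show that any non-zero residual or any oscillation in $\lambda_\alpha$ would contradict the boundedness of $T$. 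A~final closing-off then fuses the set on which $\lambda_\alpha=\lambda$ with the set on which $\psi_\alpha=0$ into the club $D$, and \eqref{superlemma} is precisely the pointwise re-reading of $T^\ast e_\alpha^\ast=\lambda e_\alpha^\ast$ for $\alpha\in D$.
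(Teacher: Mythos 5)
Your reduction to the decomposition $T^\ast e_\alpha^\ast=\lambda_\alpha e_\alpha^\ast+\psi_\alpha$ (with $\psi_\alpha$ supported in $[0,\alpha)$, on a club $D_0$ obtained by closing off supports) is a sensible start and is broadly compatible with what the paper does, but the step you yourself flag as ``the main obstacle'' is exactly the content of the theorem, and your sketch of it does not work as stated. The assertion that ``any non-zero residual or any oscillation in $\lambda_\alpha$ would contradict the boundedness of $T$'' is unsubstantiated: boundedness alone gives no contradiction, because if you try to turn a stationary set of indices with $\n{\psi_\alpha}\geqslant\delta$ into a bad vector by summing skipped witnesses $x_{\alpha_i}$ (via the $\ell_p$-type estimates), the uncontrolled cross terms $\psi_{\alpha_j}(x_{\alpha_i})$, $i<j$, destroy the estimate, since the $\psi_\alpha$ for different $\alpha$ have unrelated countable supports. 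Likewise, weak$^\ast$ sequential compactness applied to a single sequence $\alpha_n\uparrow\om_1$ only controls countably many indices; it cannot produce a club on which $\psi_\alpha=0$ or $\lambda_\alpha=\lambda$, and your ``final closing-off'' presupposes that the sets $\{\alpha\colon\psi_\alpha=0\}$ and $\{\alpha\colon\lambda_\alpha=\lambda\}$ already contain clubs, which is precisely what has to be proved (note also that the coefficient functionals of the basis $(e_\gamma^\ast)$ are not weak$^\ast$ continuous, so you cannot even read off $\lambda_{\alpha_n}\to\lambda_\alpha$ from $T^\ast e_{\alpha_n}^\ast\xrightarrow{w\ast}T^\ast e_\alpha^\ast$).

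What is missing is a combinatorial uniformization across uncountably many $\alpha$, and this is where the paper's proof differs essentially from your plan. There one replaces $T^\ast e_\alpha^\ast$ by finitely supported approximants with coefficients from a fixed countable dense set, applies the $\Delta$-system lemma for each accuracy level $k$, and further uniformizes cardinalities, order types and the (rational) coefficients. This forces the ``low'' part of the approximants to be a single functional $\rho_k$ supported in the fixed root $\Delta_k$, independent of $\alpha\in A_k$, and the ``new'' part to have a fixed coefficient pattern whose sum converges to the eventual $\lambda$ (extracted by testing against an indicator placed above all roots, using weak$^\ast$ sequential compactness of the dual ball). The club $D$ is then built from sequences $\alpha_i\in A_{k_i}$ whose off-root supports squeeze up to $\alpha$, so that the off-root part collapses onto $\lambda e_\alpha^\ast$, giving $T^\ast e_\alpha^\ast=\rho+\lambda e_\alpha^\ast$ with the \emph{same} $\rho$ for every $\alpha\in D$; only because $\rho$ is independent of $\alpha$ does the final step $T^\ast e_\alpha^\ast\xrightarrow{w\ast}0$ (as $\alpha\to\om_1$ along $D$) kill the residual. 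In your scheme $\psi_\alpha$ varies with $\alpha$, so this limiting argument is unavailable, and no substitute mechanism is provided; without a $\Delta$-system (or comparable pressing-down/uniformization) ingredient the proof does not go through.
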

\begin{proof}
For each $\alpha<\omega_1$ define $\p_\alpha=T^\ast e_\alpha^\ast$. We shall prove that for some $\lambda\in\K$ there is a~club subset $D\subset\omega_1$ with $\p_\alpha=\lambda e_\alpha^\ast$, since then $\langle Tx,e_\alpha^\ast \rangle =\langle x,T^\ast e_\alpha^\ast \rangle=\lambda e_\alpha^\ast x$ for $x\in X$, $\alpha\in D$, and the assertion would follow.

Let $\mathcal{Q}$ be any countable, dense subset of $\K$. For every $\alpha<\omega_1$ and $k\in\N$ we may find a~finite set $F_{\alpha,k}\subset\omega_1$ and scalars $q_{\alpha,k,\beta}\in\mathcal{Q}$ (for $\beta\in F_{\alpha,k}$) such that
\begin{equation}\label{ppp}
\n{\p_{\alpha,k}-\p_\alpha}<1/k\, ,\quad\mbox{where}\quad\p_{\alpha,k}=\sum_{\beta\in F_{\alpha,k}}q_{\alpha,k,\beta}e_\beta^\ast .
\end{equation}

\vspace*{2mm}\noindent
{\sc Claim 1. }If $(\alpha_k)_{k=1}^\infty\subset\omega_1$ and $\alpha_k\to\alpha\in\omega_1$, then $\p_{\alpha_k,k}\xrightarrow[]{w\ast}\p_\alpha$.

\vspace*{2mm}\noindent
Proof of Claim 1. For any $x\in\J_p(\om_1)$ we have $$\abs{(\p_{\alpha_k,k}-\p_\alpha)(x)}\leqslant\n{\p_{\alpha_k,k}-\p_{\alpha_k}}\cdot\n{x}_{\J_p}+\abs{(\p_{\alpha_k}-\p_\alpha)(x)}.$$The first term tends to zero by \eqref{ppp}. By the continuity of each $x\in\J_p(\om_1)$, we have $e_{\alpha_k}^\ast\xrightarrow[]{w\ast}e_\alpha^\ast$, thus the weak$^\ast$-continuity of $T^\ast$ gives $\p_{\alpha_k}=T^\ast e_{\alpha_k}^\ast\xrightarrow[]{w\ast}T^\ast e_\alpha^\ast=\p_\alpha$, which means that the second term tends to zero as well.

\vspace*{2mm}
For each $k\in\N$ the $\Delta$-system lemma produces an uncountable set $A_k\subset\omega_1$ and a~root $\Delta_k=\{\beta_{k,1},\ldots ,\beta_{k,\abs{\Delta_k}}\}\subset\omega_1$ such that
\begin{equation}\label{lambda_1}
F_{\alpha,k}\cap F_{\alpha^\prime,k}=\Delta_k\quad\mbox{for }\alpha, \alpha^\prime\in A_k, \alpha\not=\alpha^\prime .
\end{equation}
By deleting at most countably many elements from each of $A_k$'s we may also assume that:
\begin{itemize*}
\item[(a)] for every $k\in\N$ there is an $m_k\in\N$ with $\abs{F_{\alpha,k}}=m_k$ for each $\alpha\in A_k$;
\item[(b)] $\sup\ccup_{i\in\N}\Delta_i<\min(F_{\alpha,k}\setminus\Delta_k)$ for every $k\in\N$ and $\alpha\in A_k$;
\item[(c)] for every $k\in\N$ and $\alpha\in A_k$ there is an~order preserving bijection $\sigma_{\alpha,k}\colon [m_k]\to F_{\alpha,k}$ such that $\sigma_{\alpha,k}(i)=\beta_{k,i}$ for each $1\leqslant i\leqslant\abs{\Delta_k}$;
\item[(d)] for every $k\in\N$ there are scalars $q_{k,i}\in\mathcal{Q}$ (for $1\leqslant i\leqslant m_k$) such that $q_{\alpha,k,\sigma_{\alpha,k}(i)}=q_{k,i}$ for $\alpha\in A_k$ and $1\leqslant i\leqslant m_k$.
\end{itemize*}

\noindent
{\it Case 1. }$m_k=\abs{\Delta_k}$ for infinitely many $k$'s. 

Without loss of generality, we may suppose that $m_k=\abs{\Delta_k}$ for every $k\in\N$. Then for every $k\in\N$ and $\alpha\in A_k$ we have $F_{\alpha,k}=\Delta_k$ and $$\p_{\alpha,k}=\rho_k:=\sum_{1\leqslant i\leqslant\abs{\Delta_k}}q_{k,i}e_{\beta_{k,i}}^\ast .$$

We define the club $D\subset\omega_1$ as the set of all limits of sequences $(\alpha_k)_{k=1}^\infty$ with $\alpha_k\in A_k$ for $k\in\N$. Now, if $\alpha\in D$ is the limit of such a~sequence, then Claim 1 implies $\rho_k=\p_{\alpha_k,k}\overset{w\ast}{\longrightarrow}\p_\alpha$. Consequently, for all $\alpha\in D$ the functional $\p_\alpha$ is the same, and is equal to the weak$^\ast$ limit of $(\rho_k)_{k\in\N}$. Moreover, since each $x\in\J_p(\om_1)$ satisfies $\lim_{\alpha\to\om_1}x(\alpha)=0$, we have $\p_\alpha=T^\ast e_\alpha^\ast\xrightarrow[\alpha\to\om_1]{w\ast}T^\ast(0)=0$, thus $\p_\alpha=0$ for $\alpha\in D$ and our assertion is valid with $\lambda=0$.

\vspace*{2mm}\noindent
{\it Case 2. }$m_k>\abs{\Delta_k}$ for infinitely many $k$'s.

With no loss of generality we may suppose that $m_k>\abs{\Delta_k}$ for every $k\in\N$. For each $k\in\N$ and $\alpha\in A_k$ define $$\psi_{\alpha,k}=\sum_{\abs{\Delta_k}<j\leqslant m_k}q_{k,j}e_{\sigma_{\alpha,k}(j)}^\ast ,$$that is, $\p_{\alpha,k}=\rho_k+\psi_{\alpha,k}$. For every $k\in\N$ and $\alpha<\om_1$ we have $\n{\p_{\alpha,k}}\leqslant 1/k+\n{T^\ast e_\alpha^\ast}\leqslant 1+\n{T}$. Since the unit dual ball of $\J_p(\om_1)$ is weak$^\ast$ sequentially compact, we may find a~strictly increasing sequence $(k_i)_{i=1}^\infty$ of natural numbers, and a~strictly increasing sequence $(\beta_i)_{i=1}^\infty\subset\om_1$ such that $\beta_i\in A_{k_i}$ for each $i\in\N$ and $\p_{\beta_i,k_i}\xrightarrow[]{w\ast}\p_0$ for some $\p_0\in\J_p(\om_1)^\ast$.

\vspace*{2mm}\noindent
{\sc Claim 2. }There exist a~number $\lambda\in\K$ and a~functional $\rho\in\J_p(\om_1)^\ast$ such that $$\sum_{\abs{\Delta_{k_i}}<j\leqslant m_{k_i}}q_{k_i,j}\xrightarrow[i\to\infty]{}\lambda\quad\mbox{and}\quad\rho_{k_i}\xrightarrow[i\to\infty]{w\ast}\rho .$$

\vspace*{1mm}\noindent
Proof of Claim 2. Consider $x_0\in\J_p(\om_1)$ defined by $$x_0(\alpha)=\left\{\begin{array}{ll}1, & \mbox{if }\sup\bigcup_{i\in\N}\Delta_i<\alpha\leqslant\sup\bigcup_{i\in\N}F_{\beta_i,k_i},\\
0, & \mbox{otherwise.}\end{array}\right.$$Plainly, $\rho_{k_i}(x_0)=0$ and $\psi_{\beta_i,k_i}(x_0)=\sum_{\abs{\Delta_{k_i}}<j\leqslant m_{k_i}}q_{k_i,j}$ for every $i\in\N$, thus the convergence $\p_{\beta_i,k_i}(x_0)\to\p_0(x_0)$ implies the first part of the claim with $\lambda=\p_0(x_0)$. To complete the argument let $$X=\oo\span\Bigl\{\ind_{[0,\alpha]}\colon 0\leqslant\alpha\leqslant\sup\ccup_{i\in\N}\Delta_i\Bigr\}$$and observe that for each $x\in X$ we have $\rho_{k_i}(x)=\p_{\beta_i,k_i}(x)\to\p_0(x)$. Obviously, $\J_p(\om_1)=X\oplus Y$, where $Y$ consists of all sequences $x\in\J_p(\om_1)$ with $x(\alpha)=0$ for each $\alpha\leqslant\sup\ccup_{i\in\N}\Delta_i$. For $y\in Y$ and every $i\in\N$ we have $\rho_{k_i}(y)=0$, hence our assertion holds true with $\rho$ defined by $\rho(x+y)=\p_0(x)$ for $x\in X$ and $y\in Y$.

\vspace*{1mm}
Now, define $D\subset\om_1$ to be the set of all ordinals $\alpha\in\om_1$ for which there exists a~sequence $(\alpha_i)_{i=1}^\infty\subset\om_1$ satisfying:
\begin{itemize*}
\item[(1)] $(\alpha_i)_{i=1}^\infty$ is strictly increasing;
\item[(2)] $\alpha_i\in A_{k_i}$ for each $i\in\N$;
\item[(3)] $\lim_{i\to\infty}\alpha_i=\lim_{i\to\infty}\min(F_{\alpha_i,k_i}\setminus\Delta_{k_i})=\alpha$;
\item[(4)] $\max(F_{\alpha_i,k_i})<\alpha_{i+1}<\alpha$ for each $i\in\N$.

\end{itemize*}
It is clear that $D$ is then a~club subset of $\om_1$.

\vspace*{2mm}\noindent
{\sc Claim 3. }If $(\alpha_i)_{i=1}^\infty\subset\om_1$ satisfies conditions (1)-(4) and converges to an~$\alpha\in D$, then $\p_{\alpha_i,k_i}\xrightarrow[i\to\infty]{w\ast}\rho+\lambda e_\alpha^\ast$.

\vspace*{1mm}\noindent
Proof of Claim 3. For every $x\in\J_p(\om_1)$ we have
\begin{equation*}
\begin{split}
\abs{\p_{\alpha_i,k_i}(x)-(\rho+\lambda e_\alpha^\ast)(x)}&\leqslant\abs{(\rho_{k_i}-\rho)(x)}+\abs{\psi_{\alpha_i,k_i}(x)-\lambda e_\alpha^\ast(x)}\\
&\leqslant\abs{(\rho_{k_i}-\rho)(x)}+\Bigl|\sum_{\abs{\Delta_{k_i}}<j\leqslant m_{k_i}}q_{k_i,j}e_\alpha^\ast(x)-\lambda e_\alpha^\ast(x)\Bigr|\\
&\qquad\qquad\, +\Bigl|\sum_{\abs{\Delta_{k_i}}<j\leqslant m_{k_i}}q_{k_i,j}\bigl(e_{\sigma_{\alpha_i,k_i}(j)}^\ast(x)-e_\alpha^\ast(x)\bigr)\Bigr|.
\end{split}
\end{equation*}
By Claim 2, the first two terms tend to zero as $i\to\infty$.

Let $X_\alpha$ be the set of all sequences $x\in\J_p(\om_1)$ which are constant on some neighbourhood of $\alpha$. If $x\in X_\alpha$ then conditions (3) and (4) guarantee that for sufficiently large $i$'s every summand in the last term equals zero. Consequently, $\p_{\alpha_i,k_i}(x)\to(\rho+\lambda e_\alpha^\ast)(x)$ is valid for every $x\in X_\alpha$, hence also for every $x\in\span X_\alpha$, so also for $x\in\oo\span X_\alpha$, since $\p_{\alpha_i,k_i}$ are equicontinuous. But $X_\alpha$ is linearly dense in $\J_p(\om_1)$, as it contains each element of the Schauder basis.

\vspace*{2mm}
To complete the proof let again $\alpha\in D$ be the limit of a~sequence $(\alpha_i)_{i=1}^\infty\subset\om_1$ satisfying (1)-(4). By Claim 1, we have $\p_{\alpha_i,k_i}\xrightarrow[]{w\ast}\p_\alpha$, whence Claim 3 yields $\p_\alpha=\rho+\lambda e_\alpha^\ast$. 

Since this is true for every $\alpha\in D$, we may pass to the limit with $\alpha\to\om_1$ ($\alpha\in D$) to get $\p_\alpha\xrightarrow[]{w\ast}\rho$ and, on the other hand, $\p_\alpha=T^\ast e_\alpha^\ast\xrightarrow[]{w\ast}0$. Therefore, $\rho=0$, thus $\p_\alpha=\lambda e_\alpha^\ast$ for each $\alpha\in D$.
\end{proof}

Now, as in \cite{kaniakoszmiderlaustsen}, we define a~map $\Lambda_p\colon\B(\J_p(\om_1))\to\K$ by $\Lambda_p(T)=\lambda$, where $\lambda\in\K$ is chosen such that (\ref{superlemma}) holds (the uniqueness of such a~$\lambda$, for fixed $T$, follows from the fact that the intersection of two club subsets of $\om_1$ is again a~club subset). Obviously, $\Lambda_p$ is a~non-zero linear and multiplicative functional, hence $\ker\Lambda_p$ is~a (maximal) ideal in $\B(\J_p(\om_1))$ of codimension one.

Let $L(\om_1)$ be the set of all non-zero limit ordinals less than $\om_1$. For every ordinal $\alpha\in (0,\om_1)$ we define a~subspace of $\J_p(\om_1)$ by $$\J_p(\alpha)=\oo\span\bigl\{\ind_{[0,\beta]}\colon 0\leqslant\beta<\alpha\bigr\}$$and we let $$\mathfrak{G}_p=\Bigl(\bigoplus_{\alpha\in L(\om_1)}\J_p(\alpha)\Bigr)_{\ell_p}.$$Being an~$\ell_p$-sum of $\mathsf{WCG}$ (even separable) Banach spaces, with $p\in (1,\infty)$, the space $\mathfrak{G}_p$ is a~$\mathsf{WCG}$ Banach space (cf. \cite[Proposition 2.4]{LI}). It turns out that weakly compactly generated operators on the long James space factor through this concrete $\mathsf{WCG}$ space, just like weakly compact operators on the classical James space factor through the concrete reflexive space $\left(\bigoplus_{n=1}^\infty \J_p(n)\right)_{\ell_p}$ identified by Laustsen.

The estimates given by Casazza, Lin and Lohman in \cite[Lemma 2]{casazza_lin_lohman} for the classical James space and $p=2$ can be easily generalised in the following manner.
\begin{lemma}\label{Casazza}
Let $p\in (1,\infty)$ and $k,n_1,\ldots ,n_k\in\N$. For any ordinal numbers $$\gamma_{1,1}<\ldots<\gamma_{1,n_1}<\ldots<\gamma_{k,1}<\ldots<\gamma_{k,n_k}<\om_1,$$satisfying $\gamma_{i,n_i}+1<\gamma_{i+1,1}$ for each $1\leqslant i<k$, and for any scalars $t_{i,j}$, we have 
$$
\sum_{i=1}^k\Bigl\|\sum_{j=1}^{n_i}t_{i,j}e_{\gamma_{i,j}}\Bigr\|_{p,0}^p\leqslant\Bigl\|\sum_{i=1}^k\sum_{j=1}^{n_i}t_{i,j}e_{\gamma_{i,j}}\Bigr\|_{p,0}^p\leqslant 2^{p-1}\sum_{i=1}^k\Bigl\|\sum_{j=1}^{n_i}t_{i,j}e_{\gamma_{i,j}}\Bigr\|_{p,0}^p.
$$
\end{lemma}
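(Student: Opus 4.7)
The plan is to adapt the classical James-space argument of Casazza, Lin and Lohman, exploiting the gap condition $\gamma_{i,n_i}+1<\gamma_{i+1,1}$ to produce \emph{zero points}---ordinals not in the support of any $x_i:=\sum_{j=1}^{n_i}t_{i,j}e_{\gamma_{i,j}}$---that separate the consecutive blocks. Write $x=\sum_{i=1}^{k}x_i$.

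For the lower bound, pick for each $i$ a partition $\xi_0^{(i)}<\ldots<\xi_{r_i}^{(i)}$ whose $p$-sum approximates $\n{x_i}_{p,0}^p$ arbitrarily well from below. Because the supports of the $x_i$'s sit in pairwise disjoint ordinal intervals, the concatenation of these partitions is still strictly increasing and defines a single valid partition for $x$; its within-block contributions coincide with those of the individual $x_i$'s, while the interposed ``bridging'' terms are non-negative. Passing to the limit yields $\sum_{i=1}^{k}\n{x_i}_{p,0}^p\leqslant\n{x}_{p,0}^p$.

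For the upper bound, fix an arbitrary partition $0\leqslant\alpha_0<\ldots<\alpha_m<\om_1$. Each $\alpha_j$ either equals some $\gamma_{i,\ell}$ or is a zero point of $x$, and disjointness of blocks forces the block-$i$ entries to form a contiguous subsequence $\beta_{i,1}<\ldots<\beta_{i,m_i}$. Each consecutive pair falls into one of four types: (I) both inside the same block $i$; (II) one in block $i$ and the other a zero point; (III) block $i$ immediately followed by block $i'$ with $i<i'$; (IV) both zero. Type~(III) contributions, namely $\abs{x_{i'}(\beta_{i',1})-x_i(\beta_{i,m_i})}^p$, are estimated via the elementary inequality $\abs{a-b}^p\leqslant 2^{p-1}(\abs{a}^p+\abs{b}^p)$ and re-attributed to the two blocks involved. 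Grouping everything by block gives
$$
\sum_{j=1}^{m}\abs{x(\alpha_j)-x(\alpha_{j-1})}^p\leqslant 2^{p-1}\sum_{i=1}^{k}\bigl[\Sigma_i+a_i\abs{x_i(\beta_{i,1})}^p+b_i\abs{x_i(\beta_{i,m_i})}^p\bigr],
$$
where $\Sigma_i=\sum_\ell\abs{x_i(\beta_{i,\ell})-x_i(\beta_{i,\ell-1})}^p$ and $a_i,b_i\in\{0,1\}$ encode whether block $i$ has a left/right neighbour in the partition.

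To finish, I dominate each bracketed expression by $\n{x_i}_{p,0}^p$ using the partition $\tau_-<\beta_{i,1}<\ldots<\beta_{i,m_i}<\tau_+$ with $\tau_+=\gamma_{i,n_i}+1$ and $\tau_-=\gamma_{i-1,n_{i-1}}+1$ for $i\geqslant 2$ (or $\tau_-=0$ if $i=1$ and $\gamma_{1,1}>0$); the gap condition ensures both auxiliary ordinals lie outside $\mathrm{supp}(x_i)$. The only edge case is $i=1$ with $\gamma_{1,1}=0$, where no admissible $\tau_-$ exists; however, one checks that in this situation $\beta_{1,1}=\alpha_0$ (any $\alpha_{j-1}<\beta_{1,1}$ would lie in $[0,\beta_{1,1})\subseteq\mathrm{supp}(x_1)$, forcing it into block~1 and contradicting that $\beta_{1,1}$ is the first block-$1$ entry), whence $a_1=0$ and the left boundary term is simply absent. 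The main technical obstacle is precisely this bookkeeping of boundary contributions and auxiliary ordinals; everything else is a routine consequence of the power-mean inequality.
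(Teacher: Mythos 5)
Your overall strategy is the same as the paper's: for the lower bound, confine near-optimal partitions for each block to disjoint intervals and concatenate; for the upper bound, split cross-block differences using $\abs{t-u}^p\leqslant 2^{p-1}(\abs{t}^p+\abs{u}^p)$ and regroup by blocks. However, two steps fail as written. First, your displayed grouped inequality is false, because a partition point at which $x$ vanishes may lie strictly \emph{between} two support points of the same block (the ordinals $\gamma_{i,j}$ within one block need not be consecutive). Take $k=1$, $\gamma_{1,1}=0$, $\gamma_{1,2}=2$, $t_{1,1}=t_{1,2}=1$ and the partition $0<1<2$: the left-hand side equals $2$, while $\Sigma_1=\abs{x_1(2)-x_1(0)}^p=0$ and $a_1=b_1=0$, so your right-hand side is $0$. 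The point is that an interior zero point contributes $\abs{x_i(\beta_{i,\ell})}^p+\abs{x_i(\beta_{i,\ell+1})}^p$, which is not dominated by $2^{p-1}\abs{x_i(\beta_{i,\ell+1})-x_i(\beta_{i,\ell})}^p$. The repair is easy and is what the paper's argument implicitly does: do not collapse to $\Sigma_i$, but keep \emph{all} partition points lying in block $i$'s territory (support points and interior zero points alike); then the block-$i$ package is literally a partition value of $x_i$ (after adjoining your auxiliary endpoints $\tau_\pm$), hence at most $\n{x_i}_{p,0}^p$, and the factor $2^{p-1}$ is needed only for the genuinely cross-block (type III) terms.

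Second, the edge case $i=1$, $\gamma_{1,1}=0$ is argued incorrectly: the inclusion $[0,\beta_{1,1})\subseteq\supp(x_1)$ is false in general (the support is a finite set, not an initial segment), and indeed $a_1$ can equal $1$ there — take $\gamma_{1,1}=0$, $\gamma_{1,2}=5$ and a partition containing $3<5$, so that $\beta_{1,1}=5\neq\alpha_0$. No special case is needed: whenever a left-boundary term occurs, the partition itself supplies a point $\alpha<\beta_{1,1}$ with $x_1(\alpha)=0$, and that very point can serve as $\tau_-$. A similar (milder) gloss occurs in your lower bound: the near-optimal partitions for $\n{x_i}_{p,0}$ are a priori spread over all of $[0,\om_1)$, so before concatenating you must first observe, as the paper does, that the supremum for $x_i$ may be computed using only indices in $(\gamma_{i-1,n_{i-1}},\gamma_{i+1,1})$, the gap hypothesis providing the separating ordinals that make the concatenation strictly increasing.
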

\begin{proof}
The first inequality is obvious, since for each $1\leqslant i\leqslant k$ the $i$th summand on the left-hand side may be calculated using only indices from the interval $(\gamma_{i-1,n_{i-1}},\gamma_{i+1,1})$, where we put $\gamma_{0,n_0}=-1$ and $\gamma_{k+1,1}=\om_1$.

For the second estimate notice that convexity of the function $x\mapsto |x|^p$ gives $\abs{t+u}^p\leqslant 2^{p-1}(\abs{t}^p+\abs{u}^p)$ for $t,u\in\C$. Consequently, for $t,u\in\mathbb{C}$ we have $$\abs{t-u}^p\leqslant 2^{p-1}\bigl(\abs{t-0}^p+\abs{0-u}^p\bigr),$$thus we can change every estimate under the supremum sign defining the middle term into a~sum which does not exceed the right-hand side.
\end{proof}

\begin{proposition}\label{prop_long_James}
For each $p\in (1,\infty)$ the space $\J_p(\om_1)$ contains a~complemented copy of $\mathfrak{G}_p$ with $\J_p(\om_1)\simeq\mathfrak{G}_p\oplus\J_p(\om_1)$.
\end{proposition}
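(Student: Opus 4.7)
The plan is to embed $\mathfrak{G}_p$ complementedly into $\J_p(\om_1)$ via explicit shift operators, and then to identify the kernel of the resulting projection with $\J_p(\om_1)$. Enumerate $L(\om_1)=(\alpha_\xi)_{\xi<\om_1}$ and, by transfinite recursion, choose successor ordinals $(a_\xi)_{\xi<\om_1}$ with $a_\xi>\sup_{\eta<\xi}(a_\eta+\alpha_\eta)$; set $c_\xi=a_\xi+\alpha_\xi$ and $I_\xi=[a_\xi,c_\xi)$. The $I_\xi$'s are pairwise disjoint, separated by at least one buffer ordinal, and $\sup_\xi c_\xi=\om_1$. Define $U\colon\mathfrak{G}_p\to\J_p(\om_1)$ by placing each component $x_{\alpha_\xi}\in\J_p(\alpha_\xi)$, shifted by $a_\xi$, onto $I_\xi$ and letting $U\vec{x}$ vanish outside $\bigcup_\xi I_\xi$. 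Continuity of $U\vec{x}$ at $c_\xi$ holds because $\lim_{\beta\to\alpha_\xi}x_{\alpha_\xi}(\beta)=0$ (built into $\J_p(\alpha_\xi)\subseteq\w\J_p(\alpha_\xi)$), while continuity at cluster points of the countable support of $\vec{x}$ follows from $\ell_p$-summability of the component norms. Lemma \ref{Casazza}, applied to partitions adapted to the interval structure, yields constants $0<c\leqslant C<\infty$ with $c\n{\vec{x}}_{\mathfrak{G}_p}\leqslant\n{U\vec{x}}_{\J_p}\leqslant C\n{\vec{x}}_{\mathfrak{G}_p}$.

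For the projection, put $(Px)_{\alpha_\xi}(\beta)=x(a_\xi+\beta)-x(c_\xi)$ for $\beta\in[0,\alpha_\xi)$. Continuity of $x$ at $c_\xi$ gives $\lim_{\beta\to\alpha_\xi}(Px)_{\alpha_\xi}(\beta)=0$, so $(Px)_{\alpha_\xi}\in\J_p(\alpha_\xi)$. Since the $I_\xi$'s are pairwise disjoint, any finite family of partitions of the individual $I_\xi$'s can be concatenated into a single partition of $[0,\om_1)$, which yields $\sum_\xi\n{(Px)_{\alpha_\xi}}_{p,0}^p\leqslant\n{x}_{p,0}^p$ and hence $\n{Px}_{\mathfrak{G}_p}\lesssim\n{x}_{\J_p}$. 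The relation $PU=I_{\mathfrak{G}_p}$ is immediate from the definitions, so $U(\mathfrak{G}_p)$ is a complemented copy of $\mathfrak{G}_p$ inside $\J_p(\om_1)$.

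It remains to identify $\ker P$. By construction, $x\in\ker P$ iff $x$ is constant (equal to $x(c_\xi)$) on each closed interval $\oo{I_\xi}=[a_\xi,c_\xi]$. Let $T=[0,\om_1)/{\sim}$ be the quotient that collapses every $\oo{I_\xi}$ to a point. Because any $\beta<\om_1$ meets at most countably many $I_\xi$'s, every proper initial segment of $T$ is countable; combined with cofinality $\om_1$, this forces $T$ to have order type $\om_1$. The successor property of the $a_\xi$'s guarantees that each collapsed class is a successor in $T$, which ensures that continuity of $x$ descends to continuity of the induced function $\bar{x}$ on $T$ in the order topology. Fixing an order isomorphism $\Phi\colon T\to[0,\om_1)$, the assignment $x\mapsto\bar{x}\circ\Phi^{-1}$ is an isometry $\ker P\to\J_p(\om_1)$ in $\n{\cdot}_{p,0}$, hence an isomorphism in $\n{\cdot}_{\J_p}$. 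Combining with the decomposition $\J_p(\om_1)=U(\mathfrak{G}_p)\oplus\ker P$ gives $\J_p(\om_1)\simeq\mathfrak{G}_p\oplus\J_p(\om_1)$.

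The two delicate points are: (i) the upper bound for $U$, which requires Lemma \ref{Casazza} to be applied carefully so that cross-interval $p$-variation contributions are absorbed into the diagonal sum $\sum_\xi\n{x_{\alpha_\xi}}_{\J_p}^p$ (not just the within-interval ones); and (ii) the kernel analysis, where the successor choice of each $a_\xi$ is indispensable — were some $a_\xi$ a limit ordinal, the class $[a_\xi,c_\xi]$ would be a limit point in $T$ with no obvious neighbourhood on which to control $\bar{x}$, potentially breaking the continuity needed for the isomorphism $\ker P\simeq\J_p(\om_1)$.
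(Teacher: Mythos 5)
Your argument is correct, but it takes a genuinely different route from the paper's. The paper first passes to the isomorphic space $\w\J_p(\om_1)$, where the unit vectors $(e_\alpha)_{\alpha<\om_1}$ form a basis: the canonical basis of (an isometric copy of) $\mathfrak{G}_p$ is realised as a subfamily $(e_\alpha)_{\alpha\in A}$ via an order isomorphism of $\om_1$ with the disjoint union of the index sets interleaved with buffer ordinals, so that Lemma \ref{Casazza} applies verbatim to finite blocks of unit vectors; the complement is then produced by a Casazza--Lin--Lohman-type construction (cf.\ \cite[Theorem 5]{casazza_lin_lohman}), namely the closed span of certain interval indicators and unit vectors, which is a block basic sequence equivalent to the whole basis and hence spans a copy of $\w\J_p(\om_1)$. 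You instead work directly in the continuous model $\J_p(\om_1)$: you shift the summands $\J_p(\alpha_\xi)$ onto disjoint skipped intervals $I_\xi$, write down an explicit bounded map $P$ by subtracting the value at $c_\xi$, and identify $\ker P$ (the functions constant on each $[a_\xi,c_\xi]$) isometrically with $\J_p(\om_1)$ by collapsing the intervals, so the decomposition $\J_p(\om_1)\simeq\mathfrak{G}_p\oplus\J_p(\om_1)$ comes out in one stroke. What your approach buys is a very concrete projection and an isometric identification of the complement; its price is that the upper norm estimate for $U$ needs the skipped-block inequality for arbitrary functions with separated supports rather than for unit vectors --- in effect the function version of Lemma \ref{Casazza}, which is exactly what the paper's Lemma \ref{skipped} records and which follows by the same convexity argument, the only extra care being the crossing term at the left edge of a block (the value $x_{\alpha_\xi}(0)$ need not vanish), costing another fixed constant; you flag this correctly.

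Two minor remarks: boundedness below of $U$ is automatic from $PU=I_{\mathfrak{G}_p}$ together with the boundedness of $P$, so only the upper estimate genuinely needs the skipped-block lemma; and the successor choice of the $a_\xi$, while a convenient simplification, is not actually indispensable --- if some $a_\xi$ were a limit, the collapsed class would be a limit point of the quotient order, and continuity of the induced function there (respectively, of the pulled-back function at $a_\xi$) would follow from continuity of $x$ at $a_\xi$ (respectively, of the given function at that point of the quotient), so the identification of $\ker P$ with $\J_p(\om_1)$ would still go through.
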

\begin{proof}
For $\alpha<\om_1$ let us define a~subspace of $\w\J_p$ by $\w\J_p(\alpha)=\oo\span\{e_\beta\colon 0\leqslant\beta<\alpha\}$. The order preserving bijection $\p\colon\om_1\to D(\om_1)\cup\{0\}$ induces a surjective isometry between $\w\J_p(\alpha)$ and $\J_p(\alpha)$ for each $\alpha\in L(\om_1)$. Hence, $\mathfrak{G}_p$ is isometric to $\mathfrak{\w G}_p$, where $$\mathfrak{\w G}_p=\Bigl(\bigoplus_{\alpha\in L(\om_1)}\w\J_p(\alpha)\Bigr)_{\ell_p}.$$It is enough to prove that the space $\w\J_p(\om_1)$ contains a~complemented copy of $\mathfrak{\w G}_p$. 

\vspace*{2mm}\noindent
{\sc Claim 1. }The canonical basis of $\mathfrak{\w G}_p$ is equivalent to a~basic sequence $(e_\alpha)_{\alpha\in A}\subset\w\J_p(\om_1)$ with a~certain set $A\subset\om_1$.

\vspace*{2mm}\noindent
Proof of Claim 1. Define $$\Gamma=\{(\alpha,\beta)\in\om_1\times\om_1\colon\alpha\in L(\om_1)\mbox{ and }0\leqslant\beta<\alpha\}$$and let $\Gamma^\ast=\Gamma\cup\om_1$. Consider a~linear order $\prec$ on $\Gamma^\ast$ defined as follows: 
\begin{itemize*}
\item[(i)] $\prec\vert_\Gamma$ is the lexicographic order;
\item[(ii)] $\prec\vert_{\om_1}$ is the natural order;
\item[(iii)] $(\beta,\gamma)<\alpha<(\delta,\e)$ for every $(\beta,\gamma), (\delta,\e)\in\Gamma$ with $\beta\leqslant\alpha<\delta$.
\end{itemize*}
By a~standard recursive argument, we infer that $(\Gamma^\ast,\prec)$ is order-isomorphic to $\om_1$. Let $\psi\colon\Gamma^\ast\to\om_1$ be the order-isomorphism.

Now, suppose $k,n_1,\ldots ,n_k\in\N$ and we are given ordinal numbers $\alpha_1<\ldots<\alpha_k\in L(\om_1)$ and $\beta_{i,1}<\ldots<\beta_{i,n_i}<\alpha_i$ (for $1\leqslant i\leqslant k$) such that $(\alpha_i,\beta_{i,j})\in\Gamma$. Let $x$ be the element of $\mathfrak{\w G}_p$ whose $\alpha_i$th coordinate equals $\sum_{j=1}^{n_i}t_{i,j}e_{\beta_{i,j}}$ for $1\leqslant i\leqslant k$ (with some scalars $t_{i,j}$), and whose all other coordinates are zeros. Let also $\gamma_{i,j}=\psi(\alpha_i,\beta_{i,j})$ for $1\leqslant i\leqslant k$ and $1\leqslant j\leqslant n_i$.

Define $y\in\w\J_p(\om_1)$ by the formula $y=\sum_{i=1}^k\sum_{j=1}^{n_i}t_{i,j}e_{\gamma_{i,j}}$. Since $$\gamma_{i,n_i}=\psi(\alpha_i,\beta_{i,n_i})<\psi(\alpha_i)<\psi(\alpha_{i+1},\beta_1)=\gamma_{i+1,1}\quad\mbox{for }1\leqslant i<k,$$an application of Lemma \ref{Casazza} yields $$\n{x}_{\mathfrak{\w G}_p}^p\leqslant\n{y}_{p,0}^p\leqslant 2^{p-1}\n{x}_{\mathfrak{\w G}_p}^p.$$
Thus there is an isomorphism witnessing that the canonical basis of $\mathfrak{\w G}_p$ is equivalent to $(e_\alpha)_{\alpha\in A}$ with $A=\psi(\Gamma)\subset\om_1$.

The next claim will complete the proof.

\vspace*{2mm}\noindent
{\sc Claim 2. }The subspace $X=\oo\span\{e_\alpha\colon\alpha\in A\}$ is complemented in $\w\J_p(\om_1)$ by a~copy of $\w\J_p(\om_1)$.

\vspace*{2mm}\noindent
Let $B=(\om_1\setminus A)\cup\{-1\}$. Following the lines of the proof of \cite[Theorem 5]{casazza_lin_lohman}, consider two sets: 
$$\begin{array}{r}
C=\bigl\{\ind_{[\alpha,\beta]}\in\w\J_p(\om_1)\colon\alpha\in A, \mbox{ either }\alpha\in L(\om_1)\mbox{ or }\alpha=\alpha^\prime+1\mbox{ with }\alpha^\prime\in B,\,\,\,\\
\mbox{whereas }\beta=\min\{\beta^\prime\in B\colon \beta^\prime>\alpha\}\bigr\}
\end{array}
$$
and $$D=\{e_\beta\in\w\J_p(\om_1)\colon\beta\in B, \mbox{either }\beta\in L(\om_1)\mbox{ or }\beta=\beta^\prime+1\mbox{ with }\beta^\prime\in B\},$$and define $Y=\oo\span(C\cup D)$. Then for $x\in X$ and $y\in Y$ we have $\n{y}_{p,0}\leqslant\n{x+y}_{p,0}$, since every partial variation approximating $\n{y}_{p,0}$ may be calculated for coordinates from $B$, and it remains the same for $x+y$. Therefore, $\n{x}_{p,0}\leqslant 2\n{x+y}_{p,0}$, thus there is a~projection $P$ on $\w\J_p(\om_1)$, with range $X$ and kernel $Y$, and $\n{P}\leqslant 2$.

The elements of $C\cup D$ form a~block basic sequence of $(e_\alpha)_{\alpha<\om_1}$ which is plainly equivalent to $(e_\alpha)_{\alpha<\om_1}$. Thus $Y\simeq\w\J_p(\om_1)$ and the proof is completed.
\end{proof}

\begin{lemma}\label{l_p-suma}
For each $p\in (1,\infty)$ we have $\mathfrak{G}_p\simeq\ell_p(\om_1,\mathfrak{G}_p)$ (the $\ell_p$-sum of $\om_1$ copies of $\mathfrak{G}_p$). Consequently, by Pe\l czy\'nski decomposition method, $\mathfrak{G}_p$ is also isomorphic to the $\ell_p$-sum of countably many copies of itself.
\end{lemma}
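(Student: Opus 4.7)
The lemma has two parts; the second (the countable case) will follow from the first by a standard application of the Pe\l czy\'nski decomposition method. Assuming $\mathfrak{G}_p\simeq\ell_p(\om_1,\mathfrak{G}_p)$, set $X=\mathfrak{G}_p$ and $Y=\ell_p(\N,\mathfrak{G}_p)$. Then $X$ is a complemented $\ell_p$-summand of $Y$ (the first coordinate), $Y$ is a complemented $\ell_p$-summand of $\ell_p(\om_1,\mathfrak{G}_p)\simeq X$ (projection onto any countable set of coordinates), and $Y\simeq\ell_p(\N,Y)$ since $\ell_p(\N\times\N,\mathfrak{G}_p)\simeq\ell_p(\N,\mathfrak{G}_p)$. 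The decomposition method then forces $X\simeq Y$, i.e.\ $\mathfrak{G}_p\simeq\ell_p(\N,\mathfrak{G}_p)$.

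To prove the main claim, unfold the definitions:
$$\ell_p(\om_1,\mathfrak{G}_p)\simeq\Bigl(\bigoplus_{(\gamma,\alpha)\in\om_1\times L(\om_1)}\J_p(\alpha)\Bigr)_{\ell_p}.$$
The plan is to realise the right-hand side as an $\om_1$-indexed regrouping of $\mathfrak{G}_p$. Since $|L(\om_1)|=\om_1$, pick a partition $L(\om_1)=\bigsqcup_{\gamma<\om_1}N_\gamma$ into $\om_1$ pairwise disjoint subsets, each of cardinality $\om_1$ (for instance, enumerate $L(\om_1)=\{\lambda_\xi\colon\xi<\om_1\}$ in increasing order and transport any partition of $\om_1$ into $\om_1$ uncountable pieces through this enumeration). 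It then suffices to produce, \emph{uniformly} in $\gamma<\om_1$, an isomorphism
$$\Bigl(\bigoplus_{\alpha\in N_\gamma}\J_p(\alpha)\Bigr)_{\ell_p}\simeq\mathfrak{G}_p,$$
since taking the $\ell_p$-sum over $\gamma$ will then deliver $\mathfrak{G}_p\simeq\ell_p(\om_1,\mathfrak{G}_p)$.

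The main obstacle is the uniformity of the Banach--Mazur constants in the last display, without which the $\ell_p$-sum over $\gamma$ fails to be an isomorphism. The cleanest route I would take is to first show that for every countable limit ordinal $\alpha\geq\om$ the space $\J_p(\alpha)$ is isomorphic to the classical James space $\J_p$, with distance bounded by a constant independent of $\alpha$. This should be extractable from a block-basis argument in the spirit of Proposition \ref{prop_long_James}: by Lemma \ref{Casazza} applied to the canonical basis $(\ind_{[0,\beta]})_{0\leqslant\beta<\alpha}$ of $\J_p(\alpha)$, any increasing enumeration by type~$\om$ yields a basic sequence uniformly equivalent to the standard James basis. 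Granting this, $\mathfrak{G}_p\simeq\ell_p(\om_1,\J_p)$, and
$$\Bigl(\bigoplus_{\alpha\in N_\gamma}\J_p(\alpha)\Bigr)_{\ell_p}\simeq\ell_p(N_\gamma,\J_p)\simeq\ell_p(\om_1,\J_p)\simeq\mathfrak{G}_p$$
with uniform constants, completing the argument.
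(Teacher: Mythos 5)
Your reduction of the countable case to the case of $\om_1$ summands via the decomposition method is fine, but the proof of the main claim has a genuine gap at exactly the point you flag as the ``main obstacle''. The key assertion you rely on --- that $\J_p(\alpha)$ is isomorphic to the classical James space $\J_p$, with constants independent of the countable limit ordinal $\alpha$ --- is false, even without uniformity. For instance, splitting a function on $[0,\om\cdot 2)$ at the point $\om$ shows that $\J_p(\om\cdot 2)\simeq\J_p\oplus\J_p$, which is quasi-reflexive of order two, whereas $\J_p$ is quasi-reflexive of order one; for $\alpha=\om^2$ the canonical image of $\J_p(\alpha)$ even has infinite codimension in its bidual. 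So the spaces $\J_p(\alpha)$ are pairwise non-isomorphic along a cofinal set of $\alpha$'s. The mechanism you propose cannot rescue this: Lemma~\ref{Casazza} gives upper and lower $\ell_p$-estimates for vectors whose supports (with respect to the coordinate vectors $e_\gamma$) are separated by gaps; it says nothing about the basis $(\ind_{[0,\beta]})_{\beta<\alpha}$ being equivalent to the standard James basis. Moreover, for $\alpha>\om$ there is no increasing enumeration of $[0,\alpha)$ of order type $\om$, and any bijective re-enumeration scrambles the ordinal order on which the $p$-variation norm depends, so it does not carry the transfinite basis of $\J_p(\alpha)$ to a James-type basis. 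Consequently the identification $\mathfrak{G}_p\simeq\ell_p(\om_1,\J_p)$ is unsupported, and the regrouping over the partition $(N_\gamma)_{\gamma<\om_1}$ --- which needs the blocks $(\bigoplus_{\alpha\in N_\gamma}\J_p(\alpha))_{\ell_p}$ to be \emph{uniformly} isomorphic to $\mathfrak{G}_p$ --- collapses.

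The paper's proof avoids identifying any summand up to isomorphism. By transfinite recursion it produces an injection $\theta\colon\om_1\times L(\om_1)\to L(\om_1)$ with $\theta(\alpha,\beta)>\beta$; since $\J_p(\beta)$ sits isometrically inside $\J_p(\theta(\alpha,\beta))$ as the first $\beta$ coordinates, and is the range of a norm-one projection there, the induced map exhibits $\ell_p(\om_1,\mathfrak{G}_p)$ as a $1$-complemented subspace of $\mathfrak{G}_p$. Conversely $\mathfrak{G}_p$ is trivially complemented in $\ell_p(\om_1,\mathfrak{G}_p)$, both spaces are isomorphic to their squares, and the Pe\l czy\'nski decomposition method yields $\mathfrak{G}_p\simeq\ell_p(\om_1,\mathfrak{G}_p)$. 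If you wish to salvage your outline, replace the attempted identification of the individual blocks with this kind of complemented-embedding argument: it is the monotone re-indexing $\theta$, not any isomorphic classification of the spaces $\J_p(\alpha)$, that supplies the needed uniformity.
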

\begin{proof}
By using transfinite induction, with respect to the lexicographic order on the set $\om_1\times L(\om_1)$, we construct a~one-to-one map $\theta\colon\om_1\times L(\om_1)\to L(\om_1)$ such that $\theta(\alpha,\beta)>\beta$ for every $(\alpha,\beta)\in\om_1\times L(\om_1)$.

For $(\alpha,\beta)\in\om_1\times L(\om_1)$ let $\iota_{\alpha,\beta}\colon\J_p(\beta)\to\J_p(\theta(\alpha,\beta))$ be the embedding of the $\beta$th summand of the $\alpha$th coordinate of $\ell_p(\om_1,\mathfrak{G}_p)$ into $\J_p(\theta(\alpha,\beta))$ which just puts the sequences from $\J_p(\beta)$ into the first $\beta$ coordinates. Let $\iota\colon\ell_p(\om_1,\mathfrak{G}_p)\to\mathfrak{G}_p$ be the embedding naturally produced by all $\iota_{\alpha,\beta}$'s.

Obviously, $\iota$ is an isometric embedding, thus $\iota(\ell_p(\om_1,\mathfrak{G}_p))$ is a~closed subspace of $\mathfrak{G}_p$. Since every summand $\J_p(\theta(\alpha,\beta))$ of $\mathfrak{G}_p$ admits a~natural, norm one projection onto $\J_p(\beta)$, there also exists a~norm one projection from $\mathfrak{G}_p$ onto $\iota(\ell_p(\om_1,\mathfrak{G}_p))$. Thus, $\ell_p(\om_1,\mathfrak{G}_p)$ is isomorphic to a~complemented subspace of $\mathfrak{G}_p$ and, obviously, vice versa. Moreover, both spaces $\mathfrak{G}_p$ and $\ell_p(\om_1,\mathfrak{G}_p)$ are clearly isomorphic to their squares. Hence, by the Pe\l czy\'nski decomposition method, we get the assertion.
\end{proof}

The following two assertions can be proved in the same manner as Proposition 2.1 and Corollary 2.3 in \cite{kaniakoszmiderlaustsen}, so we omit their proofs.
\begin{lemma}\label{r(T)}
If $T\in\B(\J_p(\om_1))$ and $\Lambda_p(T)\not=0$ then $T$ fixes a~complemented copy of $\J_p(\om_1)$ and the range of $T$ contains a~copy of $\J_p(\om_1)$, complemented in $\J_p(\om_1)$.
\end{lemma}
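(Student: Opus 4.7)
The plan is to apply Theorem~\ref{Koszmider_modified} with $\lambda=\Lambda_p(T)\neq 0$ to obtain a~club set $D\subseteq\om_1$ on which $(Tx)(\alpha)=\lambda x(\alpha)$ for every $x\in\J_p(\om_1)$, and then to construct, from the basis vectors $\ind_{[0,d]}$ with $d\in D$, a~complemented copy of $\J_p(\om_1)$ inside $\J_p(\om_1)$ on which $T$ acts as $\lambda I$ modulo a~natural projection built from $D$.

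Enumerating $D$ increasingly as $(d_\xi)_{\xi<\om_1}$, let $Y=\oo\span\{\ind_{[0,d_\xi]}\colon\xi<\om_1\}$ and define $P\colon\J_p(\om_1)\to\J_p(\om_1)$ by $(Pf)(\alpha)=f(\min\{d\in D\colon d\geqslant\alpha\})$. Two short calculations then suffice. First, $Y$ is isomorphic to $\J_p(\om_1)$: any finite combination $s=\sum c_\xi\ind_{[0,d_\xi]}$ is constant on each interval $(d_\eta,d_{\eta+1}]$ and continuous at limit points of $D$ (by closedness of $D$), so its $p$-variation is realised on increasing sequences from $D$ alone and coincides with that of $\sum c_\xi\ind_{[0,\xi]}$ in $\J_p(\om_1)$; hence the two bases are equivalent. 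Second, $Pf$ is continuous on $[0,\om_1)$ --- at limits of $D$ because $f$ is, and at limit ordinals outside $D$ because $\om_1\setminus D$ is open and $Pf$ is accordingly constant on a~right-neighbourhood --- has zero limit at $\om_1$ by cofinality of $D$, and satisfies $\n{Pf}_{p,0}\leqslant\n{f}_{p,0}$, since every variation of $Pf$ reduces to a~variation of $f$ along a~subsequence of $D$. Thus $P$ is a~bounded projection onto $Y$.

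The argument closes as follows. For $y\in Y$, both $Ty$ and $\lambda y$ take the same values at each $\alpha\in D$; since any element of $Y$ is constant on $(d_\eta,d_{\eta+1}]$ and hence determined by its values on $D$, we obtain $PT|_Y=\lambda I_Y$. This forces $T|_Y$ to be bounded below, so $T|_Y$ is an~isomorphism onto $T(Y)$, showing that $T$ fixes the complemented copy $Y\simeq\J_p(\om_1)$. The operator $Q=\lambda^{-1}TP\colon\J_p(\om_1)\to T(Y)$ is bounded; for $z=Ty\in T(Y)$ we have $Pz=P(Ty)=\lambda y$ by what was just proved, whence $Qz=\lambda^{-1}T(\lambda y)=z$. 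Thus $Q$ is a~projection of $\J_p(\om_1)$ onto $T(Y)\simeq\J_p(\om_1)$, yielding the desired complemented copy inside the range of $T$.

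The main obstacle is the $p$-variation bookkeeping --- both the equivalence of bases and the norm estimate for $P$ --- which should follow the analogous arguments in \cite{kaniakoszmiderlaustsen}. The closed and cofinal halves of the club property of $D$ each play their part: closedness secures continuity of the relevant functions, while cofinality ensures that the ``next $D$-element'' is always defined and that $Pf$ decays at $\om_1$.
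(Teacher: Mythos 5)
Your proof is correct and is essentially the argument the paper intends: the paper omits the proof, referring to Proposition 2.1 and Corollary 2.3 of \cite{kaniakoszmiderlaustsen}, and your construction (the club $D$ from Theorem \ref{Koszmider_modified}, the $D$-determined subspace $Y$ with the retraction $(Pf)(\alpha)=f(\min\{d\in D\colon d\geqslant\alpha\})$, the identity $PT|_Y=\lambda I_Y$, and the projection $\lambda^{-1}TP$ onto $T(Y)$) is exactly the adaptation of that argument to $\J_p(\om_1)$.
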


\begin{corollary}\label{komplementarny_James}
If $Y\subset\J_p(\om_1)$ and $Y\simeq\J_p(\om_1)$ then there is a~subspace $Z\subseteq Y$ which is complemented in $\J_p(\om_1)$ and such that $Z\simeq\J_p(\om_1)$.
\end{corollary}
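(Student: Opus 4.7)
The plan is to reduce the assertion to Lemma~\ref{r(T)}. Fix an~isomorphism $\p\colon\J_p(\om_1)\to Y$, let $\iota\colon Y\hookrightarrow\J_p(\om_1)$ be the inclusion, and put $U=\iota\circ\p\in\B(\J_p(\om_1))$. The range of $U$ is exactly $Y$, so if we can show that $\Lambda_p(U)\neq 0$, Lemma~\ref{r(T)} immediately yields a~subspace $Z\subseteq Y$ with $Z\simeq\J_p(\om_1)$ and $Z$ complemented in $\J_p(\om_1)$, which is the desired conclusion.

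To verify $\Lambda_p(U)\neq 0$, I would argue by contradiction: if $\Lambda_p(U)=0$, then Theorem~\ref{Koszmider_modified} provides a~club $D\subseteq\om_1$ such that $(Ux)(\alpha)=0$ for every $x\in\J_p(\om_1)$ and every $\alpha\in D$, so $Y\subseteq W$, where
$$
W:=\bigl\{z\in\J_p(\om_1)\colon z|_D\equiv 0\bigr\}.
$$
The aim is then to prove that $W$ is itself a~$\mathsf{WCG}$ Banach space; once this is done, $Y\simeq\J_p(\om_1)$ embeds isomorphically into $W$, contradicting the fact (recalled just before Theorem~\ref{Koszmider_modified}) that $\J_p(\om_1)$ is not isomorphic to a~subspace of any $\mathsf{WCG}$ Banach space.

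The main obstacle is thus the structural claim that $W$ is $\mathsf{WCG}$. The plan for this step is to decompose $\om_1\setminus D=\bigsqcup_{\xi\in\Xi}I_\xi$ into its maximal order-convex open components; since $D$ is cofinal in $\om_1$, each $I_\xi$ is countable and $|\Xi|\leqslant\aleph_1$. Let $W_\xi$ denote the subspace of $W$ consisting of those $z$ whose support is contained in $\overline{I_\xi}$; by restriction to $I_\xi$, $W_\xi$ embeds isometrically into the separable space $\J_p(\sup I_\xi+1)$. Every $z\in W$ admits a~unique decomposition $z=\sum_{\xi\in\Xi}z_\xi$ with $z_\xi\in W_\xi$, and because consecutive intervals $I_\xi$ are separated by ordinals of $D$ on which $z$ vanishes, Lemma~\ref{Casazza} produces a~two-sided equivalence
$$
\sum_{\xi\in\Xi}\n{z_\xi}_{p,0}^p\;\leqslant\;\n{z}_{p,0}^p\;\leqslant\;2^{p-1}\sum_{\xi\in\Xi}\n{z_\xi}_{p,0}^p.
$$
This identifies $W$ isomorphically with $\bigl(\coplus_{\xi\in\Xi}W_\xi\bigr)_{\ell_p}$, which, being an $\ell_p$-sum of separable Banach spaces with $p\in(1,\infty)$, is $\mathsf{WCG}$ by~\cite[Proposition~2.4]{LI}. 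This completes the plan.
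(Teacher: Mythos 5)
Your proof is correct and is essentially the paper's intended argument: the paper omits the proof, referring to the analogue of Corollary~2.3 in \cite{kaniakoszmiderlaustsen}, and your route—compose an isomorphism onto $Y$ with the inclusion, show $\Lambda_p(U)\neq 0$ because otherwise the range would lie in the space of elements vanishing on a club, which by the Lemma~\ref{Casazza} blocking argument (exactly as in the proof of Theorem~\ref{long_James}) is isomorphic to an $\ell_p$-sum of separable spaces and hence $\mathsf{WCG}$, contradicting the recalled fact that $\J_p(\om_1)$ embeds into no $\mathsf{WCG}$ space, and then apply Lemma~\ref{r(T)} to the operator $U$ whose range is $Y$—is precisely that argument. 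The only cosmetic blemishes are that $\J_p(\sup I_\xi+1)$ should be replaced by a limit-ordinal version (e.g.\ $\w\J_p$ of the order type of $I_\xi$) and that Lemma~\ref{Casazza} is stated for finitely supported blocks, so its application to general $z_\xi$ needs a routine approximation remark—the paper itself applies it with the same latitude in the proof of Theorem~\ref{long_James}.
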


Before we proceed to the main result of this section, we require another piece of notation.

Following Dosev and Johnson \cite{dj}, for each Banach space $X$ we define
\[\mathscr{M}_X=\big\{T\in \mathscr{B}(X)\colon I \neq ATB\;\;\big(A,B\in \mathscr{B}(X)\big)\big\}.\]
In general, the set $\mathscr{M}_X$ need not be closed under addition but when it is, it is also the unique maximal ideal of $\mathscr{B}(X)$. A recent result of the first-named author and Laustsen \cite[Theorem 1.2]{kanialaustsen} states that $\mathscr{M}_{C[0,\omega_1]}$ is the unique maximal ideal of $\mathscr{B}(C[0,\omega_1])$.

Now, we are ready to prove the following theorem.

\begin{theorem}\label{long_James}
For any $p\in (1,\infty)$ we have $$\ker\Lambda_p=\WCG(\J_p(\om_1))=\mathscr{G}_{\mathfrak{G}_p}(\J_p(\om_1))=\mathscr{C}(\J_p(\om_1))=\mathscr{M}_{\J_p(\om_1)},$$
and this is the unique maximal ideal of $\B(\J_p(\om_1))$. Moreover, an~operator $T\in\B(\J_p(\om_1))$ belongs to this ideal if and only if it is $\J_p(\om_1)$-singular, that is, $T$ does not fix a~copy of $\J_p(\om_1)$.
\end{theorem}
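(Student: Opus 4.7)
My plan is to prove the chain of equalities cyclically, the delicate inclusion being the factorisation $\ker\Lambda_p\subseteq\mathscr{G}_{\mathfrak{G}_p}(\J_p(\om_1))$. First I record the easy inclusions. Since $\mathfrak{G}_p$ is $\mathsf{WCG}$, $\mathscr{G}_{\mathfrak{G}_p}(\J_p(\om_1))\subseteq\WCG(\J_p(\om_1))$ is immediate. Both $\WCG(\J_p(\om_1))\subseteq\ker\Lambda_p$ and $\mathscr{M}_{\J_p(\om_1)}\subseteq\ker\Lambda_p$ follow from Lemma~\ref{r(T)}: if $\Lambda_p(T)\ne 0$, then $T$ fixes a~complemented copy $Y\simeq\J_p(\om_1)$ and $T(Y)$ is also a~complemented copy of $\J_p(\om_1)$ in the range of $T$. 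Hence on the one hand $T$ cannot be $\mathsf{WCG}$ (as $\J_p(\om_1)$ is not isomorphic to a~subspace of any $\mathsf{WCG}$ space), and on the other hand the isomorphism $\J_p(\om_1)\simeq Y$, the complementation of $T(Y)$, and the bounded invertibility of $T|_Y$ combine to produce $A,B\in\B(\J_p(\om_1))$ with $ATB=I_{\J_p(\om_1)}$, witnessing $T\notin\mathscr{M}_{\J_p(\om_1)}$.

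\medskip

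The heart of the argument is $\ker\Lambda_p\subseteq\mathscr{G}_{\mathfrak{G}_p}(\J_p(\om_1))$. Given $T\in\ker\Lambda_p$, Theorem~\ref{Koszmider_modified} supplies a~club $D\subseteq\om_1$, which I may assume to lie in $L(\om_1)$, with $(Tx)(\alpha)=0$ for every $x\in\J_p(\om_1)$ and $\alpha\in D$. Enumerating $D$ continuously as $(\de_\xi)_{\xi<\om_1}$, each gap $\gamma_\xi:=\de_{\xi+1}-\de_\xi$ is itself a~countable limit ordinal, and the truncation $y_\xi:=(Tx)\cdot\ind_{[\de_\xi,\de_{\xi+1})}$ is continuous in $\J_p(\om_1)$, supported on $[\de_\xi,\de_{\xi+1})$, and vanishes at both endpoints; translating this interval onto $[0,\gamma_\xi)$ identifies $y_\xi$ with an~element of $\w\J_p(\gamma_\xi)\simeq\J_p(\gamma_\xi)$, which embeds into $\mathfrak{G}_p$. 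Using Lemma~\ref{l_p-suma} to replace $\mathfrak{G}_p$ by $\ell_p(\om_1,\mathfrak{G}_p)$, I define $R\colon\J_p(\om_1)\to\mathfrak{G}_p$ by placing $y_\xi$ in the $\xi$th coordinate. Lemma~\ref{Casazza} applied to the support-disjoint decomposition $Tx=\sum_\xi y_\xi$ bounds $\sum_\xi\|y_\xi\|_{\J_p}^p$ by $\|Tx\|_{\J_p}^p$, so $R$ is bounded; the reassembly map $S\colon\mathfrak{G}_p\to\J_p(\om_1)$ placing each $\xi$th coordinate back into its interval is bounded by the reverse inequality of the same lemma, and $SR=T$ by construction.

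\medskip

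To close the cycle, $\mathscr{G}_{\mathfrak{G}_p}(\J_p(\om_1))\subseteq\mathscr{C}(\J_p(\om_1))$ follows by combining $\mathfrak{G}_p\simeq\ell_p(\N,\mathfrak{G}_p)$ (Lemma~\ref{l_p-suma}) with Proposition~\ref{prop_long_James}: there are countably many pairwise disjoint, complemented copies of $\mathfrak{G}_p$ inside the $\mathfrak{G}_p$-summand of $\J_p(\om_1)$, yielding orthogonal projections $(Q_k)$ of $\J_p(\om_1)$ through each of which any operator factoring through $\mathfrak{G}_p$ factors, thereby verifying criterion~(c1)--(c2). Finally, $\mathscr{C}(\J_p(\om_1))\subseteq\mathscr{M}_{\J_p(\om_1)}$ is Willis's general result \cite[Prop.~6]{williscompressible}. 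Once the equalities are in place, $\ker\Lambda_p$ is codimension-one and hence a~maximal ideal; as any proper ideal of $\B(\J_p(\om_1))$ is contained in $\mathscr{M}_{\J_p(\om_1)}=\ker\Lambda_p$, uniqueness follows. For the singularity statement, Lemma~\ref{r(T)} gives that any $T\notin\ker\Lambda_p$ fixes a~copy of $\J_p(\om_1)$; conversely, if $T$ fixes a~copy $Y\simeq\J_p(\om_1)$, two successive applications of Corollary~\ref{komplementarny_James}---once inside $Y$ to extract $Z\subseteq Y$ complemented in $\J_p(\om_1)$ with $Z\simeq\J_p(\om_1)$, then inside $T(Z)\simeq\J_p(\om_1)$---produce a~complemented copy of $\J_p(\om_1)$ in the range of $T$, after which the construction from the first paragraph provides $A,B$ with $ATB=I_{\J_p(\om_1)}$, placing $T\notin\mathscr{M}_{\J_p(\om_1)}=\ker\Lambda_p$. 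The main obstacle is the factorisation step, where the $p$-variation estimate of Lemma~\ref{Casazza} must be reconciled with the uncountable partition of $\om_1$ induced by $D$, and the translations $\w\J_p(\gamma_\xi)\simeq\J_p(\gamma_\xi)$ have to be marshalled through the identification $\mathfrak{G}_p\simeq\ell_p(\om_1,\mathfrak{G}_p)$ so that the coordinates land coherently.
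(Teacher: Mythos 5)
Most of your argument runs along the same lines as the paper's: Theorem~\ref{Koszmider_modified} gives the club $D$, the range of any $T\in\ker\Lambda_p$ lies in the space of functions vanishing on $D$, Lemma~\ref{Casazza} identifies this space (via the gaps of $D$) with an $\ell_p$-sum of spaces $\w\J_p(\gamma_\xi)$, and then Lemma~\ref{l_p-suma} and Proposition~\ref{prop_long_James} give the factorisation through $\mathfrak{G}_p$; your explicit maps $R$ and $S$ are a slightly more hands-on version of the paper's "range contained in a complemented WCG subspace" argument, and your treatment of $\WCG$, $\mathscr{G}_{\mathfrak{G}_p}$, $\mathscr{M}_{\J_p(\om_1)}$, maximality and the singularity statement via Lemma~\ref{r(T)} and Corollary~\ref{komplementarny_James} is sound.

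There is, however, one genuine gap: the step "$\mathscr{C}(\J_p(\om_1))\subseteq\mathscr{M}_{\J_p(\om_1)}$ is Willis's general result [Prop.~6]". Proposition~6 of \cite{williscompressible} says nothing of the sort (it is the identification of the weakly compact operators on the classical James space with the compressible ones), and the inclusion you want is simply false in general: it is equivalent to the identity operator not being compressible, and the identity \emph{is} compressible on any space isomorphic to its square (e.g.\ $\ell_p$), so some specific property of $\J_p(\om_1)$ must be invoked. Concretely, your chain only yields $\ker\Lambda_p\subseteq\mathscr{C}(\J_p(\om_1))\subseteq\B(\J_p(\om_1))$; since $\ker\Lambda_p$ has codimension one, to conclude $\ker\Lambda_p=\mathscr{C}(\J_p(\om_1))$ you must rule out $\mathscr{C}(\J_p(\om_1))=\B(\J_p(\om_1))$, i.e.\ prove that $I_{\J_p(\om_1)}$ is not compressible. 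The paper does this via Willis's Proposition~2: the identity is compressible if and only if $\J_p(\om_1)^n\cong Z\oplus\J_p(\om_1)^{n+1}$ for some $n$, and such a decomposition is excluded using the codimension-one (character) property of $\ker\Lambda_p$. Note also that in your routing the inclusion $\ker\Lambda_p\subseteq\mathscr{M}_{\J_p(\om_1)}$ depends on this flawed step; that part is easy to repair without it (if $I=ATB$ with $T\in\WCG$, then $I\in\WCG$, contradicting that $\J_p(\om_1)$ is not weakly compactly generated), but the equality with $\mathscr{C}(\J_p(\om_1))$ genuinely requires the non-compressibility argument you omitted.
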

\begin{proof}
Let us start by showing that the ideal $\ker\Lambda_p$ is contained in each of the remaining sets in the desired equality. So, suppose $T\in\B(\J_p(\om_1))$ and $\langle e_\alpha^\ast,  Tx\rangle =0$ for every $x\in\J_p(\om_1)$ and $\alpha\in D$, where $D$ is a~certain club subset of $\om_1$. We may assume that $D\subseteq L(\om_1)$.

Define $Y$ to be the set of all sequences $x\in\J_p(\om_1)$ such that $x(\alpha)=0$ for every $\alpha\in D$. Obviously, the range of $T$ lies in $Y$. By Lemma \ref{Casazza}, the space $Y$ is in turn isomorphic to $(\bigoplus_{\alpha<\om_1}\w\J_p(\mbox{ord}\,O_\alpha))_{\ell_p}$, where $\{O_\alpha\colon \alpha<\omega_1\}$ is the family of consecutive order-components of $\omega_1\setminus D$ (this family is uncountable as $D\subset L(\omega_1)$) and $\mbox{ord}\,O_\alpha$ denotes the order type of $O_\alpha$ ($\alpha<\omega_1$). That space is isomorphic to a~complemented subspace of $\mathfrak{\w G}_p$ given by \eqref{gtilde}. Hence, by Proposition \ref{prop_long_James}, it is isomorphic to a~subspace of $\J_p(\om_1)$ and is, of course, weakly compactly generated. Thus, the inclusion $\ker\Lambda_p\subset\WCG(\J_p(\om_1))$ has been proved. Furthermore, $\Lambda_p(T)=0$ implies $T\in\mathscr{G}_{\mathfrak{G}_p}(\J_p(\om_1))$, as in this case the range of $T$ is contained in a subspace of $\mathfrak{G}_p$.

Now, observe that Proposition \ref{prop_long_James} yields $$\J_p(\om_1)\cong\mathfrak{G}_p\oplus\J_p(\om_1)\cong\mathfrak{G}_p\oplus\mathfrak{G}_p\oplus\J_p(\om_1)\cong\ldots ,$$thus every operator factoring through $\mathfrak{G}_p$ is compressible. Consequently, $\ker\Lambda_p\subset\mathscr{C}(\J_p(\om_1))$.

To show the last of the announced inclusions recall that $\ker\Lambda_p\subset\mathscr{W\!C\!G}(J_p(\om_1))$ and observe that the identity of $\J_p(\om_1)$ cannot factor through a~weakly compactly generated operator, as $J_p(\om_1)$ is not weakly compactly generated. Hence, $\ker\Lambda_p\subseteq\mathscr{M}_{\J_p(\om_1)}$.

Since $\ker\Lambda_p$ is a~maximal ideal, and the ideals $\WCG(\J_p(\om_1))$ and $\mathscr{G}_{\mathfrak{G}_p}(\J_p(\om_1))$ are clearly proper, we get the first two of the claimed equalities. 

To show the equality $\ker\Lambda_p=\mathscr{C}(\J_p(\om_1))$ we shall prove that the identity operator $I_{\J_p(\om_1)}$ is not compressible.

By \cite[Proposition 2]{williscompressible}, that would be the case if and only if for a~certain $n\in\N$ we had a~decomposition
\begin{equation}\label{dec}
\J_p(\om_1)^n\cong Z\oplus\J_p(\om_1)^{n+1}
\end{equation}
with some Banach space $Z$. However, as the ideal $\WCG(\J_p(\om_1))$ has codimension one in $\B(\J_p(\om_1))$ hence there is no such a decomposition. 

For the equality $\ker\Lambda_p=\mathscr{M}_{\J_p(\om_1)}$ it remains to prove the inclusion \lq\lq$\supset$\rq\rq, but this will follow from the fact that any operator $T$ with $\Lambda_p(T)\not=0$ must fix a~copy of $\J_p(\om_1)$ and, consequently, the identity factors through $T$. These two statements are contained in the remaining part of the proof.

Now, notice that since $\Lambda_p(T)=0$ implies $T\in\WCG(\J_p(\om_1))$, it implies also that $T$ does not fix a~copy of $\J_p(\om_1)$. The converse follows from Lemma \ref{r(T)}. Hence, each of the ideals listed in our assertion is just the set of operators not fixing a~copy of $\J_p(\om_1)$. We conclude here that $\ker\Lambda_p=\mathscr{M}_{\J_p(\om_1)}$ is the unique maximal ideal of $\mathscr{B}(J_p(\omega_1))$.\end{proof}

\begin{corollary}\label{rangeofwcg}
The range of a weakly compactly generated operator on $\J_p(\om_1)$ is contained in a complemented, $\mathsf{WCG}$ subspace of $\J_p(\om_1)$.
\end{corollary}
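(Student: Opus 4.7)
The plan is to sharpen the proof of the inclusion $\ker\Lambda_p\subseteq\WCG(\J_p(\om_1))$ established in Theorem~\ref{long_James}. Given $T\in\WCG(\J_p(\om_1))=\ker\Lambda_p$, I would fix a~club $D\subseteq L(\om_1)$ with $(Tx)(\alpha)=0$ for every $x\in\J_p(\om_1)$ and $\alpha\in D$, so that the range of $T$ lies in
\[Y=\bigl\{y\in\J_p(\om_1)\colon y\vert_D\equiv 0\bigr\}.\]
The argument inside the proof of Theorem~\ref{long_James} already shows that $Y$ is isomorphic to the $\ell_p$-direct sum of the separable spaces $\w\J_p(\mathrm{ord}\,O_\alpha)$, where $\{O_\alpha\}_{\alpha<\om_1}$ are the order-components of $\om_1\setminus D$, and hence is WCG. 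The only missing ingredient is therefore a~bounded projection from $\J_p(\om_1)$ onto $Y$.

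To construct it, I would adjoin $0$ to $D$ if necessary and---using that $D$ is closed and unbounded in $\om_1$---define the ceiling map
\[\p\colon [0,\om_1)\to D,\qquad \p(\alpha)=\min\bigl\{d\in D\colon d\geqslant\alpha\bigr\}.\]
Setting $(Qx)(\alpha)=x(\p(\alpha))$ for $x\in\J_p(\om_1)$, the resulting function $Qx$ is constant on each gap $(d^\prime,d^{\prime\prime}]$ between successive points of $D$ and agrees with $x$ on $D$. A case analysis at limit ordinals---splitting according to whether $\alpha\in D$ is a~limit point of $D$, $\alpha\in D$ is isolated from below in $D$, or $\alpha$ lies in the open complement of $D$---shows that $Qx$ is continuous on $[0,\om_1)$; moreover $Qx(\alpha)\to 0$ as $\alpha\to\om_1$ because $\p(\alpha)\geqslant\alpha$. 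Hence $Qx\in\J_p(\om_1)$.

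The key computation is the norm estimate $\n{Qx}_{p,0}\leqslant\n{x}_{p,0}$: for any finite partition $0\leqslant\alpha_0<\cdots<\alpha_n<\om_1$ the ceiling values $\p(\alpha_0)\leqslant\cdots\leqslant\p(\alpha_n)$ form a~non-decreasing sequence in $D$, and collapsing consecutive repetitions (which contribute zero to the $p$th-power sum) produces a~strictly increasing sequence in $D$ against which the same sum is an~admissible partial-variation sum for $x$. Since $\p$ is a~retraction onto $D$, we have $Q^2=Q$, and by construction $\ker Q=Y$; thus $I-Q$ is a~bounded projection of $\J_p(\om_1)$ onto $Y$, giving the desired complemented WCG subspace. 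The main technical obstacle is the bookkeeping for continuity and the norm bound; once these are settled, the corollary reduces entirely to what has already been proved in Theorem~\ref{long_James}.
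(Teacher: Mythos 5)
Your argument is correct, and it actually supplies a detail the paper leaves implicit: Corollary \ref{rangeofwcg} is stated without proof, the intended justification being the proof of Theorem \ref{long_James}, where it is shown that for $T\in\ker\Lambda_p$ the range of $T$ lies in $Y=\{x\in\J_p(\om_1)\colon x\vert_D\equiv 0\}$ and that $Y$ is $\mathsf{WCG}$ (being isomorphic, via the block estimates of Lemma \ref{Casazza}, to an $\ell_p$-sum of separable spaces sitting complementably in $\mathfrak{\w G}_p$); the complementedness of $Y$ in $\J_p(\om_1)$ is what the paper would extract from the same kind of partial-variation estimate as in Claim~2 of the proof of Proposition \ref{prop_long_James}. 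You instead exhibit an explicit contractive projection: the ceiling map $\p(\alpha)=\min\{d\in D\colon d\geqslant\alpha\}$ is non-decreasing and idempotent, so for $Qx=x\circ\p$ any admissible partition produces a non-decreasing sequence $\p(\alpha_0)\leqslant\ldots\leqslant\p(\alpha_n)$ in $D$ whose repeated values contribute zero, giving $\n{Qx}_{p,0}\leqslant\n{x}_{p,0}$; closedness of $D$ makes $Qx$ eventually constant to the left of any limit ordinal not in (or isolated in) $D$, so $Qx$ is continuous and vanishes at $\om_1$ since $\p(\alpha)\geqslant\alpha$; hence $I-Q$ is a bounded projection onto $\ker Q=Y$. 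This is very much in the spirit of the composition-type projections used for $C[0,\om_1]$ in \cite{kaniakoszmiderlaustsen} and is a perfectly sound, arguably cleaner, route; what it buys is an explicit norm-one (in $\n{\cdot}_{p,0}$) complementation of the concrete subspace $Y$ rather than an appeal to block-basis bookkeeping. Two small caveats: do \emph{not} actually adjoin $0$ to $D$ --- the ceiling map needs no such adjustment, and if you enlarged $D$ then $\ker Q$ would additionally force vanishing at $0$, which the range of $T$ need not satisfy; and your continuity check at a limit ordinal $\alpha\notin D$ uses precisely that $D$ is closed, so that $\sup(D\cap\alpha)<\alpha$ --- worth stating explicitly.
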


Let us say that a sequence $(A_\xi)_{\xi <\omega_1}$ of subsets of $\omega_1$ is \emph{skipped} if for each pair $\xi_1< \xi_2<\omega_1$ we have $\sup A_{\xi_1}+1<\min A_{\xi_2}$. A sequence $(f_\xi)_{\xi<\omega_1}$ of functions defined on $\omega_1$ is \emph{skipped} provided the sequence $(\mbox{supp}\,f_\xi)_{\xi<\omega_1}$ is skipped. 

The next lemma is an easy consequence of Lemma \ref{Casazza}.
\begin{lemma}\label{skipped}For each $p\in (1,\infty)$ any normalised skipped sequence  $(f_\xi)_{\xi<\omega_1}$ in $\mathcal{J}_p(\omega_1)$ is equivalent to the canonical basis of $\ell_p(\omega_1)$.\end{lemma}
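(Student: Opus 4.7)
The plan is to reduce the claim to a direct application of Lemma~\ref{Casazza}. Fix a finite subset $\xi_1<\ldots<\xi_k$ of $\om_1$ together with scalars $t_1,\ldots,t_k\in\K$, and set $y=\sum_{i=1}^k t_if_{\xi_i}$. Because the sequence is skipped, the supports $A_{\xi_i}=\supp f_{\xi_i}$ are pairwise separated by at least one ordinal on which every $f_{\xi_j}$ vanishes. The proof of Lemma~\ref{Casazza} is built from just two ingredients --- the convexity bound $\abs{t-u}^p\leqslant 2^{p-1}(\abs{t}^p+\abs{u}^p)$ and the existence of a zero between adjacent blocks --- so it applies to the decomposition $y=\sum_{i=1}^k(t_if_{\xi_i})$ exactly as to a linear combination of basis vectors. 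This yields
\[
\sum_{i=1}^k \abs{t_i}^p\n{f_{\xi_i}}_{p,0}^p \;\leqslant\; \n{y}_{p,0}^p \;\leqslant\; 2^{p-1}\sum_{i=1}^k \abs{t_i}^p\n{f_{\xi_i}}_{p,0}^p.
\]

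Next I would translate between the two norms $\n{\cdot}_{p,0}$ and $\n{\cdot}_{\J_p}$ using the equivalence $2^{-1/p}\n{x}_{p,0}\leqslant\n{x}_{\J_p}\leqslant 2^{1/p}\n{x}_{p,0}$ recorded earlier. Because the sequence is normalised, $\n{f_{\xi_i}}_{\J_p}=1$, and thus $2^{-1/p}\leqslant\n{f_{\xi_i}}_{p,0}\leqslant 2^{1/p}$ for every $i$. Combining this uniform two-sided estimate with the block inequality above produces constants $c,C>0$ depending only on $p$ such that
\[
c\,\Bigl(\sum_{i=1}^k\abs{t_i}^p\Bigr)^{1/p} \;\leqslant\; \Bigl\|\sum_{i=1}^k t_if_{\xi_i}\Bigr\|_{\J_p} \;\leqslant\; C\,\Bigl(\sum_{i=1}^k\abs{t_i}^p\Bigr)^{1/p},
\]
which is precisely the assertion that $(f_\xi)_{\xi<\om_1}$ is equivalent to the canonical basis of $\ell_p(\om_1)$.

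The only genuine point to check is that the conclusion of Lemma~\ref{Casazza}, which is formally phrased for $\K$-linear combinations of basis vectors $e_{\gamma_{i,j}}$, still applies when each block $t_if_{\xi_i}$ may have countably infinite support. For the lower bound one concatenates near-optimal test-sequences taken inside each $A_{\xi_i}$; since the blocks are separated, the concatenation is an admissible increasing sequence for $\n{y}_{p,0}$, and the cross-block edges only contribute nonnegative summands. For the upper bound, given any admissible sequence $0\leqslant\alpha_0<\ldots<\alpha_n<\om_1$, one inserts a single gap ordinal between every pair of consecutive blocks where $y$ vanishes; the convexity bound replaces each block-crossing edge by two single-block edges, producing the overall factor $2^{p-1}$. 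This is the only subtlety in the argument; everything else is routine bookkeeping.
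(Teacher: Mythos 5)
Your proposal is correct and follows essentially the paper's intended route: the paper derives Lemma~\ref{skipped} directly from Lemma~\ref{Casazza}, and your block-by-block argument (gap ordinals plus the convexity bound $\abs{t-u}^p\leqslant 2^{p-1}(\abs{t}^p+\abs{u}^p)$, followed by the comparison of $\n{\cdot}_{p,0}$ with $\n{\cdot}_{\J_p}$ and the normalisation $2^{-1/p}\leqslant\n{f_{\xi_i}}_{p,0}\leqslant 2^{1/p}$) is exactly that deduction, carried out for general blocks instead of single basis vectors. One wording fix for the lower bound: near-optimal test sequences for $\n{f_{\xi_i}}_{p,0}$ may need ordinals at which $f_{\xi_i}$ vanishes (e.g.\ for normalised indicators of intervals the variation computed only at support points is $0$), so they should be allowed to run over the interval spanned by $A_{\xi_i}$ together with the adjacent gap ordinals --- where the skipped condition guarantees that the whole combination $\sum_i t_if_{\xi_i}$ vanishes as well --- rather than literally inside $A_{\xi_i}$; with that adjustment your concatenation argument goes through verbatim.
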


For each $\sigma<\omega_1$ let us define $P_\sigma f = f\cdot \mathbf{1}_{[0,\sigma]},\, f\in \mathcal{J}_p(\omega_1)$. We note that $P_\sigma$ is a well-defined contractive projection on $\mathcal{J}_p(\omega_1)$. The proof of the next theorem is based on ideas from the proofs of \cite[Theorem 1.3, Lemma 4.3]{kanialaustsen}.
\begin{theorem}\label{thmseprange}
  Let $p\in (1,\infty)$. The following assertions are equivalent for an operator $T$ on
  $\mathcal{J}_p(\omega_1)\colon$
  \begin{alphenumerate}
  \item\label{thmseprange2e} $T = TP_\sigma$ for some countable ordinal~$\sigma;$
  \item\label{thmseprange2g}
    $T\in\mathscr{G}_{\mathcal{J}_p(\sigma)}(\J_p(\om_1))$ for some
    countable ordinal~$\sigma;$
  \item\label{thmseprange2f}
    $T\in\overline{\mathscr{G}}_{\mathcal{J}_p(\sigma)}(\J_p(\om_1))$ for
    some countable ordinal~$\sigma;$
  \item\label{thmseprange2a} $T\in\mathscr{X}(\J_p(\om_1));$
  \item\label{thmseprange2d} $T$ does not fix a copy of~$\ell_p(\omega_1)$.
  \end{alphenumerate}
\end{theorem}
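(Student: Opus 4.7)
Plan. I aim to prove the cycle $(a) \Rightarrow (b) \Rightarrow (c) \Rightarrow (d) \Rightarrow (e) \Rightarrow (a)$. The first four implications are routine. For $(a) \Rightarrow (b)$, the equality $T = TP_\sigma$ shows that $T$ factorises through $\J_p(\sigma+1)$ via the corestriction $\widetilde{P}_\sigma \colon \J_p(\om_1) \to \J_p(\sigma+1)$ of $P_\sigma$ followed by the inclusion $\iota_\sigma \colon \J_p(\sigma+1) \hookrightarrow \J_p(\om_1)$. The implication $(b) \Rightarrow (c)$ is immediate. For $(c) \Rightarrow (d)$, $\J_p(\sigma)$ is separable for countable $\sigma$, so operators factoring through it have separable range, and the class $\X$ of separable-range operators is closed. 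For $(d) \Rightarrow (e)$, $\ell_p(\om_1)$ is non-separable and hence cannot embed isomorphically into any separable subspace of $\J_p(\om_1)$.

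The key direction is $(e) \Rightarrow (a)$, which I prove by contrapositive. Assume $T \neq TP_\sigma$ for every $\sigma < \om_1$; I will show $T$ fixes a copy of $\ell_p(\om_1)$. First, $c := \inf_{\sigma < \om_1} \n{T(I - P_\sigma)} > 0$: otherwise pick countable $\sigma_n$ with $\n{T(I - P_{\sigma_n})} \to 0$ and let $\sigma^\ast = \sup_n \sigma_n < \om_1$; since the image of $I - P_{\sigma^\ast}$ lies in that of each $I - P_{\sigma_n}$, one has $T(I - P_{\sigma^\ast}) = 0$, contradicting the standing assumption. Next, $\Lambda_p(T) = 0$: otherwise Theorem~\ref{long_James} ensures that $T$ fixes a copy of $\J_p(\om_1)$, and by Lemma~\ref{skipped} this copy contains a copy of $\ell_p(\om_1)$ (spanned by any skipped normalised sequence), forcing $T$ to fix $\ell_p(\om_1)$. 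Consequently, by Theorem~\ref{Koszmider_modified} there is a club $D \subseteq \om_1$ with $T^\ast e_\alpha^\ast = 0$ for every $\alpha \in D$.

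With these reductions in hand, I build by transfinite recursion on $\xi < \om_1$ a skipped sequence $(f_\xi)_{\xi < \om_1}$ of norm-one vectors in $\J_p(\om_1)$ such that $(Tf_\xi)_{\xi < \om_1}$ is also skipped and $\n{Tf_\xi}$ is bounded below by a positive constant independent of $\xi$. At stage $\xi$, choose a countable ordinal $\rho_\xi$ majorising the (countable) supports of all $f_\eta$ and $Tf_\eta$ for $\eta < \xi$. The condition $\supp(Tf_\xi) \subseteq (\rho_\xi, \om_1)$ amounts to $\langle f_\xi, T^\ast e_\alpha^\ast \rangle = 0$ for $\alpha \in [0, \rho_\xi]$; since $T^\ast e_\alpha^\ast = 0$ for $\alpha \in D$, only the countably many constraints coming from $\alpha \in [0, \rho_\xi] \setminus D$ are nontrivial. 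I select $f_\xi$ of norm one in $(I - P_{\rho_\xi}) \J_p(\om_1)$ meeting these constraints, with $\n{Tf_\xi}$ comparable to $c$. Once both sequences are skipped, Lemma~\ref{skipped} yields that $(f_\xi)$ and (after renormalisation) $(Tf_\xi)$ are equivalent to the canonical basis of $\ell_p(\om_1)$; hence $T$ is bounded below on $\overline{\span}\{f_\xi : \xi < \om_1\} \simeq \ell_p(\om_1)$, contradicting~(e).

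The main obstacle is the inductive step: after intersecting $(I - P_{\rho_\xi}) \J_p(\om_1)$ with countably many hyperplanes $\ker(T^\ast e_\alpha^\ast)$, one must still produce a unit vector on which $T$ acts with norm a fixed fraction of $c$. The uniform bound $\n{T(I - P_{\rho_\xi})} \geq c$ need not descend to this intersection. Following the strategy of \cite[Lemma~4.3]{kanialaustsen}, I would exploit the weak$^\ast$-sequential compactness of the unit ball of $\J_p(\om_1)^\ast$ (Hagler--Johnson) to extract a weak$^\ast$-convergent subsequence of the relevant functionals and then carry out a bounded perturbation of the naive candidate $(I - P_{\rho_\xi}) u_\xi / \n{(I - P_{\rho_\xi}) u_\xi}$ that both satisfies the constraints and keeps $\n{Tf_\xi}$ uniformly bounded below.
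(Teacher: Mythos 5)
Your reductions up to the decisive point are correct and run parallel to the paper's own argument: the easy implications are handled the same way, your proof that $c=\inf_{\sigma<\om_1}\n{T(I-P_\sigma)}>0$ is exactly the paper's opening claim (its $\delta$ is your $c$), and the endgame via Lemma \ref{skipped} applied to a normalised skipped sequence $(f_\xi)_{\xi<\om_1}$ with $\n{Tf_\xi}$ uniformly bounded below is also the paper's. Your additional reductions (eliminating the case $\Lambda_p(T)\neq 0$ through Lemma \ref{r(T)}, and invoking Theorem \ref{Koszmider_modified} to obtain a club $D$ with $T^\ast e_\alpha^\ast=0$ for $\alpha\in D$) are sound, and your instinct that one must also control the supports of the images $Tf_\xi$ is right: $\n{Tf_\xi}\geqslant\delta$ together with $(f_\xi)$ skipped does not by itself make $T$ bounded below on $\oo\span\{f_\xi\colon\xi<\om_1\}$. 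This extra control is precisely what the paper takes over from the recursive construction in \cite[Theorem 1.3, Lemma 4.3]{kanialaustsen}.

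However, the proposal stops exactly where the real work begins, and you say so yourself. At stage $\xi$ you must produce a norm-one $f_\xi$ supported in $(\rho_\xi,\om_1)$, lying in the kernels of the countably many functionals $T^\ast e_\alpha^\ast$ with $\alpha\leqslant\rho_\xi$, and still satisfying $\n{Tf_\xi}\geqslant c'$ for a constant $c'>0$ independent of $\xi$. You correctly note that the bound $\n{T(I-P_{\rho_\xi})}\geqslant c$ need not survive intersection with these countably many hyperplanes, and then you offer only an unexecuted plan (``I would exploit the weak$^\ast$-sequential compactness \dots and then carry out a bounded perturbation''), with no construction and no estimate. Note, for instance, that the natural way to realise your plan would require something like $\n{P_\tau T(I-P_\rho)}\to 0$ as $\rho\to\om_1$ for each fixed countable $\tau$, so that the ``old'' part of $Tf_\xi$ can be removed by a small perturbation; this is not established, and it does not follow from soft facts (far-supported skipped unit vectors are only weakly null, which gives weak, not norm, smallness of $P_\tau Tf$). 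Since the whole implication (e)$\Rightarrow$(a) rests on this inductive step, the proposal as written does not prove the theorem: it reproduces the paper's framework up to the point where the paper appeals to the detailed Kania--Laustsen construction, and leaves that key step as a declared intention rather than a proof.
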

\begin{proof}The implications (\ref{thmseprange2e}) $\Rightarrow$ (\ref{thmseprange2g}) $\Rightarrow$ (\ref{thmseprange2f}) $\Rightarrow$ (\ref{thmseprange2a}) $\Rightarrow$ (\ref{thmseprange2d}) are clear. 

Assume contrapositively that (\ref{thmseprange2e}) fails. We claim that there is $\delta>0$ such that for each $\xi<\omega_1$, there is $f_\xi\in \mathcal{J}_p(\omega_1)$ with $\mbox{supp}f_\xi \subseteq (\xi, \omega_1),\, \|Tf\|\geqslant \delta$ and $\|f\|\leqslant 1$. Suppose this is not the case. Then, for $\delta_n=1/n$ we obtain a sequence $(\xi_n)_{n\in \mathbb{N}}$ of countable ordinals such that $\|Tf\|<1/n$ for each $f\in \mathcal{J}_p(\omega_1)$ with $\mbox{supp}f\subseteq (\xi_n, \omega_1)$. Let $\xi = \sup\{\xi_n\colon n\in \mathbb{N}\}$. Certainly, $\xi<\omega_1$. Take $g\in\mathcal{J}_p(\omega_1)$ with $\|(I - P_\xi)g\|\leqslant 1$. Letting $f=(I-P_\xi)g$ we infer that $\mbox{supp}f \subseteq (\xi, \omega_1)=\bigcap_{n\in\mathbb{N}}(\xi_n, \omega_1)$ as $P_\xi(I-P_\xi)=0$. Thus, $\|Tf\|<1/n$ for each $n\in \mathbb{N}$, so $0=Tf=T(I-P_\xi)g$, which proves $T=TP_\xi$, against the assumption.

Similarly, as in the proof \cite[Theorem 1.3]{kanialaustsen}, we choose inductively a normalised skipped sequence $(f_\xi)_{\xi<\omega_1}$ with $\|Tf_\xi\|\geqslant \delta$, where $\delta$ is as above. By Lemma \ref{skipped}, the subspace $X=\overline{\mbox{span}}\{f_\xi\colon \xi<\omega_1\}$ is isomorphic to $\ell_p(\omega_1)$. We note that $T|_X$ is bounded below, hence the proof of the implication (\ref{thmseprange2d}) $\Rightarrow$ (\ref{thmseprange2e}) is complete. \end{proof}

An element $x$ of an algebra $\mathscr{A}$ is a \emph{commutator} if there exist some $a,b\in \mathscr{A}$ such that $x = ab - ba$. It is well-known that if $\mathscr{A}$ is a unital Banach algebra, then its unit cannot be a commutator. The following result is the counterpart for $\J_p(\om_1)$ of \cite[Theorem 4.6]{laustsencommutator} and \cite[Theorem 5.1]{kaniakoszmiderlaustsen} for $C[0,\omega_1]$.

\begin{theorem}\label{commutator}Each operator in $\mathscr{W\!C\!G}(\mathcal{J}_p(\omega_1))$ is a sum of three commutators.\end{theorem}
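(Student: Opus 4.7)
The strategy is to adapt the ``three commutators'' arguments used for the classical James space in \cite[Theorem~4.6]{laustsencommutator} and for $C[0,\om_1]$ in \cite[Theorem~5.1]{kaniakoszmiderlaustsen}. Three ingredients from this section combine to make the adaptation run: every $T\in\WCG(\J_p(\om_1))$ factors through $\mathfrak{G}_p$ by Theorem~\ref{long_James}; the space $\mathfrak{G}_p$ is isomorphic to $\ell_p(\N,\mathfrak{G}_p)$ by Lemma~\ref{l_p-suma}; and $\J_p(\om_1)\simeq\mathfrak{G}_p\oplus\J_p(\om_1)$ by Proposition~\ref{prop_long_James}, which iterates to the identification $\J_p(\om_1)\simeq\ell_p(\N,\mathfrak{G}_p)\oplus\J_p(\om_1)$.

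The plan is as follows. Fix $T\in\WCG(\J_p(\om_1))$ and factor it as $T=BA$ with $A\colon\J_p(\om_1)\to\mathfrak{G}_p\simeq\ell_p(\N,\mathfrak{G}_p)$ and $B\colon\ell_p(\N,\mathfrak{G}_p)\simeq\mathfrak{G}_p\to\J_p(\om_1)$. Identifying $\J_p(\om_1)$ with $\ell_p(\N,\mathfrak{G}_p)\oplus Z$ (where $Z\simeq\J_p(\om_1)$), I would introduce a right shift $R\in\B(\J_p(\om_1))$ carrying the $n$th copy of $\mathfrak{G}_p$ isomorphically onto the $(n+1)$st and vanishing on $Z$, together with a left shift $L$ satisfying $LR=P$, the projection onto $\ell_p(\N,\mathfrak{G}_p)$. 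Letting $P_n$ be the projection onto the $n$th copy of $\mathfrak{G}_p$, I would decompose $T=\sum_{n\in\N}P_n T$ and, by a telescoping argument using $R$ and $L$, write this sum as $[R,X]+Y$, where $Y$ has range contained in the direct sum of the first copy of $\mathfrak{G}_p$ and $Z$. The first commutator is $[R,X]$; the remaining operator $Y$, whose range lies in a complemented copy of $\J_p(\om_1)$, would then be written as a sum of two more commutators by a second application of the same shift trick, this time exploiting the isomorphism $\J_p(\om_1)\simeq\J_p(\om_1)\oplus\J_p(\om_1)$ implicit in Proposition~\ref{prop_long_James} and Lemma~\ref{l_p-suma}.

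The hard part will be ensuring norm convergence of the telescoping series that defines $X$. The naive Apostol-style series such as $X=\sum_{n\geqslant 0}R^n(P_n T)L^n$ converges only in the strong operator topology, not in norm; absolute convergence must be engineered by grouping consecutive $\mathfrak{G}_p$-summands into blocks and passing to a weighted shift whose iterates have geometrically decaying norm, a device common to both of the papers cited above and the reason that one does not obtain a single commutator. A secondary subtlety, absent from the reflexive setting of Laustsen's paper, is that the extra summand $Z\simeq\J_p(\om_1)$ is itself not $\mathsf{WCG}$; since $R$ and $L$ have been arranged to vanish on $Z$, however, the $Z$-part of $T$ is entirely absorbed into $Y$ and never interferes with the shift machinery.
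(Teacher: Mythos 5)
Your high-level skeleton is the same as the paper's: identify $\WCG(\J_p(\om_1))$ with $\mathscr{G}_{\mathfrak{G}_p}(\J_p(\om_1))$ via Theorem~\ref{long_James}, and exploit Lemma~\ref{l_p-suma} ($\mathfrak{G}_p\simeq(\bigoplus_{n\in\N}\mathfrak{G}_p)_{\ell_p}$) together with Proposition~\ref{prop_long_James}. The paper, however, finishes in two lines by quoting Laustsen: every operator on a space isomorphic to the $\ell_p$-sum of countably many copies of itself is a sum of \emph{two} commutators \cite[Proposition~3.7]{laustsencommutator}, and every operator factoring through such a space is a sum of \emph{three} \cite[Lemma~4.5]{laustsencommutator}. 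You instead try to redo the shift arguments by hand, and it is exactly there that your sketch breaks down.

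The decisive error is the isomorphism $\J_p(\om_1)\simeq\J_p(\om_1)\oplus\J_p(\om_1)$ on which your last step rests: it is false, and is not ``implicit'' in Proposition~\ref{prop_long_James} or Lemma~\ref{l_p-suma}. If it held, the two complementary projections would each have range isomorphic to $\J_p(\om_1)$, so the identity would factor through each of them; by Theorem~\ref{long_James} ($\ker\Lambda_p=\mathscr{M}_{\J_p(\om_1)}$) both projections would then satisfy $\Lambda_p(P_i)=1$, giving $\Lambda_p(I)=2$, a contradiction. (The paper makes the same point when it observes that no decomposition $\J_p(\om_1)^n\cong Z\oplus\J_p(\om_1)^{n+1}$ exists; take $n=1$, $Z=\{0\}$.) Relatedly, reducing to a remainder $Y$ ``whose range lies in a complemented copy of $\J_p(\om_1)$'' is no reduction at all -- every operator has that property -- and such a $Y$ cannot in general be a sum of commutators, since $\Lambda_p$ is a bounded trace annihilating all commutators; what must be used is that $Y$ still factors through $\mathfrak{G}_p$, i.e.\ you are back at the original problem unless you invoke a Halmos-type two-commutator theorem \emph{on $\mathfrak{G}_p$} and then transfer it across the factorisation (this is precisely the division of labour between Proposition~3.7 and Lemma~4.5 in \cite{laustsencommutator}). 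Finally, your convergence ``fix'' is not the device of the cited papers and does not work: replacing $R$ by a weighted shift with geometrically decaying iterates forces inverse weights into $X$ to recover the unweighted pieces of $T$, undoing the decay. The standard mechanism avoids operator-norm series altogether: one uses the bounded inflation $\Theta(S)=\operatorname{diag}(S,S,S,\dots)$ on $(\bigoplus_{n\in\N}\mathfrak{G}_p)_{\ell_p}$ and the exact identity $[L,R\,\Theta(S)]=\operatorname{diag}(S,0,0,\dots)$, all operators being defined blockwise with boundedness coming from the $\ell_p$-sum structure; the count ``three'' reflects this structural two-plus-one scheme, not a summability obstruction. (A smaller, fixable omission: you tacitly assume the range of $T$ sits inside the distinguished copy of $(\bigoplus_{n\in\N}\mathfrak{G}_p)_{\ell_p}$, which needs Corollary~\ref{rangeofwcg} plus a conjugation, or simply working with a factorisation $T=BA$ through $\mathfrak{G}_p$.)
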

\begin{proof}We have that $\mathscr{W\!C\!G}(\mathcal{J}_p(\omega_1))=\mathscr{G}_{\mathfrak{G}_p}(\J_p(\omega_1))$ and, by Lemma \ref{l_p-suma}, the space $\mathfrak{G}_p$ is isomorphic to its $\ell_p$-sum, so \cite[Proposition 3.7]{laustsencommutator} yields that each operator on $\mathfrak{G}_p$ is a sum of two commutators. Consequently, it follows from \cite[Lemma 4.5]{laustsencommutator} that each operator which factors through $\mathfrak{G}_p$ is a sum of three commutators.\end{proof}

Let $\vartheta\colon \mathscr{A}\to \mathscr{C}$ be a homomorphism between Banach algebras. We call
\[\mathcal{I}:=\{a\in \mathscr{A}\colon \mbox{the maps }b\mapsto \vartheta(ab),\,b\mapsto \vartheta(ba)\mbox{ are continuous}\}\]
\emph{the continuity ideal} of $\vartheta$. Certainly, $\mathcal{I}$ is a two-sided ideal of $\mathscr{A}$. Suppose $\mathscr{A}$ is an ideal of $\mathscr{B}(E)$, where $E$ is some Banach space. Willis \cite[Proposition 7]{williscompressible} proved that $\mathscr{A}\cdot\mathscr{C}(E)\cdot\mathscr{A}\subseteq \mathcal{I}$ (recall that $\mathscr{C}(E)$ stands for the ideal of compressible operators). Furthermore, he used this fact to prove that each homomorphism from $\mathscr{B}(\mathcal{J}_2)$ is continuous (in fact, his argument extends to arbitrary $p\in (1,\infty)$). A key-ingredient in Willis' proof is existence of a bounded right approximate identity in the ideal of weakly compact operators on the James space; we shall prove that every homomorphism from $\mathscr{B}(\mathcal{J}_p(\omega_1))$ is continuous without appealing to those type of results.
\begin{theorem}Every homomorphism from $\mathscr{B}(\mathcal{J}_p(\omega_1))$ is automatically continuous.\end{theorem}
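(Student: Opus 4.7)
The plan is to combine Willis's continuity-ideal method with a norm-controlled single-product factorisation of the elements of $\WCG(\J_p(\om_1))$, thereby bypassing the need for a bounded approximate identity in the ideal. Let $\vartheta\colon\B(\J_p(\om_1))\to\mathscr{C}$ be a homomorphism of algebras into a Banach algebra $\mathscr{C}$, and let $\mathcal{I}$ denote its continuity ideal. Willis's Proposition~7 applied with $\mathscr{A}=\B(\J_p(\om_1))$, together with the identification $\mathscr{C}(\J_p(\om_1))=\WCG(\J_p(\om_1))$ from Theorem~\ref{long_James}, yields $\WCG(\J_p(\om_1))\subseteq\mathcal{I}$; hence for every $U\in\WCG(\J_p(\om_1))$, the maps $V\mapsto\vartheta(UV)$ and $V\mapsto\vartheta(VU)$ on $\B(\J_p(\om_1))$ are continuous.

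Restricting both variables to the Banach space $\WCG(\J_p(\om_1))$, the bilinear map $B(U,V)=\vartheta(UV)$ is thus separately continuous, so the Banach--Steinhaus uniform boundedness principle upgrades~$B$ to joint continuity: there exists $K\geqslant 0$ with $\|\vartheta(UV)\|\leqslant K\|U\|\|V\|$ for all $U,V\in\WCG(\J_p(\om_1))$.

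The main obstacle is to exhibit, for every $T\in\WCG(\J_p(\om_1))$, a single-product factorisation $T=UV$ with $U,V\in\WCG(\J_p(\om_1))$ and $\|U\|\|V\|\leqslant C\|T\|$ for a constant $C$ independent of~$T$. By Theorem~\ref{long_James} one has $\WCG(\J_p(\om_1))=\mathscr{G}_{\mathfrak{G}_p}(\J_p(\om_1))$, so the canonical continuous linear surjection $\B(\J_p(\om_1),\mathfrak{G}_p)\hat\otimes_\pi\B(\mathfrak{G}_p,\J_p(\om_1))\to\WCG(\J_p(\om_1))$, $S\otimes R\mapsto RS$, is open by the open mapping theorem; this yields a representation $T=\sum_i R_i S_i$ with $\sum_i\|R_i\|\|S_i\|\leqslant C_0\|T\|$. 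The isomorphism $\mathfrak{G}_p\cong\ell_p(\N,\mathfrak{G}_p)$ from Lemma~\ref{l_p-suma} then permits collapsing this series into a single product $T=RS$ with $R\in\B(\mathfrak{G}_p,\J_p(\om_1))$ and $S\in\B(\J_p(\om_1),\mathfrak{G}_p)$, by rebalancing the factors with weights chosen so that H\"older's inequality delivers $\|R\|\|S\|\leqslant\sum_i\|R_i\|\|S_i\|\leqslant C_0\|T\|$. Composing with the complemented inclusion $\iota\colon\mathfrak{G}_p\to\J_p(\om_1)$ and projection $\pi\colon\J_p(\om_1)\to\mathfrak{G}_p$ from Proposition~\ref{prop_long_James} gives $U=R\pi$ and $V=\iota S$ in $\WCG(\J_p(\om_1))$ with $T=UV$ and $\|U\|\|V\|\leqslant C_0\|\pi\|\|\iota\|\|T\|$.

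Combining the joint-continuity estimate with this factorisation produces $\|\vartheta(T)\|\leqslant KC_0\|\pi\|\|\iota\|\|T\|$ for every $T\in\WCG(\J_p(\om_1))$, so $\vartheta|_{\WCG(\J_p(\om_1))}$ is continuous. Since $\Lambda_p$ is a continuous character, $\B(\J_p(\om_1))=\K I\oplus\WCG(\J_p(\om_1))$ is a topological direct sum, and continuity of~$\vartheta$ on each summand now yields continuity of~$\vartheta$ on all of $\B(\J_p(\om_1))$.
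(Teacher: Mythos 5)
Your proof is correct, but the crucial step is genuinely different from the paper's. Both arguments start the same way: Willis's Proposition~7 together with the identification $\WCG(\J_p(\om_1))=\mathscr{C}(\J_p(\om_1))$ from Theorem~\ref{long_James} gives $\WCG(\J_p(\om_1))\subseteq\mathcal{I}$, and continuity of the character $\Lambda_p$ reduces everything to continuity of $\vartheta$ on the codimension-one ideal. The paper then argues qualitatively: given an arbitrary sequence $(T_n)$ of $\mathsf{WCG}$ operators it builds, via Corollary~\ref{rangeofwcg} and Lemma~\ref{l_p-suma}, a single $\mathsf{WCG}$ operator $P$ with $T_n=PT_n$ for all $n$, and membership of $P$ in the continuity ideal then forces boundedness of $\vartheta$ on the ideal. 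You instead prove a quantitative two-sided factorisation: every $T\in\ker\Lambda_p$ is $UV$ with $U,V\in\WCG(\J_p(\om_1))$ and $\n{U}\,\n{V}\leqslant C\n{T}$, obtained from the open mapping theorem applied to the canonical surjection of the projective tensor product onto $\ker\Lambda_p=\mathscr{G}_{\mathfrak{G}_p}(\J_p(\om_1))$, followed by the weighted H\"older collapsing of $\sum_i R_iS_i$ into one product through $\mathfrak{G}_p\simeq\ell_p(\N,\mathfrak{G}_p)$ (Lemma~\ref{l_p-suma}) and the complemented copy of $\mathfrak{G}_p$ from Proposition~\ref{prop_long_James}; combined with joint continuity (via uniform boundedness) of the separately continuous bilinear map $(U,V)\mapsto\vartheta(UV)$, this yields the explicit estimate $\n{\vartheta(T)}\leqslant KC\n{T}$. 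Both routes are sound and rest on the same structural inputs; yours buys an explicit bound for $\n{\vartheta|_{\WCG}}$ and avoids constructing a common left factor for a sequence, at the price of invoking tensor products and two applications of Baire-category tools, whereas the paper's argument is more elementary and parallels the $C[0,\om_1]$ techniques it builds on. One point you should make explicit: the open mapping argument needs the target to be complete and the tensor map to be onto it, and both facts come from $\mathscr{G}_{\mathfrak{G}_p}(\J_p(\om_1))$ coinciding with the closed ideal $\ker\Lambda_p$ (so finite sums of products already exhaust it) --- this is exactly Theorem~\ref{long_James}, which you cite, so the gap is only expository.
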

\begin{proof}Let $\mathscr{C}$ be a Banach algebra and let $\vartheta\colon \mathscr{B}(\mathcal{J}_p(\omega_1))\to \mathscr{C}$ be a homomorphism. As $\mathscr{W\!C\!G}(\mathcal{J}_p(\omega_1))$ is equal to the ideal of compressible operators on $\mathcal{J}_p(\omega_1)$, the aforementioned result of Willis yields that $\mathscr{W\!C\!G}(\mathcal{J}_p(\omega_1))$ is contained in the continuity ideal $\mathcal{I}$ of $\vartheta$.

Since the ideal of weakly compactly generated operators on $\mathcal{J}_p(\omega_1)$ is of codimension one in $\mathscr{B}(\J_p(\om_1))$, it is sufficient to prove that $\vartheta$ restricted to $\mathscr{W\!C\!G}(\mathcal{J}_p(\omega_1))$ is continuous.

Let $(T_n)_{n=1}$ be a sequence of $\mathsf{WCG}$ operators on $\J_p(\om_1)$. We shall exhibit a weakly compactly generated operator $P$ such that $T_n = PT_n$ ($n\in \mathbb{N}$), which by definition of the continuity ideal, would complete the proof.

For each $n\in \mathbb{N}$ there is a weakly compactly generated subspace $X_n$ of $\mathcal{J}_p(\omega_1)$, containing the image of $T_n$ and which is, by Corollary \ref{rangeofwcg}, isomorphic to $\mathfrak{G}_p$. Let $X = \left( \bigoplus_{n\in \mathbb{N}}X_n \right)_{\ell_p}$. By Lemma \ref{l_p-suma}, $X$ isomorphic to $\mathfrak{G}_p$, so it is also $\mathsf{WCG}$. Define $J\colon X\to \mathcal{J}_p(\omega_1)$ by $J(x_n)_{n=1}^\infty = \sum_{n=1}^\infty x_n/n^p$. Let $U\colon Y\to X$ be an isomorphism, where $Y$ is a complemented subspace of $\mathcal{J}_p(\omega_1)$, isomorphic to $\mathfrak{G}_p$. Let $V=JU$, $V\colon Y\to \mathcal{J}_p(\omega_1)$. Because $Y$ is complemented in $\J_p(\om_1)$, we may extend $V$ to $\mathcal{J}_p(\omega_1)$; let $V$ still stand for any fixed such an extension. Plainly, $V\in \mathscr{W\!C\!G}(\mathcal{J}_p(\omega_1))$, so $\Lambda_p(V)=0$. Thus, there is a complemented copy $Z$ of $\mathfrak{G}_p$ containing the range of $V$. Let $P$ be a projection onto $Z$. It remains to notice that 
\[\bigcup_{n\in \mathbb{N}}\mbox{im}\,T_n\subseteq \bigcup_{n\in \mathbb{N}}X_n\subseteq \mbox{im}\,T=\mbox{im}\,V\subseteq Z.\]
In particular, $T_k=PT_k$ for each $k\in \mathbb{N}$.\end{proof}

Now, consider the complex version of $\mathcal{J}_p(\omega_1)$ only. Recall that a linear (not necessarily bounded) functional $\tau$ on a complex algebra $\mathscr{A}$ is a trace if $\tau(ab)=\tau(ba)$ ($a,b\in \mathscr{A}$). It follows from Theorem \ref{commutator} that a linear functional $\tau$ on $\mathscr{B}(\mathcal{J}_p(\omega_1))$ is a trace if and only if it satisfies the equation $\tau=\tau(I)\Lambda_p$. Thus, $\Lambda_p$ is the only normalised trace on $\mathscr{B}(\mathcal{J}_p(\omega_1))$. Laustsen proved that the $K_0$-group of $\mathscr{B}(\mathcal{J}_p)$ is isomorphic to the additive group of integers \cite[Theorem 4.6]{laustsenktheory}; for the definition of the $K_0$-group consult e.g.~\cite{blackadar}. Laustsen's proof relies on the fact that $\mathscr{B}(\mathcal{J}_p)$ has a unique normalised trace with kernel being an ideal of operators factoring through a Banach space isomorphic to its $\ell_p$-sum. It turns out that his argument carries over to $\mathscr{B}(\mathcal{J}_p(\omega_1))$, thus we obtain as a by-product the following fact:
\begin{proposition}$K_0(\mathscr{B}(\mathcal{J}_p(\omega_1))\cong \mathbb{Z}$.\end{proposition}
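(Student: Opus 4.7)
The plan is to transplant Laustsen's computation of $K_0(\B(\J_p))\cong\Z$ from \cite[Theorem 4.6]{laustsenktheory} to the long James setting, exactly as advertised in the paragraph preceding the proposition. The three facts Laustsen's argument requires have all been established above for $\J_p(\om_1)$: the character $\Lambda_p$ is the unique normalised trace on $\B(\J_p(\om_1))$; its kernel equals $\mathscr{G}_{\mathfrak{G}_p}(\J_p(\om_1))$ (Theorem \ref{long_James}); and $\mathfrak{G}_p$ is isomorphic to its own countable $\ell_p$-sum (Lemma \ref{l_p-suma}).

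The strategy has two main steps. First, I would upgrade $\Lambda_p$ to a group homomorphism $K_0(\Lambda_p)\colon K_0(\B(\J_p(\om_1)))\to\Z$. Extending $\Lambda_p$ to each matrix algebra $M_n(\B(\J_p(\om_1)))$ by summing diagonal entries yields a trace $\Lambda_p^{(n)}$. Under the induced $\K$-algebra homomorphism $M_n(\B(\J_p(\om_1)))\to M_n(\K)$, any projection maps to an idempotent of integer rank equal to $\Lambda_p^{(n)}(P)$. Hence $\Lambda_p^{(n)}$ takes integer values on projections and $K_0(\Lambda_p)$ is well-defined; it is surjective because $K_0(\Lambda_p)([I])=\Lambda_p(I)=1$.

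Second, I would invoke the six-term exact sequence in $K$-theory for the short exact sequence of Banach algebras
\begin{equation*}
0\longrightarrow\mathscr{G}_{\mathfrak{G}_p}(\J_p(\om_1))\longrightarrow\B(\J_p(\om_1))\stackrel{\Lambda_p}{\longrightarrow}\K\longrightarrow 0
\end{equation*}
and thereby reduce injectivity of $K_0(\Lambda_p)$ to the vanishing of $K_0(\mathscr{G}_{\mathfrak{G}_p}(\J_p(\om_1)))$. Since every element of this ideal factors through $\mathfrak{G}_p$ and $\mathfrak{G}_p\cong\ell_p(\N,\mathfrak{G}_p)$, an Eilenberg-swindle-type argument (as in \cite[Lemma 4.5]{laustsenktheory}) shows that each $K_0$-class in $\mathscr{G}_{\mathfrak{G}_p}(\J_p(\om_1))$ is stably equivalent to a countable direct sum of copies of itself, which forces it to be trivial.

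The main obstacle is the bookkeeping required to verify that Laustsen's matrix manipulations survive the passage from the separable to the long setting; however, each such step is confined to $M_n(\B(\J_p(\om_1)))$ for fixed $n\in\N$ and uses only the formal structural data listed above rather than any cardinality feature distinguishing $\J_p$ from $\J_p(\om_1)$, so the translation is routine. Consequently $K_0(\Lambda_p)$ is an isomorphism and $K_0(\B(\J_p(\om_1)))\cong\Z$.
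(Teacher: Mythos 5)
Your proposal is correct and follows essentially the same route as the paper, which likewise deduces the result by noting that Laustsen's computation of $K_0(\B(\J_p))$ uses only the existence of a unique normalised trace whose kernel is the ideal of operators factoring through a space isomorphic to its own $\ell_p$-sum, and that these ingredients ($\Lambda_p$, Theorem \ref{long_James} and Lemma \ref{l_p-suma}) have been established for $\J_p(\om_1)$. Your extra detail on the induced trace on matrix algebras and the swindle killing $K_0$ of the kernel ideal is just an unpacking of the transferred argument, not a different method.
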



\section{Weakly compactly generated operators on $C(K)$-spaces}
The aim of this section is to give some natural conditions which would guarantee that a~given operator on a~$C(K)$ space is weakly compactly generated. It is well-known that every such operator $T\colon C(K)\to X$ has a~Riesz-type representation (cf. \cite[Chapter 6]{diestel_uhl}). Namely, there exists a~weak$^\ast$-countably additive vector measure $\mu\colon\Sigma\to X^{\ast\ast}$ (called the \emph{representing measure} for $T$), defined on the $\sigma$-algebra $\Sigma$ of all Borel subsets of $K$, such that:
\begin{itemize}
\item[(i)] for each $x^\ast\in X^\ast$ the map $\Sigma\ni A\to \langle \mu(A), x^\ast\rangle$ is a~regular countably additive scalar measure (and will be denoted by $x^\ast\circ\mu$);
\item[(ii)] $\langle x^\ast, Tf\rangle=\int_K f\,{\rm d}(x^\ast\circ\mu)$ for each $x^\ast\in X^\ast$ and $f\in C(K)$;
\item[(iii)] $\n{T}=\n{\mu}(K)$.
\end{itemize}

The representing measure $\mu$ of $T$ may be defined explicitly by \[\mu(A)=T^{\ast\ast}\mathbf{1}_A\;\;\;(A\in \Sigma).\] Equivalently, 
\[\langle \mu(A), x^\ast\rangle =\mu_{x^\ast}(A),\] 
where $\mu_{x^\ast}$ is the scalar measure produced by the Riesz theorem applied for the functional $x^\ast T$.

Suppose $K$ is an \emph{Eberlein compact space}, that is, $K$ is homeomorphic to a weakly compact subset of some Banach space (we refer to \cite{LI} for an exposition concerning the class of Eberlein compact spaces), and consider the identity operator $I_{C(K)}$, which is weakly compactly generated. Then for every $A\in\Sigma$ we have $\mu(A)=\p_A$ and a~straightforward calculation gives $\n{\mu}(A)=1$, provided that $A\not=\varnothing$. So, for a~$\mathsf{WCG}$ operator on $C(K)$ it may happen that there are no non-empty sets of small semivariation, nonetheless in our example the whole domain is Eberlein. In this spirit we will prove the following result.
\begin{theorem}\label{WCG_C(K)}
Let $K$ be a compact Hausdorff space, $X$ be a~Banach space, and $T\colon C(K)\to X$ be a~bounded operator. Suppose $\mu\colon\Sigma\to X^{\ast\ast}$ is the representing measure for $T$ and for each $\e>0$ there exists a~decomposition $K=K_E^\e\cup L^\e$, where $K_E^\e$ is an Eberlein compactum and $\n{\mu}(L^\e)<\e$. Then the range of $T$ lies in a~$\mathsf{WCG}$ Banach space.
\end{theorem}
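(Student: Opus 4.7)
The plan is to reduce the theorem to a~norm-approximation statement, exploiting the closedness of the operator ideal $\WCG$ established in Theorem~\ref{mainwcg}: it suffices to produce, for every $\e>0$, an~operator $T_\e\in\WCG(C(K),X)$ with $\n{T-T_\e}<c\e$ for some absolute constant~$c$. Each $T_\e$ will be obtained by multiplying~$f$ by an~appropriate continuous cut-off, so that $T_\e$ factors through $C(K_E^\e)$, which is WCG by the Amir--Lindenstrauss theorem because $K_E^\e$ is an~Eberlein compactum.

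Fix $\e>0$ and, without loss of generality, take $L^\e=K\setminus K_E^\e$, which is open. The first task is to find a~continuous cut-off $\phi_\e\colon K\to[0,1]$ with $\phi_\e\equiv 0$ on a~closed neighbourhood of~$L^\e$ and $\phi_\e\equiv 1$ on a~closed set $C_\e$ contained in the~interior of~$K_E^\e$, such that $\n{\mu}(K\setminus C_\e)<c\e$. For each fixed $x^\ast\in B_{X^\ast}$ the~scalar measure $x^\ast\circ\mu$ is a~regular Borel measure by the~Riesz representation theorem, so a~set $C_\e$ with the~required approximation exists individually; choosing it uniformly in the~dual ball is the~main technical step, handled by combining inner regularity of the~scalars with the~weak$^\ast$-countable additivity of~$\mu$ and a~compactness argument on the~weak$^\ast$-compact ball~$B_{X^\ast}$. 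Urysohn's lemma applied to the~disjoint closed sets $C_\e$ and $K\setminus\operatorname{int}K_E^\e$ then furnishes the~required~$\phi_\e$.

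Define $T_\e\in\B(C(K),X)$ by $T_\e f:=T(\phi_\e f)$. Since $(1-\phi_\e)f$ is supported in $K\setminus C_\e$,
\[
\n{Tf-T_\e f}=\Bigl\|\int_K (1-\phi_\e)f\,{\rm d}\mu\Bigr\|\leqslant\n{f}_\infty\cdot\n{\mu}(K\setminus C_\e)<c\e\n{f}_\infty ,
\]
hence $\n{T-T_\e}<c\e$. To exhibit $T_\e$ as factoring through $C(K_E^\e)$, let $r_\e\colon C(K)\to C(K_E^\e)$ be the~restriction map and define $R_\e\colon C(K_E^\e)\to X$ by $R_\e(g):=T(\phi_\e\hat g)$, where $\hat g\in C(K)$ is any Tietze extension of~$g$. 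This is well-defined: if $\hat g_1,\hat g_2$ both extend~$g$, then $\hat g_1-\hat g_2\equiv 0$ on~$K_E^\e$, while $\phi_\e\equiv 0$ on~$L^\e=K\setminus K_E^\e$, so $\phi_\e(\hat g_1-\hat g_2)\equiv 0$ on~$K$ and $T(\phi_\e\hat g_1)=T(\phi_\e\hat g_2)$. Linearity and $\n{R_\e}\leqslant\n{T}$ follow by choosing aligned Tietze extensions with $\n{\hat g}_\infty=\n{g}_\infty$, and evidently $T_\e=R_\e\circ r_\e$.

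Since $K_E^\e$ is Eberlein compact, $C(K_E^\e)$ is WCG by Amir--Lindenstrauss, whence $T_\e\in\mathscr{G}_{C(K_E^\e)}(C(K),X)\subseteq\WCG(C(K),X)$. Letting $\e\to 0^+$ and invoking the~closedness of $\WCG$ from Theorem~\ref{mainwcg} gives $T\in\WCG(C(K),X)$, i.e., the~range of~$T$ is contained in a~WCG subspace of~$X$. The~principal obstacle is the~uniform inner regularity step used to produce~$C_\e$: individually each $|x^\ast\circ\mu|$ is inner regular, but extracting a~single closed set $C_\e\subseteq\operatorname{int}K_E^\e$ whose complement has semivariation uniformly small over~$B_{X^\ast}$ demands more than scalar regularity---it uses weak$^\ast$-countable additivity of~$\mu$ together with compactness of the~dual ball.
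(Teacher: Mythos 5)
The outer scaffolding of your argument is fine: the norm estimate $\n{Tf-T_\e f}\leqslant\n{f}\,\n{\mu}(K\setminus C_\e)$, the Tietze-extension factorisation of $T_\e$ through $C(K_E^\e)$, the use of Amir--Lindenstrauss to see that $C(K_E^\e)$ is $\mathsf{WCG}$, and the appeal to closedness of the ideal from Theorem~\ref{mainwcg} are all correct \emph{granted} your cut-off. But the step you yourself flag as ``the main technical step'' is a genuine gap, and not one that the ingredients you cite can fill. Any continuous $\phi_\e$ that vanishes on $K\setminus K_E^\e$ (which is forced by the well-definedness of $R_\e$) automatically vanishes on the closed set $K\setminus\operatorname{int}K_E^\e$, so your construction only ``sees'' the interior of $K_E^\e$; hence $K\setminus C_\e$ always contains $K_E^\e\setminus\operatorname{int}K_E^\e$, and you need $\n{\mu}\bigl(K_E^\e\setminus\operatorname{int}K_E^\e\bigr)$ to be small. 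The hypothesis controls only $\n{\mu}(L^\e)$ and says nothing about the semivariation carried by the boundary of the Eberlein piece; indeed nothing prevents $K_E^\e$ from being nowhere dense, in which case every admissible $\phi_\e$ is identically zero, $T_\e=0$, and no approximation of $T$ is obtained. Nor is the purported ``uniform inner regularity'' a general property of representing measures: the semivariation $\n{\mu}(\cdot)$ is merely monotone and countably subadditive, not continuous from above (for the identity on $C[0,1]$ every non-empty Borel set has semivariation $1$), and making it small on $U\setminus C$ uniformly over $B_{X^\ast}$ is essentially a weak-compactness-type restriction on $T$, not a consequence of regularity of the scalar measures $x^\ast\circ\mu$ plus compactness of $B_{X^\ast}$.

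The paper's proof is built precisely to avoid this obstruction. Instead of multiplying by a cut-off, it restricts the representing measure itself to the Borel subsets of $K_E^\e$, obtaining an operator $\hat{T}$ on $C(K_E^\e)$ in which the boundary of $K_E^\e$ is retained; the hypothesis then gives directly $\bigl|x^\ast\bigl(Tf-\hat{T}(f|_{K_E^\e})\bigr)\bigr|\leqslant\abs{x^\ast\circ\mu}(L^\e)\,\n{f}\leqslant\e\n{f}$. The price is that an operator defined only through the restricted measure is a priori $X^{\ast\ast}$-valued, so one cannot simply quote closedness of $\WCG(C(K),X)$; the paper instead covers $T(B_{C(K)})$ by countably many sets of the form $nM+2\e B_X$ with $M$ weakly compact and invokes the Fabian--Montesinos--Zizler criterion (Theorem~\ref{FMZ_theorem}), which is exactly the $\e$-tolerant device that absorbs these errors. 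To salvage your route you would have to prove that the hypothesis can be upgraded to a decomposition whose semivariation is captured by a closed subset of $\operatorname{int}K_E^\e$ --- which does not follow from what is assumed --- or else switch to the measure-restriction approximants and then argue via Theorem~\ref{FMZ_theorem} rather than via closedness of the ideal.
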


We shall make use of the characterisation of subspaces of $\mathsf{WCG}$ Banach spaces, obtained by Fabian, Montesinos and Zizler \cite{FMZ} (cf. also \cite[Theorem 6.13]{hajek}). Recall that a~subset $M$ of a~Banach space $X$ is called $\e$-{\it weakly compact} if it is bounded and $\wsc{M}\subseteq X+\e B_{X^{\ast\ast}}$.
\begin{theorem}\label{FMZ_theorem}
A Banach space is a~subspace of a~$\mathsf{WCG}$ Banach space if and only if for every $\e>0$ its unit ball can be covered by countably many $\e$-weakly compact sets.
\end{theorem}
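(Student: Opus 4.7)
The plan is to establish the two implications separately; the forward direction follows from a direct construction from the weakly compact generator of the ambient space, while the reverse direction is the substantive step.

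\textbf{Necessity.} Suppose $X$ is a closed subspace of a WCG Banach space $Y$ with a weakly compact set $K\subseteq Y$ generating $Y$; by Krein's theorem I may assume $K$ is absolutely convex. Fix $\e>0$. Since $\bigcup_n nK$ is norm-dense in $Y$, the sets $N_n:=nK+(\e/2)B_Y$ cover $Y$, so $M_n:=B_X\cap N_n$ yields a countable cover of $B_X$. To see that each $M_n$ is $\e$-weakly compact in $X$, note that the bi-adjoint $\iota^{\ast\ast}\colon X^{\ast\ast}\to Y^{\ast\ast}$ of the inclusion $\iota\colon X\hookrightarrow Y$ is $w^\ast$-$w^\ast$ continuous and injective (because the restriction map $\iota^\ast\colon Y^\ast\to X^\ast$ is surjective by Hahn--Banach), and it sends $\wsc{M_n}$ into the $w^\ast$-closure of $N_n$ in $Y^{\ast\ast}$, which equals $nK+(\e/2)B_{Y^{\ast\ast}}\subseteq Y+(\e/2)B_{Y^{\ast\ast}}$. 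The Principle of Local Reflexivity then pulls this inclusion back inside $X+\e B_{X^{\ast\ast}}$, as required.

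\textbf{Sufficiency.} Given covers $B_X=\bigcup_k M_{n,k}$ with each $M_{n,k}$ being $(1/n)$-weakly compact, I would apply the Davis--Figiel--Johnson--Pe\l{}czy\'nski interpolation construction to each $M_{n,k}$, after first replacing it by its absolutely convex hull (the $w^\ast$-closure of which still lies in $X+(1/n)B_{X^{\ast\ast}}$ by a standard convexification argument). This produces a Banach space $Z_{n,k}$ with a continuous inclusion $j_{n,k}\colon Z_{n,k}\to X$ whose image absorbs $M_{n,k}$. The $(1/n)$-weak compactness of $M_{n,k}$ forces the bidual $Z_{n,k}^{\ast\ast}$ to lie within $(1/n)B_{Z_{n,k}^{\ast\ast}}$ of $Z_{n,k}$, and an iteration of the DFJP procedure (letting $n\to\infty$ along the second index) refines $Z_{n,k}$ to a subspace of a genuinely WCG space. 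Forming $Z:=\bigl(\bigoplus_{n,k}Z_{n,k}\bigr)_{\ell_2}$, which is WCG as an $\ell_2$-sum of WCG spaces by Lindenstrauss's theorem recalled in the introduction, I would then exhibit an isomorphic embedding $X\hookrightarrow Z$ by sending $x\in X$ to the sequence of appropriately weighted DFJP-lifts of $x$ indexed by the memberships $x\in M_{n,k}$, with the weights $(1/n^2)$ arranged so as to guarantee both summability and a bound-below estimate.

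\textbf{Main obstacle.} The heart of the argument -- and the step that would occupy most of the technical work -- is the promotion from merely $\e$-weakly compact sets to genuine WCG-generating data. The DFJP machinery handles the transition from a weakly compact absolutely convex set to a reflexive (hence WCG) space, but here one must adapt the construction to the approximate notion and combine the resulting spaces coherently across the doubly-indexed family $(M_{n,k})$, ensuring that the assignment $x\mapsto(j_{n,k}^{-1}x)_{n,k}$ can be made linear. The standard way to achieve this is to work throughout with the convex absorbing sets produced by DFJP rather than with the raw $M_{n,k}$, and to use a quotient/selection on each $Z_{n,k}$ instead of a pointwise inverse of $j_{n,k}$.
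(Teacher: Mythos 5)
This theorem is not proved in the paper at all: it is quoted verbatim from Fabian--Montesinos--Zizler \cite{FMZ} (cf.\ \cite[Theorem~6.13]{hajek}), so there is no in-paper argument to compare with; your attempt has to stand on its own. Your necessity half essentially works: covering $B_X$ by the sets $B_X\cap\bigl(nK+(\e/2)B_Y\bigr)$ is the right move, and the only imprecision is the final step, which is not really the Principle of Local Reflexivity. What you need is that an element $z\in X^{\perp\perp}\cong X^{\ast\ast}$ with $\operatorname{dist}_{Y^{\ast\ast}}(z,Y)\leqslant\e/2$ satisfies $\operatorname{dist}(z,X)\leqslant\e$, and this follows from the isometry $Y^{\ast\ast}/X^{\perp\perp}\cong(X^{\perp})^{\ast}$, which gives $\operatorname{dist}_{Y^{\ast\ast}}(y,X^{\perp\perp})=\operatorname{dist}_Y(y,X)$ for $y\in Y$; with a slightly smaller starting $\e$ the constants close up. That is a fixable misattribution, not a gap.

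The sufficiency half, however --- which is precisely the direction the paper actually uses in Theorem~\ref{WCG_C(K)} --- has genuine gaps at its core. First, the assertion that $(1/n)$-weak compactness of $M_{n,k}$ "forces $Z_{n,k}^{\ast\ast}$ to lie within $(1/n)B_{Z_{n,k}^{\ast\ast}}$ of $Z_{n,k}$" is a quantitative DFJP statement that is itself a nontrivial theorem (and not with those constants); you cannot simply assert it. Second, the step "an iteration of the DFJP procedure (letting $n\to\infty$ along the second index) refines $Z_{n,k}$ to a subspace of a genuinely WCG space" is not a defined construction: the covers for different $n$ are unrelated, and upgrading approximate weak compactness to membership in a WCG space is exactly the statement being proved, so this step is circular. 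Third, and most seriously, the proposed embedding of $X$ into $\bigl(\bigoplus_{n,k}Z_{n,k}\bigr)_{\ell_2}$ does not exist as described: the DFJP maps go from $Z_{n,k}$ \emph{into} $X$, a generic $x\in X$ does not lie in any single $Z_{n,k}$, and "weighted lifts indexed by the memberships $x\in M_{n,k}$" is a non-linear assignment (which sets contain $x$ varies wildly with $x$); your closing remark about "a quotient/selection on each $Z_{n,k}$" names this obstacle but does not remove it. The actual Fabian--Montesinos--Zizler argument is of a different nature: from the countable covers by $\e$-weakly compact sets one verifies (via a quantitative interchange-of-limits/covering criterion in the spirit of Farmaki's characterisation) that $(B_{X^{\ast}},w^{\ast})$ is an Eberlein compactum, and then $X$ embeds into $C(B_{X^{\ast}})$, which is WCG by Amir--Lindenstrauss; no direct DFJP gluing of the pieces is attempted, and your sketch does not supply a substitute for that mechanism.
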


\begin{proof}[Proof of Theorem \ref{WCG_C(K)}]
Fix $\e>0$ and define $\hat{T}\colon C(K_E^\e)\to X$ to be the unique operator whose representing measure is equal to the restriction of $\mu$ to Borel subsets of $K_E^\e$ (see \cite[Theorem VI.1.1]{diestel_uhl}). Since $K_E^\e$ is Eberlein, the range of $\hat{T}$ is weakly compactly generated. Let $M\subseteq X$ be a~convex, symmetric and weakly compact set such that $$\oo{\hat{T}(C(K_E^\e))}\subset\bigcup_{n=1}^\infty\bigl(nM+\e B_X\bigr).$$

For every $f\in C(K)$ and $x^\ast\in X^\ast$, $\n{x^\ast}\leqslant 1$, we have $$\bigl|x^\ast\bigl(Tf-\hat{T}f|_{K_E^\e}\bigr)\bigl|=\Bigl|\int_{L^\e}fd(x^\ast\circ\mu)\Bigr|\leqslant\abs{x^\ast\circ\mu}(L^\e)\n{f}\leqslant\e\n{f}.$$This implies that $$T(B_{C(K)})\subset\bigcup_{n=1}^\infty (nM+2\e B_X)$$and, since $\wsc{nM+2\e B_X}\subseteq X+2\e B_{X^{\ast\ast}}$, each of the sets $nM+2\e B_X$ is $2\e$-weakly compact. Repeating this argument for $\e/k$ instead of $\e$ (for all $k\in\N$) we get a~similar covering of $T(kB_{C(K)})$ and, consequently, for some sequence $(M_k)_{k=1}^\infty$ of weakly compact sets we have $T(C(K))\subset\ccup_{k=1}^\infty (M_k+2\e B_X)$, thus $\oo{T(C(K))}$ may be covered by countably many $3\e$-weakly compact sets. It remains to appeal to Theorem~\ref{FMZ_theorem}.
\end{proof}

Now, we present two examples. The first one shows that the implication in Theorem \ref{WCG_C(K)} cannot be reversed, while the second one shows that the assumption of this theorem does not imply that there exists a~decomposition $K=K_E\cup L$ with $K$ being Eberlein and $L$ being of semivariation zero.

\begin{example}
Consider a~map $\p\colon [0,\om_1]\to [0,\om_1]$ given by $$\p(\alpha)=\left\{\begin{array}{ll}
\alpha+1, & \mbox{if }\alpha<\om_1\mbox{ is a successor ordinal},\\
\alpha, & \mbox{if }\alpha\leqslant\om_1\mbox{ is a limit ordinal.}
\end{array}\right.$$
This is a continuous function, whence the composition operator $C_\p\colon C[0,\om_1]\to C[0,\om_1]$ defined by $C_\p f=f\circ\p$ is bounded. Now, put $T=I_{C[0,\om_1]}-C_\p$. 

Observe that $T$ maps the Schauder basis $(\ind_{[0,\alpha]})_{0\leqslant\alpha\leqslant\om_1}$ of $C[0,\om_1]$ onto the set $\{e_\alpha\}_{\alpha\in D}\cup\{0\}$, where $D$ is the set consisting of zero and all successors less than $\om_1$. Hence, the range of $T$ is isometric to $c_0(\om_1)$ which is a~$\mathsf{WCG}$ Banach space. However, as we shall see, there is no decomposition $[0,\om_1]=K\cup L$ with $K$ being Eberlein and with semivariation of $L$ less than $1$.

Let $x^\ast=(x_\alpha)_{0\leqslant\alpha\leqslant\om_1}\in\ell_1[0,\om_1]$ (identified with the dual space of $C[0,\om_1]$). For each $f\in C[0,\om_1]$ we have $$x^\ast T(f)=\sum_{\alpha\in D}x_\alpha(f(\alpha)-f(\alpha+1)),$$whence the representing measure for the functional $x^\ast T$ is given by $$\mu_{x^\ast}(\{\alpha\})=\left\{\begin{array}{ll}
x_0, & \mbox{if }\alpha=0,\\
x_\alpha-x_{\alpha^\prime}, & \mbox{if }\alpha=\alpha^\prime+1\in D,\\
0, & \mbox{otherwise}.
\end{array}\right.
$$
Let $\mu$ stand for the representing measure for $T$. By the relation $\mu(A)x^\ast=\mu_{x^\ast}(A)$, for any Borel set $A\subseteq [0,\om_1]$, and any $x^\ast\in C[0,\om_1]^\ast$, we have 
$$\abs{x^\ast\circ\mu}(A)=\sup_\pi\sum_{E_j\in\pi}\abs{\mu(E_j)x^\ast}=\sum_{\alpha\in A}\abs{\mu_{x^\ast}\{\alpha\}},$$
where $\pi$ is the set of all finite Borel partitions of $A$. Hence, whenever $A\cap D\not=\varnothing$, we have $\n{\mu}(A)\geqslant 1$.This shows that any decomposition $[0,\om_1]=K\cup L$ with $\n{\mu}(L)<1$ would imply $D\subseteq K$, thus $K$ would be homeomorphic to the non-Eberlein space $[0,\om_1]$.
\end{example}

\begin{example}
Define an operator $T\colon C(\beta\N)\cong\ell_\infty\to c_0$ by $T(\xi)=(\frac{1}{n}\xi_n)_{n=1}^\infty$. Then for each $x^\ast=(\eta_n)_{n=1}^\infty\in\ell_1$ the representing measure $\mu_{x^\ast}$ for $x^\ast T$ is supported on the set $\N$ and for each $n\in\N$ it takes the value $\frac{1}{n}\eta_n$. Then, similarly as above, we get $\abs{x^\ast\circ\mu}(A)=\sum_{n\in A\cap\N}\frac{1}{n}\abs{\eta_n}$ for every Borel set $A\subset\beta\N$. Therefore, $$\n{\mu}(A)=\frac{1}{\min(A\cap\N)}\quad\mbox{(with the convention }\frac{1}{\infty}=0\mbox{)}.$$

Consequently, in order to have $\beta\N=K_E^\e\cup L^\e$ with $\n{\mu}(L^\e)<\e$, one should only guarantee that $\min(L^\e\cap\N)>\e^{-1}$. However, if we wish that $\beta\N=K\cup L$, where $K$ is Eberlein and $\n{\mu}(L)=0$, then necessarily $\N\subseteq K$, whence $K=\beta\N$ which is not Eberlein. 
\end{example}

\section{$\WCG(C(K))$ inside $\B(C(K))$}
Let $K$ be a compact Hausdorff space. The `magnitude' of $\mathscr{W\!C\!G}(C(K))$ in $\mathscr{B}(C(K))$ may be used as a na\"{i}ve measure of the `similarity of the space $K$ to an Eberlein compactum'. The first-named author together with Koszmider and Laustsen \cite{kaniakoszmiderlaustsen} proved that for $C[0,\omega_1]$ the ideal of weakly compactly generated operators is as big as possible, that is, it has codimension one in $\mathscr{B}(C[0,\omega_1])$. On the other hand, Laustsen and Loy \cite[p.~253]{laustsenloy} noticed, based on well-known facts, that the ideal of weakly compact operators on $\ell_\infty= C(\beta \mathbb{N})$ is the unique maximal ideal of $\mathscr{B}(\ell_\infty)$. The space $\ell_\infty$ is Grothendieck \cite[Theorem VII.15]{diestelseq}, hence it follows from \cite[Corollary 5, p.~150]{Di} that $\mathscr{W}(\ell_\infty)=\mathscr{W\!C\!G}(\ell_\infty)$. However, the codimension of $\mathscr{W\!C\!G}(\ell_\infty)$ in $\mathscr{B}(\ell_\infty)$ is infinite, as can be easily seen.

We shall add to this picture a $C(K)$-space constructed by Koszmider \cite{Ko} for which the ideal of weakly compactly generated operators has codimesion one in $\mathscr{B}(C(K))$. This space has an interesting feature: we are able to give a complete description of the lattice of closed ideals of $\mathscr{B}(C(K))$ what we shall do. The first-named author was informed by P.A.H. Brooker that he also obtained a similar result (unpublished) independently.

\begin{theorem}[Koszmider \cite{Ko}]\label{Koszmider}Assuming the continuum hypothesis $\mathsf{CH}$ or Martin's Axiom with $\neg \mathsf{CH}$, there exists a compact scattered Hausdorff space $K$ such that:
\begin{romanenumerate}
\item the ideal $\mathscr{X}(C(K))$ has codimension one in $\mathscr{B}(C(K))$;
\item each separable subspace of $C(K)$ is contained in a subspace isomorphic to $c_0$;
\item if $C(K)=A\oplus B$ is a decomposition into two closed, infinite-dimensional subspaces $A$ and $B$, then either $A\cong c_0$ and $B\cong C(K)$ or vice versa\end{romanenumerate} \end{theorem}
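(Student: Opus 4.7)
The plan is to construct the space $K$ as (a compactification of) a carefully engineered Mr\'owka-type $\psi$-space built from an almost disjoint family on $\om$. Under $\mathsf{CH}$ one has a length-$\om_1$ transfinite recursion available, while under $\mathsf{MA}+\neg\mathsf{CH}$ one runs a recursion of length $\mathfrak{c}$ and uses $\mathsf{MA}$ to handle the countably many consistency conditions at each step. In either case, at stage $\alpha$ we have already built an almost disjoint family $\{A_\beta\}_{\beta<\alpha}\subseteq[\om]^\om$, and we enumerate as we go along all countable families of infinite subsets of $\om$, all candidate pairs of bounded sequences in $C$-spaces of initial segments, and (crucially) all potential ``non-trivial'' bounded operators represented as matrices indexed by a countable subfamily; we then pick $A_\alpha$ diagonalising against the $\alpha$-th such object. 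The space $K$ is taken to be the one-point compactification of the resulting $\psi$-space $\om\cup\mathcal{A}\cup\{\infty\}$, which is automatically compact, Hausdorff, and scattered of Cantor--Bendixson rank $3$.

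Property (ii) then falls out of the construction directly: any separable subspace $F\subseteq C(K)$ is determined by a countable collection of functions, and by a standard pigeonhole each such function is ``essentially supported'' on $\om\cup\mathcal{A}_0\cup\{\infty\}$ for some countable $\mathcal{A}_0\subseteq\mathcal{A}$; the restriction algebra $C(\om\cup\mathcal{A}_0\cup\{\infty\})$ is a countable scattered compactum, hence $C(\om\cup\mathcal{A}_0\cup\{\infty\})\cong c_0$ by the classical Bessaga--Pe\l czy\'nski description of $C$-spaces on countable compacta, and $F$ embeds into this $c_0$-copy. Property (iii) will then be a formal consequence of (i) and (ii) together with the Pe\l czy\'nski decomposition method, because $C(K)\cong c_0\oplus C(K)$ (this decomposition is easily built from the clopen structure of $K$); any complemented infinite-dimensional subspace $A$ is either separable, in which case (ii) forces $A\cong c_0$ and then the decomposition method gives $B\cong C(K)$, or else $A$ has codimension $1$ ``as far as the ideal $\X(C(K))$ is concerned'' via (i), whence the roles of $A$ and $B$ reverse.

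The main obstacle is (i): one must guarantee that every $T\in\B(C(K))$ admits a scalar $\lambda=\lambda(T)\in\K$ such that $T-\lambda I_{C(K)}$ has separable range. The strategy is to represent an arbitrary $T$ by its action on the characteristic functions $\ind_{A_\alpha}$ (together with the isolated points), and establish a dichotomy: either there exists $\lambda\in\K$ for which $\{\alpha<\om_1:\n{T\ind_{A_\alpha}-\lambda\ind_{A_\alpha}}\geqslant\e\}$ is countable for every $\e>0$, or one can extract uncountably many $\alpha$'s realising asymptotically incompatible images. The diagonal step in the construction of $\mathcal{A}$ is precisely designed to rule out the second alternative: at stage $\alpha$ one chooses $A_\alpha$ so as to destroy any candidate ``wild'' operator pattern that has been enumerated earlier, using $\mathsf{CH}/\mathsf{MA}$ to ensure all such patterns are met cofinally often. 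Once the dichotomy holds, the operator $T-\lambda I$ is small on a cocountable subfamily of $\mathcal{A}$ and the arguments from (ii) push this through to conclude that its range is separable, giving $T-\lambda I\in\X(C(K))$ and codimension one of $\X(C(K))$ in $\B(C(K))$.
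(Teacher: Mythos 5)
This theorem is not proved in the paper at all: it is quoted from Koszmider's article [Ko], so your proposal must be measured against Koszmider's construction. Your skeleton is the right one (a transfinite recursion of length $\omega_1$ under $\mathsf{CH}$, resp.\ $\mathfrak{c}$ under $\mathsf{MA}+\neg\mathsf{CH}$, building an almost disjoint family; $K$ the one-point compactification of the resulting $\psi$-space, scattered of height $3$; (ii) from countable ``supports''; (iii) deduced from (i) and (ii) by reading a bounded projection modulo $\mathscr{X}(C(K))$). But the actual content of the theorem, property (i), is asserted rather than proved, and this is a genuine gap. You say you ``enumerate all potential non-trivial bounded operators'' and diagonalise against them; this cannot be done literally, since there are $2^{\mathfrak{c}}$ operators (and operator ``patterns'') but only $\mathfrak{c}$ stages. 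The heart of Koszmider's argument is precisely how an operator on the \emph{final, not yet existing} space is captured cofinally often by countable data available at stage $\alpha$ (essentially the traces $T\ind_{A_\beta}|_\omega\in\ell_\infty$, $\beta<\alpha$), together with the stabilisation ($\Delta$-system/closing-off) arguments and the concrete choice of $A_\alpha$ that make a ``wild'' $T$ contradict its own boundedness; your ``dichotomy'' is the statement to be proved, not a tool you may invoke. Moreover, your last step fails as stated: knowing that $\|(T-\lambda I)\ind_{A_\alpha}\|<\e$ for all but countably many $\alpha$ does \emph{not} yield separable range, because a finite combination $\sum_i c_i\ind_{A_{\alpha_i}}$ has norm comparable to $\max_i|c_i|$ while the images are only bounded by $\e\sum_i|c_i|$; one must show the images land in a \emph{fixed} separable subspace, not merely that they are small. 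Also, under $\mathsf{MA}+\neg\mathsf{CH}$ one has to meet $|\alpha|<\mathfrak{c}$ many requirements at stage $\alpha$, not ``countably many'' (countably many needs only Baire category, no $\mathsf{MA}$).

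Two smaller repairs. In (ii), $\omega\cup\mathcal{A}_0\cup\{\infty\}$ is not closed in $K$, so its ``restriction algebra'' is not a subspace of $C(K)$; the correct statement is that the functions which are constant on the closed set $(\mathcal{A}\setminus\mathcal{A}_0)\cup\{\infty\}$ form a subspace isometric to $C$ of the quotient compactum obtained by collapsing that set --- a countable compactum of finite height, hence isomorphic to $c_0$ by Bessaga--Pe\l czy\'nski --- and a density argument puts any separable $F$ inside it (this part needs no special features of the family). In (iii), the Pe\l czy\'nski decomposition method cannot be invoked loosely: (iii) itself forces $C(K)\not\cong C(K)\oplus C(K)$. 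The working argument is: the projection onto $A$ equals $0$ or $1$ modulo $\mathscr{X}(C(K))$ by (i); in the first case $A$ is separable, sits complemented in a copy of $c_0$ by (ii), hence $A\cong c_0$ by Pe\l czy\'nski's theorem on complemented subspaces of $c_0$, while $B$ contains a copy of $c_0$ complemented in $C(K)$ (Lotz--Peck--Porta, as $K$ is scattered), so $B\cong c_0\oplus B\cong A\oplus B=C(K)$; the second case is symmetric. These points are fixable, but without the capture-and-diagonalisation machinery for (i) the proposal does not prove the theorem.
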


\begin{remark}\label{mrowkaremark}The above-mentioned space $K$ is a special example of a \emph{Mr\'{o}wka space}, that is, the Stone space of the Boolean subalgebra of $\mathscr{P}(\mathbb{N})$ generated by an uncountable family of almost disjoint sets together with all finite subsets of $\mathbb{N}$. Mr\'{o}wka spaces are classical examples of scattered compacta which are not Eberlein. Consequently, the ideal $\mathscr{W\!C\!G}(C(K))$ is properly contained in $\mathscr{B}(C(K))$. Since $\mathscr{W\!C\!G}(C(K))$ contains the maximal ideal $\mathscr{X}(C(K))$ and is proper, we have $\mathscr{W\!C\!G}(C(K))=\mathscr{X}(C(K))$.\end{remark}

For this particular space $K$ we can describe the lattice of all closed ideal in $\mathscr{B}(C(K))$. To do that we need to gather some well-known facts. Each Radon measure on a compact scattered space $K$ is countably supported, whence there is a natural isometric identification between the dual space of $C(K)$ and the Banach space $\ell_1(K)$. In particular, the dual space $C(K)^*$ enjoys the Schur property. Moreover, we shall require the following theorem due to Lotz, Peck and Porta \cite{Lo}:

\begin{theorem}\label{lotz}Let $K$ be a compact Hausdorff space. Then, $K$ is scattered if and only if each closed, infinite-dimensional subspace of $C(K)$ contains a subspace which is isomorphic to $c_0$ and complemented in $C(K)$.\end{theorem}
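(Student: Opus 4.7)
The plan is to treat the two implications separately, using in each case standard characterisations relating the topology of $K$ to the Banach space structure of $C(K)$.

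For the forward direction, suppose $K$ is scattered. Every Radon measure on $K$ is then purely atomic, so $C(K)^\ast$ is isometrically isomorphic to $\ell_1(K)$; in particular $C(K)^\ast$ has the Schur property and $C(K)$ is Asplund, hence contains no isomorphic copy of $\ell_1$. Let $Y$ be any closed infinite-dimensional subspace of $C(K)$. Since $Y$ contains no copy of $\ell_1$, Rosenthal's $\ell_1$-theorem supplies a normalised weakly Cauchy sequence in $Y$; subtracting consecutive terms and renormalising produces a normalised weakly null sequence $(f_n)\subseteq Y$. A Bessaga--Pe\l czy\'nski-type selection then allows passage to a basic subsequence whose closed linear span $Z$ is isomorphic to $c_0$; the $c_0$-equivalence uses the weakly unconditionally Cauchy dichotomy available in $C(K)$ together with the absence of $\ell_1$-copies in $Y$.

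To complement $Z$ inside $C(K)$, I would invoke Hahn--Banach to produce a sequence $(\mu_n)\subseteq C(K)^\ast$ with $\mu_n(f_m)=\delta_{nm}$ and $\sup_n\n{\mu_n}<\infty$ (the bound coming from the basis constant of $(f_n)$). A diagonal extraction inside the weak$^\ast$-compact ball of $C(K)^\ast$ allows the further arrangement that $(\mu_n)$ be weak$^\ast$-null, so that $(\mu_n(f))_n\in c_0$ for every $f\in C(K)$. Then
\[
Pf=\sum_{n=1}^\infty\mu_n(f)f_n
\]
converges by the $c_0$-equivalence of $(f_n)$ and defines a bounded linear projection from $C(K)$ onto $Z$.

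For the converse, I would argue contrapositively. If $K$ is not scattered, then $K$ admits a non-empty perfect closed subset $P$; a Cantor scheme inside $P$ gives a continuous surjection $P\to [0,1]$, which extends by Tietze's theorem to a continuous surjection $K\to [0,1]$. Pullback along this surjection embeds $C[0,1]$ isometrically into $C(K)$, and by the Banach--Mazur universality of $C[0,1]$ among separable Banach spaces, $\ell_2$ embeds isomorphically into $C(K)$. Since $\ell_2$ is reflexive, it contains no isomorph of $c_0$ whatsoever, let alone a subspace complemented in $C(K)$, contradicting the hypothesis.

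The main technical obstacle is the forward direction: specifically, refining the weakly null $(f_n)$ so that the resulting basic subsequence is simultaneously equivalent to the canonical $c_0$-basis and possesses biorthogonal functionals which can be chosen weak$^\ast$-null. Once this refinement is in hand, the projection formula is immediate. All other ingredients --- purely atomic duality for scattered $K$, Rosenthal's $\ell_1$-theorem, and Tietze extension --- are classical.
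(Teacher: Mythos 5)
The converse direction of your proposal (not scattered $\Rightarrow$ the subspace property fails) is correct and essentially the standard argument: a non-scattered compactum has a nonempty perfect closed subset, a Cantor scheme plus Tietze extension gives a continuous surjection of $K$ onto $[0,1]$, hence an isometric copy of $C[0,1]$ and so of $\ell_2$ inside $C(K)$, and $\ell_2$ contains no copy of $c_0$. The genuine gap is in the forward direction, at its central step: from a seminormalised weakly null sequence in $Y$ you cannot in general extract a subsequence equivalent to the canonical basis of $c_0$, even inside $C(K)$ with $K$ scattered and even countable. The unit vector basis of the Schreier space is a counterexample: the Schreier space embeds isomorphically into $C(K)$ for $K$ homeomorphic to $[0,\omega^\omega]$ (a countable, hence scattered, compactum), its basis is normalised and weakly null, yet $\bigl\|\sum_{k=m+1}^{2m}e_{n_k}\bigr\|\geqslant m$ for every subsequence, so no subsequence is $c_0$-equivalent. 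Your appeal to a ``Bessaga--Pe\l czy\'nski-type selection'' and a ``weakly unconditionally Cauchy dichotomy'' misfires here: the Bessaga--Pe\l czy\'nski $c_0$-theorem requires the series $\sum f_n$ to be weakly unconditionally Cauchy, and weak nullity together with the absence of $\ell_1$-copies does not provide this (the Schreier basis is weakly null, spans no $\ell_1$, and is not WUC). This is precisely why the $c_0$-saturation of $C(K)$ for scattered $K$ needs a different mechanism --- classically, one reduces a separable subspace $Y$ to a subspace of $C(\widetilde K)$ with $\widetilde K$ a countable (metrisable scattered) continuous image of $K$ and then runs the Pe\l czy\'nski--Semadeni/transfinite-induction argument on ordinal spaces, or one uses the semi-embedding machinery of Lotz, Peck and Porta; the paper does not prove the theorem at all but quotes it from that source.

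Two further remarks on the complementation step, which is otherwise sound once a genuine $c_0$-basic sequence $(f_n)$ is in hand. First, ``diagonal extraction inside the weak$^\ast$-compact ball of $C(K)^\ast$'' is not available verbatim, since $C(K)$ is typically nonseparable; what saves you is that $C(K)$ is Asplund for scattered $K$, so its dual ball is weak$^\ast$ sequentially compact (Hagler--Johnson, a fact the paper itself invokes elsewhere). Second, after extracting $\mu_{n_k}\xrightarrow{w\ast}\mu$ you do not need any further perturbation to make the functionals weak$^\ast$-null and biorthogonal: since $\mu_{n_k}(f_m)=\delta_{n_k,m}$ is eventually $0$ for each fixed $m$, the limit $\mu$ automatically annihilates every $f_m$, so $\nu_k=\mu_{n_k}-\mu$ is weak$^\ast$-null and exactly biorthogonal to $(f_{n_k})$, and your projection formula then works as written. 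So the complementation half is repairable; the unrepaired and essential gap is the extraction of a $c_0$-subsequence from a weakly null sequence.
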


\begin{proposition}\label{scatt}Let $K$ be a compact scattered Hausdorff space. Then,
\begin{romanenumerate}
\item $\mathscr{K}(C(K))=\mathscr{W}(C(K))$;
\item\label{scattii} no closed ideal lies between $\{0\}$ and $\mathscr{K}(C(K))$ or $\mathscr{K}(C(K))$ and $\mathscr{G}_{c_0}(C(L))$.\end{romanenumerate}\end{proposition}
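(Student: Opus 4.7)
The plan for (i) is to combine Gantmacher's theorem with the Schur property of $C(K)^\ast$ recalled just above: given $T\in\mathscr{W}(C(K))$, the adjoint $T^\ast$ is weakly compact on $C(K)^\ast\cong\ell_1(K)$, and since $\ell_1(K)$ has the Schur property, the relatively weakly compact set $T^\ast(B_{C(K)^\ast})$ is actually relatively norm compact. Thus $T^\ast$ is compact, and Schauder's theorem returns compactness of $T$; the reverse inclusion is clear.

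For the first half of (ii), I aim to show that every non-zero closed ideal $\mathscr{J}\subseteq\mathscr{B}(C(K))$ already contains $\mathscr{K}(C(K))$. Fixing a non-zero $T\in\mathscr{J}$, I choose $f_0\in C(K)$ and $\phi\in C(K)^\ast$ with $\phi(Tf_0)=1$; then for arbitrary $g\in C(K)$ and $\psi\in C(K)^\ast$ the rank-one operator $h\mapsto\psi(h)g$ admits the factorisation $(g\otimes\phi)\circ T\circ(f_0\otimes\psi)$ and hence lies in $\mathscr{J}$. This places $\mathscr{F}(C(K))\subseteq\mathscr{J}$, and since $C(K)$ is an $\mathscr{L}_\infty$-space (so has the metric approximation property), passing to the closure yields $\mathscr{K}(C(K))\subseteq\mathscr{J}$.

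For the second half of (ii), I take a closed ideal $\mathscr{J}$ strictly larger than $\mathscr{K}(C(K))$ and pick $T\in\mathscr{J}\setminus\mathscr{K}(C(K))$. Part (i) says $T$ is not weakly compact, so Pe\l{}czy\'{n}ski's theorem on operators from $C(K)$-spaces (a manifestation of property $(V)$) produces a closed subspace $Y\subseteq C(K)$ with $Y\cong c_0$ on which $T$ is bounded below. I then apply Theorem \ref{lotz} inside the infinite-dimensional subspace $T(Y)$ to extract $Z\subseteq T(Y)$ with $Z\cong c_0$ complemented in $C(K)$ via some projection $P$. Setting $Y_0=(T|_Y)^{-1}(Z)\subseteq Y$, the restriction $T|_{Y_0}\colon Y_0\to Z$ is an isomorphism; fixing an isomorphism $\alpha\colon c_0\to Y_0$, the composition $\psi:=T\alpha\colon c_0\to Z$ is an isomorphism of Banach spaces.

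With these ingredients, for an arbitrary operator of the form $AB$ with $A\in\mathscr{B}(c_0,C(K))$ and $B\in\mathscr{B}(C(K),c_0)$, I set $R:=\alpha B$ and $S:=A\psi^{-1}P$, both in $\mathscr{B}(C(K))$. Since $T\alpha$ takes values in $Z$, where $P$ acts as the identity, a short calculation gives $STR=A\psi^{-1}PT\alpha B=A\psi^{-1}\psi B=AB$, so $AB\in\mathscr{J}$; thus $\mathscr{G}_{c_0}(C(K))\subseteq\mathscr{J}$. The main obstacle --- and the reason Theorem \ref{lotz} is genuinely needed rather than Pe\l{}czy\'{n}ski's theorem alone --- is the extraction of a \emph{complemented} copy of $c_0$ inside $T(Y)$: only the projection $P$ lets one invert $T|_{Y_0}$ globally on $C(K)$ and realise $AB$ as a concrete factorisation through $T$.
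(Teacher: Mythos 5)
Your argument is correct and follows essentially the same route as the paper: part (i) via the Schur property of $C(K)^\ast\cong\ell_1(K)$ together with Gantmacher's and Schauder's theorems, and part (ii) via the approximation property of $C(K)$ (making $\mathscr{K}(C(K))$ the smallest non-trivial closed ideal) combined with Pe\l{}czy\'{n}ski's theorem and Theorem \ref{lotz} to extract a complemented copy of $c_0$ through which the identity on $c_0$ factors. You merely spell out in more detail the rank-one factorisation and the explicit factorisation $AB=STR$ that the paper leaves implicit.
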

\begin{proof}Part (i) is standard: the dual space $C(K)^*=\ell_1(K)$ has the Schur property, whence $\mathscr{K}(\ell_1(K))=\mathscr{W}(\ell_1(K))$. The claim follows from Gantmacher's theorem and Schauder's theorem.

To prove part (ii) let us notice that the space $C(K)$, being an $\mathscr{L}_\infty$-space, has the bounded approximation property, hence the ideal of compact operators $\mathscr{K}(C(K))$ is the smallest closed non-trivial ideal in $\mathscr{B}(C(K))$. It remains to show that if $T\in \mathscr{B}(C(K))\setminus \mathscr{W}(C(K))$ then the ideal generated by $T$ contains the ideal $\mathscr{G}_{c_0}(C(K))$. By a result of Pe\l czy\'{n}ski (cf. \cite{pelczynski} or \cite[Theorem VI.2.15]{diestel_uhl}), there is a subspace $Y$ of $C(K)$ isomorphic to $c_0$ such that $T|_Y$ is bounded below. Hence $T(Y)$ is closed, so by Theorem \ref{lotz}, it contains a completemented copy of $c_0$, say $Z$. Note that $T|_X\colon X\to Z$ is an isomorphism which factors the identity operator on $c_0$. Consequently, $\mathscr{G}_{c_0}(C(K))$ is contained in the ideal generated by $T$.\end{proof}

\begin{theorem}\label{mrowka}Let $K$ be the Mr\'{o}wka space constructed by Koszmider in \cite{Ko}. Then the the lattice of closed ideals in $\mathscr{B}(C(K))$ has the form:
\[\{0\}\subsetneq \mathscr{K}(C(K))\subsetneq \mathscr{X}(C(K))=\mathscr{G}_{c_0}(C(K))= \mathscr{W\!C\!G}(C(K))\subsetneq \mathscr{B}(C(K)).\]
\end{theorem}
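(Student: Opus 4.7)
The plan is to assemble Remark~\ref{mrowkaremark}, the three clauses of Theorem~\ref{Koszmider}, and Proposition~\ref{scatt}(ii); the only new identification I need to carry out is $\mathscr{G}_{c_0}(C(K))=\mathscr{X}(C(K))$, after which every remaining gap in the chain is ruled out by a result already in hand.

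First I would handle the top of the lattice. Remark~\ref{mrowkaremark} already gives $\mathscr{W\!C\!G}(C(K))=\mathscr{X}(C(K))$, and Theorem~\ref{Koszmider}(i) tells us this ideal has codimension one in $\mathscr{B}(C(K))$; consequently it is proper, maximal, and no closed ideal lies strictly between it and $\mathscr{B}(C(K))$. This settles the rightmost inclusion and its maximality at one stroke.

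Next I would establish the equality $\mathscr{G}_{c_0}(C(K))=\mathscr{X}(C(K))$. The inclusion $\subseteq$ is automatic since $c_0$ is separable. For the reverse, given $T\in\mathscr{X}(C(K))$, the range $\overline{T(C(K))}$ is a separable subspace of $C(K)$, so by Theorem~\ref{Koszmider}(ii) it sits inside some closed subspace $Y\subseteq C(K)$ with $Y\simeq c_0$; viewing $T$ as a map into $Y$ and composing with the isomorphism $c_0\to Y$ and the inclusion $Y\hookrightarrow C(K)$ exhibits $T$ as a factorisation through $c_0$, so $T\in\mathscr{G}_{c_0}(C(K))$.

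Finally, Proposition~\ref{scatt}(ii) already forbids any closed ideal strictly between $\{0\}$ and $\mathscr{K}(C(K))$, or strictly between $\mathscr{K}(C(K))$ and $\mathscr{G}_{c_0}(C(K))$; only the strict inclusions themselves remain to be checked. The inclusion $\{0\}\subsetneq\mathscr{K}(C(K))$ is trivial, while $\mathscr{K}(C(K))\subsetneq\mathscr{G}_{c_0}(C(K))$ follows because Theorem~\ref{Koszmider}(iii) yields a complemented copy of $c_0$ in $C(K)$, whose associated projection factors through $c_0$ but is non-compact. I do not expect any real obstacle: the only step that is more than bookkeeping is the factorisation argument in the second paragraph, where one has to upgrade the subspace containment supplied by Theorem~\ref{Koszmider}(ii) into an honest factorisation through $c_0$; everything else is a direct quotation of the cited results.
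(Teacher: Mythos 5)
Your core identification $\mathscr{X}(C(K))=\mathscr{G}_{c_0}(C(K))$ is exactly the paper's argument: a separable-range operator has its range inside a copy of $c_0$ by Theorem~\ref{Koszmider}(ii), hence factors through $c_0$, and the top of the lattice is handled by codimension one. However, as written your proof does not establish the full statement, which asserts that the displayed ideals are \emph{all} the closed ideals of $\mathscr{B}(C(K))$. Knowing that the chain is strict and that no closed ideal lies \emph{strictly between consecutive members} of it (the literal content of Proposition~\ref{scatt}(\ref{scattii}) plus maximality of $\mathscr{X}(C(K))$) does not by itself exclude a closed ideal \emph{incomparable} with the chain --- for instance a second maximal ideal, or an ideal containing $\mathscr{K}(C(K))$ but incomparable with $\mathscr{G}_{c_0}(C(K))$; a hyperplane ideal need not be the unique maximal one in general. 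What closes this gap is the stronger fact contained in the \emph{proof} of Proposition~\ref{scatt}(ii): every non-zero closed ideal contains $\mathscr{K}(C(K))$ (via finite-rank operators and the bounded approximation property), and the ideal generated by any non-weakly-compact operator contains $\mathscr{G}_{c_0}(C(K))$ (Pe{\l}czy\'nski plus Theorem~\ref{lotz}). Since $\mathscr{K}(C(K))=\mathscr{W}(C(K))$ here, every closed ideal is therefore comparable with the chain, and the paper makes precisely this point by observing that every maximal ideal must contain $\mathscr{G}_{c_0}(C(K))$ and hence equal it. You need to add this step (or quote the argument of Proposition~\ref{scatt} rather than only its statement).

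A minor further point: Theorem~\ref{Koszmider}(iii) is a conditional statement about decompositions and does not by itself provide a complemented copy of $c_0$ in $C(K)$; for the strictness $\mathscr{K}(C(K))\subsetneq\mathscr{G}_{c_0}(C(K))$ you should instead invoke Theorem~\ref{lotz} (Lotz--Peck--Porta), which, $K$ being scattered, yields a complemented copy of $c_0$ whose associated projection is a non-compact operator factoring through $c_0$. With these two repairs your argument coincides with the paper's proof.
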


\begin{proof}[Proof of Theorem \ref{mrowka}]Since the space $K$ is scattered, no closed ideal lies neither between $\{0\}$ and $\mathscr{K}(C(K))$ nor $\mathscr{K}(C(K))$ and $\mathscr{G}_{c_0}(C(K))$ (Proposition \ref{scatt}(\ref{scattii})). By Remark \ref{mrowkaremark}, we have $\mathscr{W\!C\!G}(C(K))=\mathscr{X}(C(K))$; let $T\in \mathscr{X}(C(K))$. By Theorem \ref{Koszmider}(b), there is a subspace $Y$ isomorphic to $c_0$ such that $T(C(K))\subseteq Y$, so $T$ factors through $c_0$. Thus, the ideals $\mathscr{G}_{c_0}(C(K))$ and $\mathscr{X}(C(K))$ are equal by virtue of the maximality of the latter one. Now, if $\mathscr{J}$ is any maximal ideal in $\mathscr{B}(C(K))$, by Proposition \ref{scatt}(\ref{scattii}), it must contain $\mathscr{G}_{c_0}(C(K))$, hence $\mathscr{J}=\mathscr{G}_{c_0}(C(K))$, thus there are no other closed ideals in $\mathscr{B}(C(K))$ than those listed in the claim.\end{proof}
\bibliographystyle{amsplain}

\end{document}